\newtheorem{theorem}{Theorem}[section]
\newtheorem{lemma}[theorem]{Lemma}
\newtheorem{proposition}[theorem]{Proposition}
\newtheorem{corollary}[theorem]{Corollary}
\theoremstyle{remark}
\newtheorem{remark}[theorem]{Remark}
\newtheorem*{remark*}{Remark}
\theoremstyle{definition}
\newtheorem{definition}[theorem]{Definition}
\newcommand{\R}{\mathbb{R}}
\newcommand{\Rd}{\mathbb{R}^3}
\newcommand{\C}{\mathbb{C}}
\newcommand{\N}{\mathbb{N}}
\newcommand{\Eps}{\mathcal{E}}
\newcommand{\G}{\mathcal{G}}
\renewcommand{\leq }{\leqslant}
\renewcommand{\geq}{\geqslant}
\newcommand{\la}{\lambda}
\newcommand{\al}{\alpha}
\newcommand{\ep}{\varepsilon}
\newcommand{\x}{\mathbf{x}}
\newcommand{\dt}{\,dt}
\newcommand{\dx}{\,dx}
\newcommand{\lap}{\Delta}
\newcommand{\na}{\nabla}
\newcommand{\f}[2]{\frac{#1}{#2}}
\newcommand{\deb}{\rightharpoonup}
\title[Robustness of NLSE ground states with respect to point defect] {Existence, Structure, and robustness of ground states of a NLSE in 3D with a point defect} 
\author[R. Adami]{Riccardo Adami}
\address{Politecnico di Torino, Dipartimento di Scienze Matematiche ``G.L. Lagrange'',Corso Duca degli Abruzzi, 24, 10129, Torino, Italy}
\email{riccardo.adami@polito.it}
\author[F. Boni]{Filippo Boni}
\address{Università degli Studi di Napoli Federico II,Dipartimento di Matematica e Applicazioni ``Renato Caccioppoli”, Via Cintia, Monte S. Angelo, 80126, Napoli, Italy}
\email{filippo.boni@polito.it}
\author[R. Carlone]{Raffaele Carlone}
\address{Università degli Studi di Napoli Federico II,Dipartimento di Matematica e Applicazioni ``Renato Caccioppoli”, Via Cintia, Monte S. Angelo, 80126, Napoli, Italy}
\email{raffaele.carlone@unina.it}
\author[L. Tentarelli]{Lorenzo Tentarelli}
\address{Politecnico di Torino, Dipartimento di Scienze Matematiche ``G.L. Lagrange'',Corso Duca degli Abruzzi, 24, 10129, Torino, Italy}
\email{lorenzo.tentarelli@polito.it}
\date{\today}
\begin{document}

%%%%%%%%%%%%%%%%%%%%%%%%%%%%%%%%%%
%%%%%%%%%% Intestazione %%%%%%%%%%
%%%%%%%%%%%%%%%%%%%%%%%%%%%%%%%%%%

\begin{abstract}
We study the ground states for the Schr\"odinger equation with a focusing nonlinearity and a point interaction in dimension three. We establish that ground states exist for every value of the mass; moreover they are positive, radially symmetric, decreasing along the radial direction, and show a Coulombian singularity at the location of the point interaction.
Remarkably, the existence of the ground states is independent of the attractive or repulsive character of the point interaction.
\end{abstract}

\maketitle

\vspace{-.5cm}

\noindent {\footnotesize \textul{AMS Subject Classification:} 35Q40, 35Q55, 35B07, 35B09, 35R99, 49J40, 49N15.}

\noindent {\footnotesize \textul{Keywords:} standing waves, nonlinear Schr\"odinger, ground states, delta interaction, radially symmetric solutions, rearrangements.}

%%%%%%%%%%%%%%%%%%%%%%%%%%%%%%%%%%%%%%%%%%%%%%%%%%%%%%%%%%%%%%%%%%%%%%%%%%%%%%%%
%%%%%%%%%%%%%%%%%%%%%%%%%%%%%%%%%%%%%%%%%%%%%%%%%%%%%%%%%%%%%%%%%%%%%%%%%%%%%%%%
%%%%%%%%%%%%%%%%%%%%%%%%%%%%%%%%%%%%%%%%%%%%%%%%%%%%%%%%%%%%%%%%%%%%%%%%%%%%%%%%
%%%%%%%%%%%%%%%%%%%%%%%%%%%%%%%%%%%%%%%%%%%%%%%%%%%%%%%%%%%%%%%%%%%%%%%%%%%%%%%%
%%%%%%%%%%%%%%%%%%%%%%%%%%%%%%%%%%%%%%%%%%%%%%%%%%%%%%%%%%%%%%%%%%%%%%%%%%%%%%%%
%%%%%%%%%%%%%%%%%%%%%%%%%%%%%%%%%%%%%%%%%%%%%%%%%%%%%%%%%%%%%%%%%%%%%%%%%%%%%%%%
%%%%%%%%%%%%%%%%%%%%%%%%%%%%%%%%%%%%%%%%%%%%%%%%%%%%%%%%%%%%%%%%%%%%%%%%%%%%%%%%
%%%%%%%%%%%%%%%%%%%%%%%%%%%%%%%%%%%%%%%%%%%%%%%%%%%%%%%%%%%%%%%%%%%%%%%%%%%%%%%%
%%%%%%%%%%%%%%%%%%%%%%%%%%%%%%%%%%%%%%%%%%%%%%%%%%%%%%%%%%%%%%%%%%%%%%%%%%%%%%%%
%%%%%%%%%%%%%%%%%%%%%%%%%%%%%%%%%%%%%%%%%%%%%%%%%%%%%%%%%%%%%%%%%%%%%%%%%%%%%%%%
%%%%%%%%%%%%%%%%%%%%%%%%%%%%%%%%%%%%%%%%%%%%%%%%%%%%%%%%%%%%%%%%%%%%%%%%%%%%%%%%
%%%%%%%%%%%%%%%%%%%%%%%%%%%%%%%%%%%%%%%%%%%%%%%%%%%%%%%%%%%%%%%%%%%%%%%%%%%%%%%%

%%%%%%%%%%%%%%%%%%%%%%%%%%%%%%%%%%
%%%%%%%%%% Introduzione %%%%%%%%%%
%%%%%%%%%%%%%%%%%%%%%%%%%%%%%%%%%%

\section{Introduction}

\noindent
%In the Theory of Bose-Einstein Condensates (BEC), the perturbation 
The standard Nonlinear Schr\"odinger Equation (NLSE) perturbed by a point interaction
\begin{equation}
 \label{eq-tNLS_form}
 i\f{\partial\psi}{\partial t}=(-\Delta+\alpha\delta_0)\psi \pm |\psi|^{p-2}\psi,\qquad\alpha\neq 0,\quad p>2,
\end{equation}
has been recently proposed as an effective model for a Bose-Einstein Condensate (BEC) in the presence of defects or impurities (see e.g. \cite{CC-94,SM-20,SCMS-20}).

While in dimension one the delta interaction is a bounded perturbation of the Laplacian in the sense of the quadratic forms and the corresponding solutions are widely studied (see e.g. \cite{ABD-20,ANR-20,AN-09, AN-CMP13,ANV-DCDS13,BD-21,LFFKS-08}), the analogous problem in higher dimension has been addressed only recently. In particular, well-posedness has been studied in dimensions two and three (\cite{CFN-21}), whereas properties of the standing waves have been investigated  in dimension two (\cite{ABCT-2d,georgiev}) only.

Here we extend the results obtained in \cite{ABCT-2d} to the three-dimensional setting. In particular, we establish the existence and some qualitative properties of the ground states, and show that such features are insensitive of the sign of the parameter $\alpha$. This is in contrast with the case of a particle subject to a point interaction in the absence of a nonlinearity, for which ground states exist only for negative $\alpha$.
This contrast persists even if the point interaction itself bears a nonlinearity, namely it is of the form $\alpha | \psi |^{p-2} \delta_0$, with $2 < p < 4$
(see \cite{ACCT-20,ACCT-21,ADFT-ANIHPC03,ADFT-ANIHPC04,AFH-21,ANO-JMP13,ANO-DCDS16,AT-JFA01,CCT-ANIHPC19,CFT-Non19,HL-20,HL-21}).
%%%%%%%%%%%%%%%%%%%%%%%%%%%%%%%%%%%%%%%%%%%%%%%%%%%%%%%%%%%%%%%%%%%%%%%%%%%%%%%%
%%%%%%%%%%%%%%%%%%%%%%%%%%%%%%%%%%%%%%%%%%%%%%%%%%%%%%%%%%%%%%%%%%%%%%%%%%%%%%%%
%%%%%%%%%%%%%%%%%%%%%%%%%%%%%%%%%%%%%%%%%%%%%%%%%%%%%%%%%%%%%%%%%%%%%%%%%%%%%%%%
%%%%%%%%%%%%%%%%%%%%%%%%%%%%%%%%%%%%%%%%%%%%%%%%%%%%%%%%%%%%%%%%%%%%%%%%%%%%%%%%
%%%%%%%%%%%%%%%%%%%%%%%%%%%%%%%%%%%%%%%%%%%%%%%%%%%%%%%%%%%%%%%%%%%%%%%%%%%%%%%%
%%%%%%%%%%%%%%%%%%%%%%%%%%%%%%%%%%%%%%%%%%%%%%%%%%%%%%%%%%%%%%%%%%%%%%%%%%%%%%%%

\subsection{Setting and main results}
\label{subsec-main}

Here we treat equation \eqref{eq-tNLS_form} in $\R^3$ in the \emph{focusing} case. Like in dimension two (\cite{ABCT-2d}), in three dimensions equation \eqref{eq-tNLS_form} is formal since the delta interaction is not a bounded perturbation of the Laplacian. The operator $-\Delta+\alpha\delta_0$ is then constructed through the theory of self-adjoint extensions of hermitian operators, which guarantees (see e.g. \cite{AGHKH-88}) the existence of a family $(H_\alpha)_{\alpha\in\R}$ of self-adjoint operators that realize all point perturbations of $-\Delta$. As a result, the domain of $H_\alpha$ is
\begin{equation*} \begin{split}
D (H_\alpha) \ = & \ \{ v \in L^2 (\Rd) : \exists q \in \C \ {\rm s.t.} \\ & \ v - \frac q {4 \pi |x|} = \phi \in \dot H^2 (\Rd) , \
 \nabla \phi \in H^1 (\Rd) \ {\rm and} \ \phi (0) = \alpha q \}
\end{split}
\end{equation*}
and its action reads
\begin{equation*}
H_\alpha v = - \Delta \phi.
\end{equation*}
The complex number $q$ is called the {\em charge} of the function $v$, and represents the size of the Coulombian singularity at the origin.

It proves convenient to represent the domain of $H_\alpha$ in an alternative way. One chooses an arbitrary positive number $\lambda$ and, denoted by $\G_\la$ the Green's function of $-\lap+\la$, namely
\begin{equation}
 \label{eq-green}
 \G_\la(\x):=\f{e^{-\sqrt{\la}|x|}}{4\pi|x|},
\end{equation}
one gets
\begin{multline}
\label{domop}
D(H_{\alpha}):=
\bigg\{v\in L^{2}(\Rd):\exists q\in\C \, :\text{ s.t. }\: \\ v-q\G_\la=:\phi_{\la}\in H^{2}(\Rd)\:\text{ and }\: \phi_{\la}(0)=\bigg(\alpha+\f{\sqrt{\la}}{4\pi}\bigg)q\bigg\},
\end{multline}
and
\begin{equation}
\label{eq-actH}
H_{\alpha}v:=-\lap\phi_{\la}-q\la\G_\la.
\end{equation}

Note that $\G_{\la}$ is not in $H^1 (\Rd)$ and belongs to $L^{p}(\Rd)$ if and only if $1\leq p<3$. This entails  $D(H_{\alpha})\subset L^{p}(\Rd)$ only if $2\leq p<3$, which is one of the major differences with the two-dimensional case where the embedding holds for $p > 2$.

In addition, one can see that functions in $D(H_{\alpha})$ consist of a \emph{regular part} $\phi_\la$, on which the operator $H_\alpha$ acts as the standard Laplacian, and a \emph{singular part} $q\G_\la$, on which the operator acts as the multiplication by $-\lambda$. The two components are connected by the {boundary condition} $\phi_{\la}(0)=\left(\alpha+\f{\sqrt{\la}}{4\pi}\right)q$. 
We stress that $\la$ is a dumb parameter that does not affect the definition of $H_\alpha$, since for every $\lambda > 0$ any function in $D(H_{\alpha})$ can be equivalently decomposed in regular and singular part.

The quadratic form associated with $H_\alpha$ has domain
\begin{equation}
\label{dom}
D:=\big\{v\in L^{2}(\Rd):\exists q\in\C,\,\lambda>0\:\text{ s.t. }\:v-q\G_\la=:\phi_{\la}\in H^{1}(\Rd)\big\},
\end{equation}
and action
\begin{equation}
\label{eq-Q}
Q(v):=\left\langle H_{\alpha}v,v\right\rangle=\|\na\phi_\la\|^2_{2}+\la\big(\|\phi_\la\|^2_{2}-\|v\|^2_{2}\big)+\bigg(\alpha+\f{\sqrt{\la}}{4\pi}\bigg)|q|^2,\qquad\forall v\in D,
\end{equation}
where we denoted by $\left\langle \cdot,\cdot\right\rangle$ the hermitian product in $L^2(\Rd)$ and by $\|\cdot\|_p$ the usual norm in  $L^p(\Rd)$. The value of $Q (v)$ is independent of the choice of $\lambda$. 
Notice that in the form domain no boundary condition is prescribed.

Finally, we denote by $-\omega_{\alpha}$ the bottom of the spectrum of $H_{\alpha}$, so
\begin{equation}
\label{bott-spect}
- \omega_{\alpha}:= \inf_{v\in D\setminus\{0\}}\f{Q(v)}{\|v\|_{2}^{2}}
%\qquad\text{then}
%\qquad
%\omega_{\alpha}
\ = \ \left\{
\begin{array}{ll}
 -(4\pi\alpha)^{2}, & \text{if }\alpha<0,\\
 0,                 & \text{if }\alpha\geq0.
\end{array}
\right.
\end{equation}
Therefore the continuous spectrum of $H_\alpha$ is  $[0,+\infty)$, while the point spectrum of $H_{\alpha}$ is empty if $\alpha\geq 0$ and has the sole negative eigenvalue $-(4\pi\alpha)^{2}$ if $\alpha<0$.

We are ready to introduce the rigorous version of equation \eqref{eq-tNLS_form}, namely
\begin{equation}
\label{tdNLS}
i\f{\partial \psi}{\partial t}=H_\alpha \psi-|\psi|^{p-2}\psi,\qquad\alpha\in\R,\quad 2<p<3.
\end{equation}
Through the paper we shall refer to such equation as to
$\delta$-NLSE. It is well-known (\cite{CFN-21}) that its flow shows two conservation laws: mass, i.e. $L^2$-norm, and the
{\em energy}
\begin{equation}
\label{E}
E(v):=  \f{1}{2}Q(v)-\f{1}{p}\|v\|_p^p,
\end{equation}
defined for any $v$ in the form domain $D$.

%%% QUI CAMBIA LA LOGICA DELL'INTRODUZIONE

Hereafter we  focus on the ground states of equation \eqref{tdNLS}, according to the following definition.

\begin{definition}

Given $\alpha\in \R$, $2<p<3$, $\mu>0$, we call {\em ground state at mass $\mu$} every global minimizer of the {$\delta$-NLS energy} functional defined in \eqref{E},
among the functions belonging to
\begin{equation*}
D_{\mu}:=\{v\in D:\|v\|_{2}^{2}=\mu\}.
\end{equation*}
\end{definition}
The main result of the paper is the following

\begin{theorem}[$\delta$-NLS ground states]
\label{exchar-gs}
Let $p\in(2,3)$ and $\alpha\in \R$. Therefore, for every $\mu>0$,
\begin{itemize}
 \item[(i)] there exists a ground state for the $\delta$-NLS at mass $\mu$;
 \item[(ii)] if $u=\phi_\la+q\G_\la$ is a  ground state, then:
 \begin{itemize}
  \item[(a)] there does not exist $\lambda>0$ such that $\phi_\la$ or $q\G_\la$ are identically zero;
  \item[(b)] $u$ is positive, radially symmetric, and decreasing along the radial direction, up to multiplication by a constant phase; moreover, $\phi_\la$ is nonnegative if $\lambda=\omega :=\mu^{-1}(\|u\|_p^p-Q(u)) $, and positive if $\lambda>\omega$.
 \end{itemize}
 \end{itemize}
\end{theorem}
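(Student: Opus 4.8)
I would prove (i) by the direct method of the calculus of variations and then read off (ii) from the resulting Euler--Lagrange equation together with symmetric-decreasing rearrangement. The first task is coercivity of $E$ on $D_\mu$. Fixing $\la$ so large that $\al+\f{\sqrt\la}{4\pi}>0$ and writing $Q_\la(v):=Q(v)+\la\mu=\norm{\na\phi_\la}_2^2+\la\norm{\phi_\la}_2^2+(\al+\f{\sqrt\la}{4\pi})\abs{q}^2\geq0$ on $D_\mu$, I would establish a Gagliardo--Nirenberg--type bound $\norm{v}_p^p\leq C_\mu\,Q_\la(v)^{\beta}$ with $\beta=\f{3(p-2)}{4}<1$; here the hypothesis $p<3$ (a fortiori $p<\tf{10}{3}$) is exactly mass-subcriticality. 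A concentration computation—in which the Coulombian singularity forces $\norm{\na\phi_\la}_2^2\sim\abs{q}^4/\mu$ against $\norm{v}_p^p\sim\abs{q}^{3p-6}$—gives $E(v)\to+\infty$ as $Q_\la(v)\to\infty$, so $E$ is bounded below, the infimum $E_\mu:=\inf_{D_\mu}E$ is finite, and every minimizing sequence keeps $\phi_\la$ bounded in $H^1(\Rd)$ and $q$ bounded in $\C$. I would then extract $\phi_\la^{(n)}\deb\phi_\la$ and $q_n\to q$, set $u:=\phi_\la+q\G_\la$, and recover strong convergence by excluding vanishing (via $E_\mu<0$ on a concentrating test function) and dichotomy (via the strict binding inequality below).

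The decisive point—responsible both for compactness and for $q\neq0$—is the strict inequality $E_\mu<E_\mu^0$, where $E_\mu^0$ denotes the free focusing-NLSE ground-state level at mass $\mu$. Let $\phi_0$ be the free soliton, of frequency $\om_0$, and set $w_\ep:=\phi_0+\ep\G_\la$, renormalized to mass $\mu$. A first-order expansion gives $E(w_\ep)=E_\mu^0+\big[\scal{\phi_0}{\G_\la}(\om_0-\la)-\scal{\phi_0^{p-1}}{\G_\la}\big]\ep+o(\ep)$, and choosing $\la\geq\om_0$ makes the bracket strictly negative, so $E_\mu<E_\mu^0$. Crucially $\al$ enters only at order $\ep^2$, through $(\al+\f{\sqrt\la}{4\pi})\ep^2$, which is exactly why the conclusion is insensitive to the sign of $\al$. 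This forces $q\neq0$ (otherwise $u\in H^1(\Rd)$ and $E(u)=E^0(u)\geq E_\mu^0>E_\mu$) and feeds the strict binding inequality excluding dichotomy. That no $\la$ makes $\phi_\la\equiv0$ follows from the Euler--Lagrange equation: $\phi_{\la_0}\equiv0$ would make $u=q\G_{\la_0}$ a linear eigenfunction $H_\al u=-\la_0 u$, incompatible with $(\om-\la_0)u=\abs{u}^{p-2}u$ since $\G_{\la_0}^{p-2}$ is non-constant.

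For part (ii) the Euler--Lagrange equation reads $H_\al u+\om u=\abs{u}^{p-2}u$ with $\om=\mu^{-1}(\norm{u}_p^p-Q(u))>0$, reducing at $\la=\om$ to $(-\lap+\om)\phi_\om=\abs{u}^{p-2}u$. For symmetry and sign I would pass to the symmetric-decreasing rearrangement $u^*$, which preserves $\norm{\cdot}_2$ and $\norm{\cdot}_p$ and—because $u$ carries a $\tf{q}{4\pi\abs{x}}$ singularity—preserves the charge modulus $\abs{q}$. Granting $Q(u^*)\leq Q(u)$, the rearrangement $u^*$ is again a minimizer; the equality case of P\'olya--Szeg\H{o} then forces $u$ to be radially decreasing (the point interaction pins the centre at the origin), while admissibility of $\abs{u}$ gives positivity up to a constant phase through the strong maximum principle. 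Finally, positivity of the kernel $\G_\om>0$ gives $\phi_\om=(-\lap+\om)^{-1}(\abs{u}^{p-2}u)\geq0$, and for $\la>\om$ one has $\phi_\la=\phi_\om+q(\G_\om-\G_\la)$ with $q>0$ and $\G_\om>\G_\la>0$ (since $\sqrt\om<\sqrt\la$), whence $\phi_\la>0$.

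The main obstacle is the inequality $Q(u^*)\leq Q(u)$. Since $v\notin H^1(\Rd)$, the kinetic term $\norm{\na\phi_\la}_2^2$ is finite only after subtracting the singular profile $q\G_\la$, and P\'olya--Szeg\H{o} cannot be applied termwise to $\phi_\la^*=u^*-\abs{q}\G_\la$, which is \emph{not} the rearrangement of $\phi_\la=u-q\G_\la$. I expect to bypass this through a renormalized kinetic energy $\lim_{\ep\to0}\big(\norm{\na u}_{L^2(\abs{x}>\ep)}^2-\tf{\abs{q}^2}{4\pi\ep}\big)$, whose divergent Coulomb self-energy depends only on $\abs{q}$ and is therefore shared by $u$ and $u^*$, the finite remainder obeying P\'olya--Szeg\H{o}. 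The secondary difficulty, the strict binding inequality, I would reduce to $E_\mu<E_\mu^0$ and the monotonicity of $m\mapsto E_m$ by relocating the escaping free bubble to the origin.
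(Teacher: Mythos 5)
Your overall architecture for part (i) matches the paper's (direct method, strict inequality $\Eps(\mu)<\Eps^0(\mu)$, Brezis--Lieb splitting), and your perturbative computation showing the first variation along $\G_\la$ is $-\phi_0(0)\ep+o(\ep)$ is a legitimate alternative to the paper's argument (which instead observes that the soliton cannot satisfy the boundary condition $\phi_\la(0)=(\alpha+\tf{\sqrt{\la}}{4\pi})q$). However, your coercivity step contains a genuine error: with a \emph{fixed} $\la$, the inequality $\norm{v}_p^p\leq C_\mu\,Q_\la(v)^{3(p-2)/4}$ on $D_\mu$ is false, because the mass constraint controls neither $\norm{\phi_\la}_2$ nor $\abs{q}$ separately (e.g.\ a pure concentrating singular bubble has $\norm{v}_p^p\sim\abs{q}^p\sim Q_\la(v)^{p/4}$, and $p/4>3(p-2)/4$ for $p<3$). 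Your own heuristic $\norm{\na\phi}_2^2\sim\abs{q}^4/\mu$ is the right scaling, but to turn it into an inequality one must let the splitting parameter depend on $q$: the paper decomposes $u=\phi+q\G_{\la}$ with $\la=\ep\abs{q}^4/\norm{u}_2^4$, which makes $\norm{q\G_\la}_2$ uniformly bounded, restores the standard Gagliardo--Nirenberg factor $\norm{v}_2^{(6-p)/2}=\mu^{(6-p)/4}$, and produces the coercive $+c\abs{q}^4/\mu$ term in the energy against $\abs{q}^{3(p-2)}$ with $3(p-2)<4$. Without this (or an equivalent device) the boundedness from below of $E$ on $D_\mu$ is not established.

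The more serious gap is in part (ii). Your plan rests on $Q(u^*)\leq Q(u)$ for the full symmetric-decreasing rearrangement $u^*$, which you yourself flag as the main obstacle and propose to ``bypass'' via a renormalized P\'olya--Szeg\H{o} inequality for $\lim_{\ep\to0}\bigl(\norm{\na u}^2_{L^2(\abs{x}>\ep)}-\tf{\abs{q}^2}{4\pi\ep}\bigr)$. This is not a proof: it is not even clear that $u^*$ belongs to $D$ with the same charge (matching the leading $\abs{q}/(4\pi\abs{x})$ singularity does not imply $u^*-\abs{q}\G_\la\in H^1(\Rd)$), and no rearrangement inequality for the renormalized functional is established. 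The paper avoids this entirely by never rearranging $u$: it passes to the equivalent problem of minimizing $Q_\omega$ at fixed $L^p$-norm (Proposition \ref{equivprob3}), fixes $\la=\omega$ so that the $\norm{v}_2^2$ term in $Q_\omega$ cancels, and uses the competitor $\widetilde u=\phi_\omega^*+q\G_\omega$: ordinary P\'olya--Szeg\H{o} applies to $\phi_\omega\in H^1(\Rd)$ alone, while the two-function inequality $\int\abs{f+g}^p\leq\int\abs{f^*+g^*}^p$ (with $f=q\G_\omega$ already radial decreasing) shows $\norm{\widetilde u}_p\geq\norm{u}_p$, so a strict inequality can be scaled away by $\beta<1$. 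The same scheme, with $\widetilde u=\abs{\phi_\omega}+q\G_\omega$, yields positivity variationally, rather than through a maximum principle for $H_\alpha$ (which you invoke but which is not available off the shelf for a point interaction). Unless you can actually prove your renormalized P\'olya--Szeg\H{o} inequality together with charge preservation under rearrangement, the symmetry and positivity claims remain unproven in your approach; the change of constraint from $\norm{\cdot}_2$ to $\norm{\cdot}_p$ is the missing idea.
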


\noindent
Ground states are particular cases of {\em bound states}, i.e. functions that satisfy
\begin{gather}
 \label{eq-regbound} u\in D(H_\alpha),\\
 \label{EL3} H_{\alpha}u+\omega u-|u|^{p-2}u=0.
\end{gather}
Bound states are the spatial profiles of {\em standing waves},
in the sense that $u$ is a bound state if and only if
$\psi(t,\x)=e^{i\omega t}u(\x)$, is a solution to equation
\eqref{tdNLS}.

In order to prove the qualitative features $(iia)$ and $(iib)$ 
in Theorem \ref{exchar-gs}, we will study a class of bound states larger than that of ground states, namely the set of the minimizers of the action functional $S_\omega$ defined as
 \begin{equation}
\label{SwE}
S_{\omega}: D\to \R\qquad\text{such that}\qquad S_\omega(v):=E(v)+\f{\omega}{2}\|v\|_2^2,
\end{equation} 
among the functions belonging to the Nehari's manifold
\begin{equation}
\label{eq-nehari}
N_{\omega}:=\{v\in D\setminus\{0\}:I_{\omega}(v)=0\},
\end{equation}
where $I_\omega:D\to\R$ is given by
\begin{equation}
\label{eq-Inehari}
I_{\omega}(v):=\langle S_{\omega}'(v),v\rangle=\|\na\phi_\la\|^2_{2}+\la\|\phi_\la\|^2_{2}+(\omega-\la)\|v\|^2_{2}+\bigg(\alpha+\f{\sqrt{\la}}{4\pi}\bigg)|q|^2-\|v\|^{p}_{p}.
\end{equation}

%%%% REMARK SULLA POSITIVITA' 

%\begin{remark}
 %It is well known (see \cite[Remark 1.5]{ABCT-2d}) that, if $u$ is a ground state for the $\delta$-NLSE, then the associated frequency $\omega=\mu^{-1}(\|u\|_p^p-Q(u))$ is positive.
%\end{remark}

The result on the minimizers of the action functional reads as follows.

\begin{theorem}[$\delta$-NLS action minimizers]
\label{exchar-actmin}
Let $p\in(2,3)$, $\alpha\in \R$ %and $\overline{\omega}_{\alpha}:=-\omega_{\alpha}$, with $\omega_{\alpha}$ defined by \eqref{bott-spect}. 
Then, 
\begin{itemize}
\item[(i)] a minimizer of the action of the $\delta$-NLS at frequency $\omega$ does exist if and only if $\omega> {\omega}_{\alpha}$ (defined in \eqref{bott-spect});

\item[(ii)] if $u=\phi_\la+q\G_\la$ is a minimizer of the action of the $\delta$-NLS at frequency $\omega>{\omega}_{\alpha}$, then:
\begin{itemize}
 \item[(a)] there does not exist $\lambda>0$ such that $\phi_\la$ or $q\G_\la$ are identically zero;
 \item[(b)] $u$ is positive, radially symmetric, and decreasing along the radial direction, up to multiplication by a constant phase factor; in particular, $\phi_\la$ is nonnegative when $\lambda=\omega$, and positive when $\lambda>\omega$.
\end{itemize}
\end{itemize}
\end{theorem}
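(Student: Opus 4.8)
The plan is to recast the constrained problem as a scale-invariant minimization and then run a concentration--compactness argument, isolating the two genuinely delicate points: a strict-inequality estimate that forces compactness (and, notably, survives for repulsive $\al$), and a rearrangement inequality adapted to the singular decomposition. Throughout I fix the dumb parameter $\la=\om$, so that the coercive form $\mathcal B_\om(v):=Q(v)+\om\|v\|_2^2=I_\om(v)+\|v\|_p^p$ collapses, with no cross term between regular and singular parts, to
\begin{equation*}
\mathcal B_\om(v)=\|\na\phi_\om\|_2^2+\om\|\phi_\om\|_2^2+\Big(\al+\tf{\sqrt\om}{4\pi}\Big)|q|^2 .
\end{equation*}
On $N_\om$ one has $\mathcal B_\om(v)=\|v\|_p^p$ and $S_\om(v)=\tf{p-2}{2p}\|v\|_p^p$, and every $v$ with $\mathcal B_\om(v)>0$ can be rescaled onto $N_\om$. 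Hence minimizing $S_\om$ over $N_\om$ is equivalent to minimizing the scale-invariant quotient $\mathcal J(v):=\mathcal B_\om(v)/\|v\|_p^2$, with $\inf_{N_\om}S_\om=\tf{p-2}{2p}(\inf\mathcal J)^{p/(p-2)}$. The spectral bound \eqref{bott-spect} shows that $\mathcal B_\om$ is coercive on $D$ precisely when $\om>\om_\al$, which already delivers the threshold in $(i)$: for $\om>\om_\al$, coercivity plus a Gagliardo--Nirenberg/Sobolev bound forces $\inf\mathcal J>0$; for $\om\le\om_\al$ one drives $\mathcal B_\om(v)$ to $0$ along admissible directions (the linear eigenfunction $\G_{(4\pi\al)^2}$, on which $\mathcal B_{\om_\al}$ vanishes when $\al<0$, and low-frequency spreading when $\al\ge0$) keeping $\|v\|_p$ bounded below, so $\inf\mathcal J=0$; since $S_\om>0$ strictly on $N_\om$, the infimum is not attained.

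For the existence half of $(i)$ I would take a minimizing sequence for $\mathcal J$, bounded in the form domain by coercivity and the constraint. The only loss of compactness is escape to infinity: since the point interaction is a local perturbation, any mass leaking to spatial infinity solves in the limit the \emph{free} focusing NLSE, contributing the level $\mathcal J_\infty:=\inf\{\mathcal J(v):q=0\}$, attained by the free soliton $\phi_0$ (for which $\mathcal B_\om$ reduces to $\|\na\cdot\|_2^2+\om\|\cdot\|_2^2$). Trivially $\inf_D\mathcal J\le\mathcal J_\infty$, and the heart of the matter is the \emph{strict} inequality $\inf_D\mathcal J<\mathcal J_\infty$. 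I would prove it with the competitor $v_\ep:=\phi_0+\ep\,\G_\om\in D$, which has charge $\ep$ and regular part $\phi_0$: then $\mathcal B_\om(v_\ep)=\mathcal B_\om(\phi_0)+(\al+\tf{\sqrt\om}{4\pi})\ep^2$ is stationary to first order, while, because $\G_\om\in L^p(\Rd)$ for $p<3$, one has $\|v_\ep\|_p^p=\|\phi_0\|_p^p+p\,\ep\!\int_{\Rd}\phi_0^{\,p-1}\G_\om\dx+o(\ep)$ with strictly positive linear term. Thus $\mathcal J(v_\ep)<\mathcal J(\phi_0)=\mathcal J_\infty$ for small $\ep>0$, \emph{for every} $\al\in\R$: the linear $L^p$ gain always beats the quadratic cost $(\al+\tf{\sqrt\om}{4\pi})\ep^2$, which is exactly why existence is insensitive to the sign of $\al$. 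With the strict gap in hand, standard concentration--compactness excludes vanishing (via the constraint) and dichotomy/escape (via the gap); as there is no translation freedom, the minimizing sequence is then compact and converges to a minimizer. Making this escape analysis rigorous---identifying the limiting profile as the free soliton and quantifying the gap---is the main obstacle for $(i)$.

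Part $(iia)$ for the singular component is a byproduct of the same computation: $q=0$ realizes precisely the non-optimal value $\mathcal J_\infty$, so any minimizer has $q\ne0$, i.e. $q\G_\la\not\equiv0$. If instead $\phi_\la\equiv0$ for some $\la$, then $u=q\G_\la$ is a multiple of the linear ground state, which forces $\la=(4\pi\al)^2$ and $\al<0$; inserting $u=q\G_\la$ into \eqref{EL3} gives $(\om-\om_\al)u=|u|^{p-2}u$, i.e. $|u|^{p-2}$ constant on $\{u\ne0\}$, impossible since $\G_\la$ is non-constant. For $(iib)$ I would symmetrize in two steps. First a real/nonnegative reduction: for $v\in D$ with charge $q$, the modulus $|v|$ lies in $D$ with charge $|q|$ and satisfies $\mathcal B_\om(|v|)\le\mathcal B_\om(v)$ while $\|\,|v|\,\|_p=\|v\|_p$; as the penalty $(\al+\tf{\sqrt\om}{4\pi})|q|^2$ is phase-blind, this reduces to a diamagnetic-type inequality for the regular parts, whose equality case gives $u=e^{\i\th}|u|$. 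Second, the symmetric decreasing rearrangement $u^{*}$ centred at the defect preserves $\|u\|_2$, $\|u\|_p$ and the charge, and satisfies $\mathcal B_\om(u^{*})\le\mathcal B_\om(u)$ by P\'olya--Szeg\H{o} for the gradient and Hardy--Littlewood for the $\phi_\om$--$\G_\om$ cross term; its equality case yields $u=u^{*}$, hence radial and radially decreasing.

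Finally, the sign of the regular part follows from the Euler--Lagrange equation: with $u\ge0$ and $q>0$, at $\la=\om$ the equation \eqref{EL3} reads $-\lap\phi_\om+\om\phi_\om=u^{p-1}$, so $\phi_\om=\G_\om*u^{p-1}\ge0$ (indeed $>0$ unless $u\equiv0$); and for $\la>\om$ the identity $\phi_\la=\phi_\om+q(\G_\om-\G_\la)$ with $\G_\om>\G_\la$ and $q>0$ gives $\phi_\la>0$, which is the stated alternative. I expect the rearrangement estimate $\mathcal B_\om(u^{*})\le\mathcal B_\om(u)$ to be the other main obstacle, since the classical tools do not apply verbatim: one must control how the Coulombian charge and the boundary-free regular part transform under $|\cdot|$ and under $(\cdot)^{*}$, the delicate point being that the singular part $q\G_\om$ is not an $H^1$ object.
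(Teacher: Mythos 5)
Your treatment of part (i) and of (ii)(a) is essentially sound and runs parallel to the paper's: the scale-invariant quotient $Q_\omega(v)/\|v\|_p^2$ is precisely the paper's equivalent formulation (Lemma \ref{equivprob} and Proposition \ref{equivprob3}), the nonexistence for $\omega\le\omega_\alpha$ via singular trial functions $q\G_\la$ with vanishing admissible charge is Lemmas \ref{qlacond}--\ref{Nomega} and Proposition \ref{pro-dzero}, and your compactness scheme matches Steps 1--4 of the paper's proof of point (i). The one genuinely different ingredient is your proof of the strict gap $d(\omega)<d^0(\omega)$: you expand $\|\phi_0+\ep\G_\omega\|_p^p$ to first order in $\ep$ and beat the quadratic cost $(\alpha+\f{\sqrt{\omega}}{4\pi})\ep^2$, whereas the paper (Proposition \ref{comparinf}, via the argument of Proposition \ref{Eps<Eps0}) argues softly that the NLS minimizer cannot satisfy the boundary condition $\phi_\la(0)=(\alpha+\f{\sqrt{\la}}{4\pi})q$ with $q=0$. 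Both work; yours is more quantitative and makes the insensitivity to the sign of $\alpha$ explicit. Your argument for (ii)(a) coincides with Proposition \ref{GSnornos}.

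The gap is in (ii)(b). You rearrange the full function $u$ and invoke P\'olya--Szeg\H{o}, but $u\notin H^1(\Rd)$: the gradient term in $Q_\omega$ is $\|\na\phi_\omega\|_2^2$, the gradient of the \emph{regular part}, and \eqref{PS} gives no comparison between $\|\na(u^*-q\G_\omega)\|_2$ and $\|\na\phi_\omega\|_2$ (it is not even clear that $u^*-q\G_\omega\in H^1(\Rd)$). Moreover, at $\la=\omega$ the form $Q_\omega$ has no $\phi_\omega$--$\G_\omega$ cross term, so there is nothing for Hardy--Littlewood to act on; the analogous ``diamagnetic'' inequality for $|v|$ in the decomposed setting is likewise asserted but not provable by the classical tools, as you yourself suspect. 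The paper's resolution --- the key idea you are missing --- is to modify \emph{only the regular part}: compare $u=\phi_\omega+q\G_\omega$ with $\widetilde u:=|\phi_\omega|+q\G_\omega$ for positivity (Proposition \ref{realposgs}) and with $\phi_\omega^*+q\G_\omega$ for symmetry (Proposition \ref{minsymm}). Then $Q_\omega(\widetilde u)\le Q_\omega(u)$ is immediate, since P\'olya--Szeg\H{o} applies to $\phi_\omega\in H^1(\Rd)$ and the charge term is untouched, while the $L^p$ norm \emph{increases}, strictly unless $\phi_\omega$ is already nonnegative (resp.\ radial decreasing), by the pointwise estimate in the proof of Proposition \ref{realposgs} and by the rearrangement inequality \eqref{ineqf+g} together with its equality case; a rescaling $\beta\widetilde u$ with $\beta<1$ then restores the $L^p$ constraint of Proposition \ref{equivprob3} and strictly lowers $Q_\omega$, contradicting minimality. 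Your derivation of the positivity of $\phi_\omega$ from $-\lap\phi_\omega+\omega\phi_\omega=u^{p-1}$ is a reasonable alternative to the paper's variational argument for that final point, but it becomes available only after the positivity of $u$ has been secured by the above.
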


Through the following Lemma (whose proof can be found in  \cite[Appendix B]{ABCT-2d} and is an adaptation of what has been established in \cite{DST-21} for the standard NLSE) we can connect minimizers of the action with ground states.

\begin{lemma}
\label{chargs}
Let $p\in(2,3)$, $\alpha\in\R$ and $\mu>0$. If $u$ is a ground state of mass $\mu$, then it is also a minimizer of the action at the frequency $\omega=\mu^{-1}(\|u\|_p^p-Q(u))$.
\end{lemma}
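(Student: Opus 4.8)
The plan is to connect the two variational problems purely through the amplitude scaling $v\mapsto\tau v$, $\tau>0$, without invoking compactness or the Euler--Lagrange equation. First I would check that the statement is nonvacuous by verifying directly that $u\in N_\omega$: since $\|u\|_2^2=\mu$ and $\omega=\mu^{-1}(\|u\|_p^p-Q(u))$, one has $\omega\mu=\|u\|_p^p-Q(u)$, whence $I_\omega(u)=Q(u)+\omega\|u\|_2^2-\|u\|_p^p=0$; moreover $u\neq0$ because $\mu>0$. Thus $u\in N_\omega$ and it makes sense to ask whether it minimizes $S_\omega$ there.

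Next I would analyze $S_\omega$ along the ray $\tau\mapsto\tau v$ for a fixed but arbitrary $v\in N_\omega$. Because the quadratic form $Q$ is homogeneous of degree two under $v\mapsto\tau v$ (the decomposition $v=\phi_\la+q\G_\la$ rescales to $\tau v=\tau\phi_\la+(\tau q)\G_\la$, so charge and regular part both scale by $\tau$ and $Q(\tau v)=\tau^2 Q(v)$), while $\|\tau v\|_r^r=\tau^r\|v\|_r^r$, I get
\begin{equation*}
S_\omega(\tau v)=\f{\tau^2}{2}\big(Q(v)+\omega\|v\|_2^2\big)-\f{\tau^p}{p}\|v\|_p^p.
\end{equation*}
Since $v\in N_\omega$ gives $Q(v)+\omega\|v\|_2^2=\|v\|_p^p$, and $\|v\|_p^p>0$ because $v\neq0$, this reduces to $S_\omega(\tau v)=\|v\|_p^p\big(\tf{\tau^2}{2}-\tf{\tau^p}{p}\big)$. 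As $p>2$, the function $\tau\mapsto\tf{\tau^2}{2}-\tf{\tau^p}{p}$ attains its maximum on $(0,\infty)$ exactly at $\tau=1$, so $S_\omega(\tau v)\le S_\omega(v)$ for every $\tau>0$.

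Finally I would match the mass and use the minimality of $u$. Choosing $\tau_0:=\sqrt{\mu/\|v\|_2^2}>0$ makes $\tau_0 v\in D_\mu$, and therefore, $u$ being a ground state at mass $\mu$,
\begin{equation*}
S_\omega(\tau_0 v)=E(\tau_0 v)+\f{\omega}{2}\mu\ \ge\ E(u)+\f{\omega}{2}\mu=S_\omega(u).
\end{equation*}
Combining with the previous step yields $S_\omega(v)\ge S_\omega(\tau_0 v)\ge S_\omega(u)$; as $v\in N_\omega$ was arbitrary and $u\in N_\omega$, this shows that $u$ minimizes $S_\omega$ on $N_\omega$, which is the assertion. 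The only points requiring care are the exact degree-two homogeneity of $Q$ under amplitude scaling (which must be read off the charge decomposition, since $Q$ is not merely an $H^1$ seminorm) and the positivity $Q(v)+\omega\|v\|_2^2=\|v\|_p^p>0$; notably the latter comes for free from $v\in N_\omega$, so no a priori bound $\omega>\omega_\alpha$ is needed. No compactness or regularity enters, because the argument only compares the energy and action of functions that are already given.
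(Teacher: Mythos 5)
Your proof is correct and follows essentially the same route as the paper, which does not prove Lemma \ref{chargs} in the text but defers to \cite[Appendix B]{ABCT-2d}, an adaptation of the Dovetta--Serra--Tilli argument of \cite{DST-21}: that argument is exactly your two observations, namely that on the Nehari manifold $S_\omega(\tau v)=\|v\|_p^p\bigl(\tfrac{\tau^2}{2}-\tfrac{\tau^p}{p}\bigr)$ is maximized at $\tau=1$, and that rescaling any $v\in N_\omega$ to mass $\mu$ lets the energy minimality of $u$ close the chain $S_\omega(v)\geq S_\omega(\tau_0 v)\geq S_\omega(u)$. Your checks that $I_\omega(u)=0$ by the choice of $\omega$ and that $Q$ is genuinely degree-two homogeneous through the charge decomposition are the right points of care, and nothing is missing.
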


In view of this and of Lemma \ref{chargs}, one can  see that point (ii) of Theorem \ref{exchar-gs} is a straightforward consequence of Theorem \ref{exchar-actmin}. We also mention that establishing (ii)(b) of Theorem \ref{exchar-actmin} requires an equivalent formulation of the problem of the minimization of the action. We shall in fact minimize
\[
 Q_\omega(v):=Q(v)+\omega\|v\|^2
\]
among the functions in $D$ with fixed $L^p$ norm. This technique is purely variational and does not retraces the classical ones used for the standard NLSE.

Finally, from Theorem \ref{exchar-gs} and Theorem \ref{exchar-actmin} it appears that the sign of $\alpha$ does not affect the behavior of the ground states and of the minimizers of the action. As mentioned at the beginning, this robustness is, at first sight, surprising. More precisely, while in dimension two this is natural as the sign of $\alpha$ does not even affect the existence of ground states for the linear problem, in dimension three ground  states of the linear problem exist only for negative values of $\alpha$. In other words, the intuitive idea that ground states are deformations of the linear ground state due to the ignition of a nonlinearity, is misleading. Such description is inspired by the fact that minimizers of the action exist if and only if the frequency exceeds the bottom of the spectrum of $H_{\alpha}$, which in dimension two coincides with its only eigenvalue. Yet in dimension three the analogy fails since when $\alpha\geq 0$ the eigenvalue disappears, but the minimizers of the action still exist if and only if the frequency exceeds the bottom of the spectrum of $H_{\alpha}$.

An intuitive explanation of this phenomenon can be drawn by describing the problem from another point of view. If one interprets the model as a delta perturbation of the NLSE, then one immediately sees that $D\supset H^{1}(\R^{3})$ and so, even perturbing the standard NLSE with a repulsive delta interaction, the infimum of the action gets lower with respect to the infimum of unperturbed action. Thus, the perturbed problem is in any case energetically convenient with respect to the standard one.

\smallskip
\textbf{Notation.} In the following, we use the expressions \emph{$\delta$-NLS ground states} and \emph{NLS ground states} to refer to the global minimizers of the $\delta$-NLS energy and the standard NLS energy, respectively. We use \emph{$\delta$-NLS action minimizers} and \emph{NLS action minimizers} in an analogous way.

%%%%%%%%%%%%%%%%%%%%%%%%%%%%%%%%%%%%%%%%%%%%%%%%%%%%%%%%%%%%%%%%%%%%%%%%%%%%%%%%
%%%%%%%%%%%%%%%%%%%%%%%%%%%%%%%%%%%%%%%%%%%%%%%%%%%%%%%%%%%%%%%%%%%%%%%%%%%%%%%%
%%%%%%%%%%%%%%%%%%%%%%%%%%%%%%%%%%%%%%%%%%%%%%%%%%%%%%%%%%%%%%%%%%%%%%%%%%%%%%%%
%%%%%%%%%%%%%%%%%%%%%%%%%%%%%%%%%%%%%%%%%%%%%%%%%%%%%%%%%%%%%%%%%%%%%%%%%%%%%%%%
%%%%%%%%%%%%%%%%%%%%%%%%%%%%%%%%%%%%%%%%%%%%%%%%%%%%%%%%%%%%%%%%%%%%%%%%%%%%%%%%
%%%%%%%%%%%%%%%%%%%%%%%%%%%%%%%%%%%%%%%%%%%%%%%%%%%%%%%%%%%%%%%%%%%%%%%%%%%%%%%%

\subsection*{Organization of the paper}

\begin{itemize}
 \item Section \ref{sec-prel} introduces some preliminary results that are useful throughout the paper; more precisely:
 \begin{itemize}
  \item in Section \ref{subsec-green} we recall some well-known features of the Green's function of $-\Delta+\lambda$,
  \item in Section \ref{subsec-GN} we establish two extensions of the Gagliardo-Nirenberg inequality (Proposition \ref{GNsing}),
 \end{itemize}
 \item Section \ref{sec-gs} addresses the existence of  ground states (Theorem \ref{exchar-gs}--(i));
 \item Section \ref{sec-am} addresses the existence of action minimizers (Theorem \ref{exchar-actmin}--(i));
 \item Section \ref{sec-char} establishes the main features both of the ground states and of the action minimizers (Theorem \ref{exchar-gs} --(ii)/Theorem \ref{exchar-actmin}--(ii)).
\end{itemize}

%%%%%%%%%%%%%%%%%%%%%%%%%%%%%%%%%%%%%%%%%%%%%%%%%%%%%%%%%%%%%%%%%%%%%%%%%%%%%%%%
%%%%%%%%%%%%%%%%%%%%%%%%%%%%%%%%%%%%%%%%%%%%%%%%%%%%%%%%%%%%%%%%%%%%%%%%%%%%%%%%
%%%%%%%%%%%%%%%%%%%%%%%%%%%%%%%%%%%%%%%%%%%%%%%%%%%%%%%%%%%%%%%%%%%%%%%%%%%%%%%%
%%%%%%%%%%%%%%%%%%%%%%%%%%%%%%%%%%%%%%%%%%%%%%%%%%%%%%%%%%%%%%%%%%%%%%%%%%%%%%%%
%%%%%%%%%%%%%%%%%%%%%%%%%%%%%%%%%%%%%%%%%%%%%%%%%%%%%%%%%%%%%%%%%%%%%%%%%%%%%%%%
%%%%%%%%%%%%%%%%%%%%%%%%%%%%%%%%%%%%%%%%%%%%%%%%%%%%%%%%%%%%%%%%%%%%%%%%%%%%%%%%

%%%%%%%%%%%%%%%%%%%%%%%%%%%%%%%%%%%%%%%%%%%%%%%%%%%%%%%%%%%%%%%%%%%%%%%%%%%%%%%%
%%%%%%%%%%%%%%%%%%%%%%%%%%%%%%%%%%%%%%%%%%%%%%%%%%%%%%%%%%%%%%%%%%%%%%%%%%%%%%%%
%%%%%%%%%%%%%%%%%%%%%%%%%%%%%%%%%%%%%%%%%%%%%%%%%%%%%%%%%%%%%%%%%%%%%%%%%%%%%%%%
%%%%%%%%%%%%%%%%%%%%%%%%%%%%%%%%%%%%%%%%%%%%%%%%%%%%%%%%%%%%%%%%%%%%%%%%%%%%%%%%
%%%%%%%%%%%%%%%%%%%%%%%%%%%%%%%%%%%%%%%%%%%%%%%%%%%%%%%%%%%%%%%%%%%%%%%%%%%%%%%%
%%%%%%%%%%%%%%%%%%%%%%%%%%%%%%%%%%%%%%%%%%%%%%%%%%%%%%%%%%%%%%%%%%%%%%%%%%%%%%%%
%%%%%%%%%%%%%%%%%%%%%%%%%%%%%%%%%%%%%%%%%%%%%%%%%%%%%%%%%%%%%%%%%%%%%%%%%%%%%%%%
%%%%%%%%%%%%%%%%%%%%%%%%%%%%%%%%%%%%%%%%%%%%%%%%%%%%%%%%%%%%%%%%%%%%%%%%%%%%%%%%
%%%%%%%%%%%%%%%%%%%%%%%%%%%%%%%%%%%%%%%%%%%%%%%%%%%%%%%%%%%%%%%%%%%%%%%%%%%%%%%%
%%%%%%%%%%%%%%%%%%%%%%%%%%%%%%%%%%%%%%%%%%%%%%%%%%%%%%%%%%%%%%%%%%%%%%%%%%%%%%%%
%%%%%%%%%%%%%%%%%%%%%%%%%%%%%%%%%%%%%%%%%%%%%%%%%%%%%%%%%%%%%%%%%%%%%%%%%%%%%%%%
%%%%%%%%%%%%%%%%%%%%%%%%%%%%%%%%%%%%%%%%%%%%%%%%%%%%%%%%%%%%%%%%%%%%%%%%%%%%%%%%

%%%%%%%%%%%%%%%%%%%%%%%%%%%%%%%%%%
%%%%%%%%%% Preliminari %%%%%%%%%%%
%%%%%%%%%%%%%%%%%%%%%%%%%%%%%%%%%%

\section{Preliminary results}
\label{sec-prel}

In this section we collect some preliminary results, that will be exploited in the proofs of Theorem \ref{exchar-gs} and Theorem \ref{exchar-actmin}.

%%%%%%%%%%%%%%%%%%%%%%%%%%%%%%%%%%%%%%%%%%%%%%%%%%%%%%%%%%%%%%%%%%%%%%%%%%%%%%%%
%%%%%%%%%%%%%%%%%%%%%%%%%%%%%%%%%%%%%%%%%%%%%%%%%%%%%%%%%%%%%%%%%%%%%%%%%%%%%%%%
%%%%%%%%%%%%%%%%%%%%%%%%%%%%%%%%%%%%%%%%%%%%%%%%%%%%%%%%%%%%%%%%%%%%%%%%%%%%%%%%
%%%%%%%%%%%%%%%%%%%%%%%%%%%%%%%%%%%%%%%%%%%%%%%%%%%%%%%%%%%%%%%%%%%%%%%%%%%%%%%%
%%%%%%%%%%%%%%%%%%%%%%%%%%%%%%%%%%%%%%%%%%%%%%%%%%%%%%%%%%%%%%%%%%%%%%%%%%%%%%%%
%%%%%%%%%%%%%%%%%%%%%%%%%%%%%%%%%%%%%%%%%%%%%%%%%%%%%%%%%%%%%%%%%%%%%%%%%%%%%%%%

\subsection{Properties of the Green's function}
\label{subsec-green}

First, by \eqref{eq-green} one can easily check that $\G_\la\in L^r(\Rd)$ for every $r\in[1,3)$, with
\begin{equation}
\label{eq-GLP}
 \|\G_{\la}\|_{2}^{2}=\f{1}{8\pi\sqrt{\la}}\qquad\text{and}\qquad \|\G_{\la}\|_{r}^{r}= \f{\|\G_{1}\|_r^r}{\la^{\f{3-r}{2}}},\quad\text{when}\quad r\in[1,3),
\end{equation}
and that
\begin{equation}
\label{eq-Glanu}
\|\G_{\la}-\G_{\nu}\|_{2}^{2}=\f{1}{8\pi}\left(\f{1}{\sqrt{\la}}+\f{1}{\sqrt{\nu}}-\f{4}{\sqrt{\la}+\sqrt{\nu}}\right).
\end{equation}
We recall that $\G_\la$ is positive, radially symmetric, decreasing along the radial direction, exponentially decaying at infinity and smooth up to the origin. Moreover, $\G_\la\not\in H^1(\Rd)$ and%, exploiting its Fourier representation
%\[
 %\mathcal{F}[\G_\la](k)=\frac{1}{\sqrt{8\pi^3}}\frac{1}{|k|^2+\la},
%\]
%one can find that
\begin{equation}
\label{eq-DGlanu}\|\na(\G_{\la}-\G_{\nu})\|_{2}^{2}=\f{1}{8\pi}\left(\f{3\la\sqrt{\nu}-3\nu\sqrt{\la}+\nu\sqrt{\nu}-\la\sqrt{\la}}{\nu-\la}\right).
\end{equation}
Finally, whenever $\nu<\lambda$,
\begin{equation}
\label{Gla<Gnu}
\G_\la(x)=\f{e^{-\sqrt{\la}|x|}}{4\pi|x|}<\f{e^{-\sqrt{\nu}|x|}}{4\pi|x|}=\G_\nu(x),\qquad\forall x\in\Rd\setminus\{0\}.
\end{equation}

%%%%%%%%%%%%%%%%%%%%%%%%%%%%%%%%%%%%%%%%%%%%%%%%%%%%%%%%%%%%%%%%%%%%%%%%%%%%%%%%
%%%%%%%%%%%%%%%%%%%%%%%%%%%%%%%%%%%%%%%%%%%%%%%%%%%%%%%%%%%%%%%%%%%%%%%%%%%%%%%%
%%%%%%%%%%%%%%%%%%%%%%%%%%%%%%%%%%%%%%%%%%%%%%%%%%%%%%%%%%%%%%%%%%%%%%%%%%%%%%%%
%%%%%%%%%%%%%%%%%%%%%%%%%%%%%%%%%%%%%%%%%%%%%%%%%%%%%%%%%%%%%%%%%%%%%%%%%%%%%%%%
%%%%%%%%%%%%%%%%%%%%%%%%%%%%%%%%%%%%%%%%%%%%%%%%%%%%%%%%%%%%%%%%%%%%%%%%%%%%%%%%
%%%%%%%%%%%%%%%%%%%%%%%%%%%%%%%%%%%%%%%%%%%%%%%%%%%%%%%%%%%%%%%%%%%%%%%%%%%%%%%%

\subsection{Gagliardo-Nirenberg inequalities}
\label{subsec-GN}

Here we aim at finding a version of Gagliardo-Nirenberg inequality for the energy space $D$, defined by \eqref{dom}. 

First, recall the standard three-dimensional Gagliardo-Nirenberg inequality (see e.g. \cite[Theorem 1.3.7]{C-CL03}): for every $p\in (2,6)$ there exists $C_{p}>0$ such that
\begin{equation}
\label{GNclass}
\|v\|_{p}^{p}\leq C_{p}\|\na v\|_{2}^{\f{3(p-2)}{2}}\|v\|_{2}^{\f{6-p}{2}}, \qquad\forall\, v \in H^{1}(\Rd).
\end{equation}

%Now, note that, for any fixed $\ep>0$, the set of the functions in $D$ with $q \neq 0$, i.e. $D\setminus H^1(\Rd)$, can be written as
%\begin{equation}
%\label{Sing}
%D\setminus H^1(\Rd) =\big\{u\in L^{2}(\Rd):\exists\, q\in \C\setminus\{0\}\:\text{ s.t. }\:u-q\G_{\ep\f{|q|^{4}}{\|u\|_{2}^{4}}}=:\phi\in H^{1}(\Rd)\big\}.
%\end{equation}
%In other words, 

Second, every function with $q \neq 0$ can be decomposed into a regular and a singular part according to the choice $\lambda = \ep\f{|q|^{4}}{\|u\|_{2}^{4}}$, with $\ep >0$ arbitrarily chosen, i.e.
\begin{equation}
\label{Sing}
u =  \phi + q\G_{\ep\f{|q|^{4}}{\|u\|_{2}^{4}}}, \quad \phi \in H^1 (\Rd)
\end{equation} Using such decomposition, one can prove the following result.

 \begin{proposition}[Gagliardo-Nirenberg inequalities]
 \label{GNsing}
 For every $p\in(2,3)$, there exists $K_{p}>0$ such that
 \begin{equation}
 \label{GND}
 \|v\|_p^p\leq K_{p}\left(\|\na \phi_\la\|_{2}^{\f{3(p-2)}{2}}\|\phi_\la\|_{2}^{\f{6-p}{2}}+\f{|q|^p}{\la^{\f{3-p}{2}}}\right), \qquad\forall v=\phi_\la+q\G_{\la} \in D,\quad\forall\la>0.
 \end{equation}
 Moreover, there exists $M_{p,\ep}>0$ such that
\begin{multline}
\label{GNDs}
\|v\|_{p}^{p}\leq M_{p,\ep}\left(\|\na \phi\|_{2}^{\f{3(p-2)}{2}}\|v\|_{2}^{\f{6-p}{2}}+|q|^{3(p-2)}\|v\|_{2}^{2(3-p)}\right),\\[.2cm]
\forall v=\phi+q\G_{\ep\f{|q|^{4}}{\|v\|_{2}^{4}}} \in D\setminus H^1(\Rd).
\end{multline}

\end{proposition}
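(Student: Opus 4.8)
The plan is to derive both inequalities from the classical three-dimensional Gagliardo--Nirenberg inequality \eqref{GNclass} together with the explicit $L^r$ norms of the Green's function collected in \eqref{eq-GLP}.

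To establish \eqref{GND}, I would first split the $L^p$ norm of $v=\phi_\la+q\G_\la$ by the triangle inequality and the elementary convexity bound $(a+b)^p\leqslant 2^{p-1}(a^p+b^p)$, valid for $p\geqslant 1$ and $a,b\geqslant0$, obtaining
\[
\|v\|_p^p\leqslant 2^{p-1}\big(\|\phi_\la\|_p^p+|q|^p\|\G_\la\|_p^p\big).
\]
The regular term is controlled by applying \eqref{GNclass} to $\phi_\la\in H^1(\Rd)$, which is legitimate since $p\in(2,3)\subset(2,6)$, while the singular term is handled directly through the identity $\|\G_\la\|_p^p=\|\G_1\|_p^p\,\la^{-\f{3-p}{2}}$ from \eqref{eq-GLP}. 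Setting $K_p:=2^{p-1}\max\{C_p,\|\G_1\|_p^p\}$ then yields \eqref{GND}, uniformly in $\la>0$.

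To prove \eqref{GNDs}, the idea is to specialize \eqref{GND} to the distinguished decomposition with $\la=\ep\f{|q|^4}{\|v\|_2^4}$; this is well-defined and strictly positive because $v\notin H^1(\Rd)$ forces $q\neq0$ (and $v\neq0$). A direct substitution in the singular term, collecting the powers of $|q|$ and $\|v\|_2$, gives
\[
\f{|q|^p}{\la^{\f{3-p}{2}}}=\ep^{-\f{3-p}{2}}\,|q|^{3(p-2)}\,\|v\|_2^{2(3-p)},
\]
which is precisely the second term appearing in \eqref{GNDs}. The only genuine point to address is the replacement of $\|\phi\|_2$ by $\|v\|_2$ in the regular term, and here the specific scaling is essential: since $\|\G_\la\|_2^2=(8\pi\sqrt\la)^{-1}$ by \eqref{eq-GLP}, the choice $\la=\ep\f{|q|^4}{\|v\|_2^4}$ makes the singular $L^2$-contribution exactly proportional to $\|v\|_2$, namely
\[
|q|\,\|\G_\la\|_2=\f{|q|}{\sqrt{8\pi}\,\la^{1/4}}=\f{\|v\|_2}{\sqrt{8\pi}\,\ep^{1/4}}.
\]
Combined with $\|\phi\|_2\leqslant\|v\|_2+|q|\|\G_\la\|_2$, this yields $\|\phi\|_2\leqslant\big(1+(8\pi)^{-1/2}\ep^{-1/4}\big)\|v\|_2$; inserting this bound into the regular term of \eqref{GND} produces the required $\|v\|_2^{\f{6-p}{2}}$ factor, and choosing $M_{p,\ep}$ to be $K_p$ times the larger of the two resulting constants completes the proof. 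I expect the main (indeed, essentially the only) subtlety to be exactly this interplay between the normalization of $\la$ and the scaling of the Green's function; once it is in place, everything reduces to \eqref{GNclass} and \eqref{eq-GLP}.
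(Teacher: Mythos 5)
Your proof is correct and follows essentially the same route as the paper: the convexity bound $2^{p-1}(a^p+b^p)$ together with the classical Gagliardo--Nirenberg inequality and the explicit norms in \eqref{eq-GLP} for \eqref{GND}, then substitution of $\la=\ep|q|^4/\|v\|_2^4$ and the triangle inequality on $\|\phi\|_2$ for \eqref{GNDs}. The only (harmless) difference is that you absorb $|q|\,\|\G_\la\|_2$ into a constant multiple of $\|v\|_2$ before raising to the power $\tfrac{6-p}{2}$, whereas the paper expands that power into two terms first.
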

\begin{proof}
If we fix $v=\phi_\la+q\G_\la\in D$, for some $\la>0$, then \eqref{GNclass} and \eqref{eq-GLP} yield
\begin{equation*}
\|v\|_{p}^{p}=\|\phi_{\la}+q\G_{\la}\|_{p}^{p}\leq 2^{p-1}\left(\|\phi_{\la}\|_{p}^{p}+|q|^{p}\|\G_{\la}\|_{p}^{p}\right)\leq K_{p}\left(\|\na \phi_\la\|_{2}^{\f{3(p-2)}{2}}\|\phi_\la\|_{2}^{\f{6-p}{2}}+\f{|q|^p}{\la^{\f{3-p}{2}}}\right),
\end{equation*}
that is \eqref{GND}. On the other hand, if we also assume that $q\neq 0$ and set $\la=\la_{q,\ep}:=\ep\f{|q|^{4}}{\|v\|_{2}^{4}}$, form some $\ep>0$, then by \eqref{eq-GLP}, \eqref{GND} and the triangle inequality there results
\begin{equation*}
\begin{split}
\|v\|_{p}^{p}&\leq 2^{\f{p-2}{2}}K_{p}\left(\|\na \phi\|_{2}^{\f{3(p-2)}{2}}\|v\|_{2}^{\f{6-p}{2}}+\|\na \phi\|_{2}^{\f{3(p-2)}{2}}\f{|q|^{\f{6-p}{2}}}{\la_{q,\ep}^{\f{6-p}{8}}}+\f{|q|^{p}}{\la_{q,\ep}^{\f{3-p}{2}}}\right)\\[.2cm]
&\leq M_{p,\ep}\left(\|\na \phi\|_{2}^{\f{3(p-2)}{2}}\|v\|_{2}^{\f{6-p}{2}}+|q|^{3(p-2)}\|v\|_{2}^{2(3-p)}\right),
\end{split}
\end{equation*}
which concludes the proof.
\end{proof}

\begin{remark}
Note that, whenever $q=0$, i.e. $v\in H^1(\Rd)$, \eqref{GND} reduces to \eqref{GNclass}. Note also that, in contrast to the standard Gagliardo-Nirenberg inequality, we must limit ourselves to the powers $p<3$ since $\G_\la\not\in L^p(\Rd)$ when $p\geq 3$. Finally, we highlight that $M_{p,\ep}:=2^{\f{p-2}{2}}K_p\max\big\{1,\ep^{\f{p-p}{8}},\ep^{\f{p-3}{2}}\big\}$ and, thus, $M_{p,\ep}\to+\infty$, as $\ep\downarrow0$.
\end{remark}

%%%%%%%%%%%%%%%%%%%%%%%%%%%%%%%%%%%%%%%%%%%%%%%%%%%%%%%%%%%%%%%%%%%%%%%%%%%%%%%%
%%%%%%%%%%%%%%%%%%%%%%%%%%%%%%%%%%%%%%%%%%%%%%%%%%%%%%%%%%%%%%%%%%%%%%%%%%%%%%%%
%%%%%%%%%%%%%%%%%%%%%%%%%%%%%%%%%%%%%%%%%%%%%%%%%%%%%%%%%%%%%%%%%%%%%%%%%%%%%%%%
%%%%%%%%%%%%%%%%%%%%%%%%%%%%%%%%%%%%%%%%%%%%%%%%%%%%%%%%%%%%%%%%%%%%%%%%%%%%%%%%
%%%%%%%%%%%%%%%%%%%%%%%%%%%%%%%%%%%%%%%%%%%%%%%%%%%%%%%%%%%%%%%%%%%%%%%%%%%%%%%%
%%%%%%%%%%%%%%%%%%%%%%%%%%%%%%%%%%%%%%%%%%%%%%%%%%%%%%%%%%%%%%%%%%%%%%%%%%%%%%%%
%%%%%%%%%%%%%%%%%%%%%%%%%%%%%%%%%%%%%%%%%%%%%%%%%%%%%%%%%%%%%%%%%%%%%%%%%%%%%%%%
%%%%%%%%%%%%%%%%%%%%%%%%%%%%%%%%%%%%%%%%%%%%%%%%%%%%%%%%%%%%%%%%%%%%%%%%%%%%%%%%
%%%%%%%%%%%%%%%%%%%%%%%%%%%%%%%%%%%%%%%%%%%%%%%%%%%%%%%%%%%%%%%%%%%%%%%%%%%%%%%%
%%%%%%%%%%%%%%%%%%%%%%%%%%%%%%%%%%%%%%%%%%%%%%%%%%%%%%%%%%%%%%%%%%%%%%%%%%%%%%%%
%%%%%%%%%%%%%%%%%%%%%%%%%%%%%%%%%%%%%%%%%%%%%%%%%%%%%%%%%%%%%%%%%%%%%%%%%%%%%%%%
%%%%%%%%%%%%%%%%%%%%%%%%%%%%%%%%%%%%%%%%%%%%%%%%%%%%%%%%%%%%%%%%%%%%%%%%%%%%%%%%

%%%%%%%%%%%%%%%%%%%%%%%%%%%%%%%%%%
%%%%%%%%% Ground states %%%%%%%%%%
%%%%%%%%%%%%%%%%%%%%%%%%%%%%%%%%%%

\section{Existence of ground states}
\label{sec-gs}

Here we prove point (i) of Theorem \ref{exchar-gs}, which is the existence of  ground states of mass $\mu$ for every $\mu>0$. To this aim, some further notation is required: we denote by $\Eps(\mu)$ the $\delta$-NLS energy infimum at mass $\mu$, i.e.
\[
 \Eps(\mu):=\inf_{v\in D_\mu}E(v),
\]
with $E$ defined in \eqref{E}, and by $\Eps^{0}(\mu)$ the NLS energy infimum at mass $\mu$, i.e.
\[
 \Eps^{0}(\mu):=\inf_{v\in H_\mu^1(\Rd)}E^0(v),
\]
where
\[
 E^0(v):=\frac{1}{2}\|\nabla v\|_2^2-\frac{1}{p}\|v\|_p^p\qquad\text{and}\qquad H_\mu^1(\Rd):=\{v\in H^1(\Rd):\|v\|_2^2=\mu \}.
\]

As a preliminary step we establish boundedness from below of $E$ restricted to $D_\mu$, whenever $p\in(2,3)$. By the decomposition introduced in \eqref{Sing}, the functional $E$ reads
\begin{multline}
\label{newE1}
E(u)=\\[.0cm]
\left\{
\begin{array}{ll}
\displaystyle \f{1}{2}\|\na\phi\|^2_{2}+\f{\ep|q|^{4}\|\phi\|_{2}^{2}}{2\|u\|_{2}^{4}}+\f{\alpha|q|^{2}}{2}+\f{|q|^{4}}{2\|u\|_{2}^{2}}\left(\f{\sqrt{\ep}}{4\pi}-\ep\right)-\f{\|u\|^{p}_{p}}{p}, &\text{if } u\in  D\setminus H^1(\Rd),\\[.6cm]
\displaystyle \f{1}{2}\|\na u\|^2_{2}-\f{1}{p}\|u\|^{p}_{p}, &\text{if } u \in H^{1}(\Rd).
\end{array}
\right.
\end{multline}
In addition, if one fixes $\ep<\f{1}{16\pi^{2}}$, then the coefficient in front of $\f{|q|^{4}}{2\|u\|_{2}^{2}}$ is positive. For instance,  we choose $\ep=\f{1}{64\pi^{2}}$, so that 
\begin{equation}
\label{Epala}
E(u)= \f{1}{2}\|\na\phi\|^2_{2}+\f{|q|^{4}\|\phi\|_{2}^{2}}{128\pi^2\|u\|_{2}^{4}}+\f{\alpha|q|^{2}}{2}+\f{|q|^{4}}{128\pi^{2}\|u\|_{2}^{2}}-\f{\|u\|^{p}_{p}}{p},\qquad \forall u\in D\setminus H^{1}(\R^{3}).
\end{equation}

\begin{proposition}
\label{Epabound}
For any fixed $p\in(2,3)$ and $\alpha\in\R$,
\[
 \Eps(\mu)>-\infty,\qquad\forall\mu>0.
\]
\end{proposition}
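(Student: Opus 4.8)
The plan is to produce a lower bound for $E$ on $D_\mu$ that is uniform in $u$, by splitting the admissible functions according to whether the charge vanishes, and then taking the worse of the two resulting bounds.

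\emph{Regular functions.} If $u\in H^1_\mu(\Rd)$ (i.e. charge $q=0$), then by \eqref{newE1} the energy coincides with the free NLS energy $E^0$, and I would simply feed the classical Gagliardo--Nirenberg inequality \eqref{GNclass} into it. Since $\|u\|_2^2=\mu$ is fixed, this gives
\[
E(u)=E^0(u)\geq \f12\|\na u\|_2^2-\f{C_p}{p}\,\mu^{\f{6-p}{4}}\|\na u\|_2^{\f{3(p-2)}{2}}.
\]
Because $2<p<3$ forces $\f{3(p-2)}{2}<2$, the negative term is of strictly lower order in $\|\na u\|_2$ than the positive one; viewing the right-hand side as $t\mapsto \f12 t^2-c\,t^{\ga}$ with $t=\|\na u\|_2\geq0$ and $\ga<2$, it attains a finite minimum, which bounds $E$ from below on $H^1_\mu(\Rd)$.

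\emph{Singular functions.} If $u\in D_\mu\setminus H^1(\Rd)$ (i.e. charge $q\neq0$), I would use the representation \eqref{Epala} --- valid for the choice $\ep=\tf{1}{64\pi^2}$, which makes the coefficient of $\tf{|q|^4}{2\|u\|_2^2}$ positive --- together with the singular Gagliardo--Nirenberg inequality \eqref{GNDs}. Setting $a:=\|\na\phi\|_2$, $b:=|q|$ and using $\|u\|_2^2=\mu$, after discarding the manifestly nonnegative term $\tf{|q|^4\|\phi\|_2^2}{128\pi^2\mu^2}$ and inserting \eqref{GNDs} to control $\|u\|_p^p$, I obtain
\[
E(u)\geq\Big(\f12 a^2-\f{M_{p,\ep}}{p}\mu^{\f{6-p}{4}}a^{\f{3(p-2)}{2}}\Big)+\Big(\f{b^4}{128\pi^2\mu}+\f{\alpha}{2}b^2-\f{M_{p,\ep}}{p}\mu^{3-p}b^{3(p-2)}\Big).
\]
Each bracket is a continuous function of a single nonnegative variable diverging to $+\infty$ at infinity, hence bounded below. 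For the first bracket the exponent $\f{3(p-2)}{2}<2$ is subcritical exactly as before; for the second, $p<3$ yields $3(p-2)<3<4$, so the coercive quartic term $\tf{b^4}{128\pi^2\mu}$ dominates both the perturbative power $b^{3(p-2)}$ and the (possibly negative, when $\alpha<0$) quadratic term $\tf{\alpha}{2}b^2$. Summing the two finite minima gives a lower bound independent of $u$, and taking the minimum with the bound from the regular case yields $\Eps(\mu)>-\infty$.

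\emph{Main obstacle.} The crux --- and the reason the claim is not automatic --- is the control of the singular charge when $\alpha<0$: there the form contribution $\tf{\alpha}{2}|q|^2$ is negative, and the mass constraint $\|u\|_2^2=\mu$ does \emph{not} by itself bound $|q|$, since $\|\G_\la\|_2\to0$ as $\la\to\infty$. The device that circumvents this is precisely the $\la$-dependent splitting $\la=\ep|q|^4/\|u\|_2^4$ underlying \eqref{Epala} and \eqref{GNDs}: it manufactures the coercive quartic term $\tf{b^4}{128\pi^2\mu}$, which outgrows both the negative quadratic contribution and the nonlinear $b^{3(p-2)}$ term, thereby guaranteeing boundedness from below for \emph{every} $\alpha\in\R$.
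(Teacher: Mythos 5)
Your proof is correct and follows essentially the same route as the paper: split $D_\mu$ into $H^1_\mu(\Rd)$ and its complement, use the classical Gagliardo--Nirenberg inequality \eqref{GNclass} in the first case, and combine the representation \eqref{Epala} (with $\ep=\tfrac{1}{64\pi^2}$) with the singular Gagliardo--Nirenberg inequality \eqref{GNDs} in the second, letting the quartic term $\tfrac{|q|^4}{128\pi^2\mu}$ absorb both $\tfrac{\alpha}{2}|q|^2$ and $|q|^{3(p-2)}$. The only difference is that you spell out the subcriticality of the exponents and the role of the $\la=\ep|q|^4/\|u\|_2^4$ splitting more explicitly than the paper does.
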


\begin{proof}
Let $u\in D_\mu$. We manage separately the cases $u\in H_\mu^{1}(\Rd)$ and $u\in D_\mu\setminus H_\mu^1(\Rd)$. In the former case, combining \eqref{newE1} and \eqref{GNclass}, there results
\begin{equation*}
E(u)\geq \f{1}{2}\|\na u\|^2_{2}-\f{C_{p}}{p}\|\na u\|^{\f{3(p-2)}{2}}_{2}\mu^{\f{6-p}{4}},
\end{equation*}
and thus $E$ is bounded from below on $H_\mu^{1}(\Rd)$ as $p\in(2,3)$ by assumption. In the latter case, combining \eqref{GNDs} and \eqref{Epala} and denoting by $M_p$ the constant $M_{p,\ep}$ for $\ep=\f{1}{64\pi^{2}}$, there results
\begin{multline}
\label{Eun-prel}
E(u)\geq \f{\|\na\phi\|^2_{2}}{2}-\f{M_{p}\|\na\phi\|^{\f{3(p-2)}{2}}_{2}\mu^{\f{6-p}{4}}}{p}+\f{|q|^{4}\|\phi\|^2_{2}}{128\pi^2\mu^{2}}\\
+\f{\alpha|q|^{2}}{2}+\f{|q|^{4}}{128\pi^{2}\mu}-\f{M_{p}|q|^{3(p-2)}\mu^{3-p}}{p},
\end{multline}
and thus $E$ is bounded from below also on $D_\mu\setminus H_\mu^1(\Rd)$ as, again, $p\in(2,3)$ by assumption.
\end{proof}

In the following proposition we compare the infima of the energy of the $\delta$-NLS  and of the NLS. 

\begin{proposition}
\label{Eps<Eps0}
For any fixed $p\in(2,3)$ and $\alpha\in\R$,
\begin{equation}
\label{eq-levcomp}
\Eps(\mu)<\Eps^{0}(\mu)<0,\qquad\forall \mu>0.
\end{equation} 
\end{proposition}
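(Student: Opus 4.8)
The plan is to establish the two inequalities in \eqref{eq-levcomp} separately. The right-hand inequality $\Eps^{0}(\mu)<0$ is the classical statement that the focusing NLS energy infimum is strictly negative for every mass in the subcritical range $p\in(2,3)\subset(2,6)$; I would prove it by a scaling argument. Taking any fixed $v\in H^1_\mu(\Rd)$ and considering the mass-preserving dilations $v_\sigma(x):=\sigma^{3/2}v(\sigma x)$, one computes $E^0(v_\sigma)=\tfrac{\sigma^2}{2}\|\na v\|_2^2-\tfrac{\sigma^{3(p-2)/2}}{p}\|v\|_p^p$. Since $p<6$ forces the exponent $3(p-2)/2<2$ on the negative term, for $\sigma$ small the second term dominates and $E^0(v_\sigma)<0$; hence $\Eps^{0}(\mu)<0$.

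The heart of the matter is the strict inequality $\Eps(\mu)<\Eps^{0}(\mu)$, which expresses the robustness phenomenon emphasized in the introduction: adding the point interaction, even a repulsive one, strictly lowers the infimum. The key idea is that $H^1(\Rd)\subset D$, so any NLS competitor can be perturbed by switching on a small charge $q$ and watching the energy decrease. Concretely, I would take a (near-)minimizing sequence or a fixed test function $w\in H^1_\mu(\Rd)$ with $E^0(w)$ close to $\Eps^{0}(\mu)$, and build a one-parameter family in $D_\mu$ with small charge $q\neq0$, then show that the derivative of $E$ at $q=0$ points in the direction of decreasing energy. The natural ansatz is $u=\phi+q\G_\la$ with the charge parameter $q$ small; using the representation \eqref{Epala} (valid with the specific choice $\ep=\tfrac1{64\pi^2}$) one sees that the $q$-dependent corrections to the kinetic and potential terms are of order $|q|^2$ or higher (the terms $\tfrac{\alpha|q|^2}{2}$, $\tfrac{|q|^4\|\phi\|_2^2}{128\pi^2\|u\|_2^4}$ and $\tfrac{|q|^4}{128\pi^2\|u\|_2^2}$ are all $o(|q|)$ if one scales appropriately), whereas the gain in the focusing term $-\tfrac1p\|u\|_p^p$ is of lower order in $|q|$ because of the Coulombian singularity of $\G_\la$: near the origin $\G_\la\sim\tfrac1{4\pi|x|}$, and the cross term in $\|\phi+q\G_\la\|_p^p$ produces a contribution that, for $p<3$, beats the quadratic penalty. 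I expect this asymptotic competition to be the main obstacle: one must carefully expand $\|\phi+q\G_\la\|_p^p$ as $q\to0$ and verify that the leading correction is negative and of order strictly lower than $|q|^2$, so that the sign of $\alpha$ becomes irrelevant.

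Making the orders of magnitude match cleanly is where the delicate bookkeeping lies, and I would handle it by choosing the scaling of $\la$ in terms of $q$ advantageously (for instance keeping $\la$ fixed, or tying it to $q$ as in \eqref{Sing}) so that the singular part contributes a gain dominating the $|q|^2\alpha$ and $|q|^4$ penalties. A convenient route is to first renormalize the mass: after adding $q\G_\la$ one must rescale to restore $\|u\|_2^2=\mu$, and this rescaling produces only higher-order corrections that do not spoil the sign of the leading term. Once the expansion yields $E(u)\le E^0(w)-c\,|q|^{\theta}+o(|q|^{\theta})$ with $c>0$ and some exponent $\theta$ smaller than the penalty exponents, letting $q\to0$ along this family and then optimizing over $w$ gives $\Eps(\mu)\le\Eps^0(\mu)-c\,|q|^\theta<\Eps^0(\mu)$, completing the proof. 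The strictness is automatic from $c>0$, and crucially nowhere does the argument use $\alpha<0$, which is exactly the point of the theorem.
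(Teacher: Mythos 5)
Your scaling argument for $\Eps^0(\mu)<0$ is exactly the paper's (one slip: the exponent $3(p-2)/2<2$ follows from $p<10/3$, not from $p<6$; it does hold on $(2,3)$). For the strict inequality $\Eps(\mu)<\Eps^0(\mu)$, however, your perturbative construction has a genuine gap. When you add a small charge $q\G_\la$ to a competitor $w\in H^1_\mu(\Rd)$ and renormalize the mass, the renormalization factor is $c_q=1-q\langle w,\G_\la\rangle/\mu+O(q^2)$, so it perturbs $\f{1}{2}\|\na w\|_2^2$ and $-\f{1}{p}\|w\|_p^p$ at order $|q|$ --- the \emph{same} order as the cross term $-q\int w^{p-1}\G_\la\dx$ that you are counting on for the gain. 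Your claim that the rescaling ``produces only higher-order corrections'' is therefore false, and the sign of the net first-order coefficient is not determined for a generic $w$. One can rescue the computation by taking $w=S_\mu$ and using its Euler--Lagrange equation $-\lap S_\mu+\omega_0 S_\mu=S_\mu^{p-1}$ together with $(-\lap+\la)\G_\la=\delta_0$: the $\langle S_\mu,\G_\la\rangle$ terms cancel and the first-order coefficient collapses to $-S_\mu(0)<0$. But none of this is in your sketch; the decisive expansion is explicitly left as something you ``expect'' to work, and the heuristic you offer (a gain ``of order strictly lower than $|q|^2$ because of the Coulombian singularity'') misidentifies both the mechanism and the competing terms.

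The paper avoids this bookkeeping entirely. Since $H^1_\mu(\Rd)\subset D_\mu$ one has $\Eps(\mu)\le\Eps^0(\mu)$ for free; if equality held, the soliton $S_\mu$ would be a global minimizer of $E$ on $D_\mu$, hence a bound state satisfying the boundary condition $\phi_\la(0)=(\alpha+\f{\sqrt{\la}}{4\pi})q$ in \eqref{eq-regbound}. But $S_\mu\in H^1(\Rd)$ forces $q=0$ and hence $S_\mu(0)=0$, contradicting the positivity of $S_\mu$. This short obstruction argument is the route to take; your explicit competitor, once the first-order cancellation is carried out correctly, gives the same conclusion at considerably greater cost.
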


The proof of such proposition requires the following well-known result about the NLS ground states (see e.g. \cite{L-ANIHPC84} and \cite{GNN-79}).

\begin{proposition}
\label{exstanden}
Let $p\in\left(2,\f{10}{3}\right)$. There exists an NLS ground state of mass $\mu$ for every $\mu>0$. In addition, such minimizer is unique, positive, radially symmetric and decreasing along the radial direction, up to  multiplication by a constant phase and translations.
\end{proposition}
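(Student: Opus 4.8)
The plan is to establish the three assertions --- existence, the symmetry/positivity properties, and uniqueness --- in turn, using the mass-subcriticality $p<\f{10}{3}$ throughout. For boundedness from below, \eqref{GNclass} gives, for every $v\in H^1_\mu(\Rd)$,
\[
E^0(v)\geq \f12\|\na v\|_2^2-\f{C_p}{p}\|\na v\|_2^{\f{3(p-2)}2}\mu^{\f{6-p}4},
\]
and since $p<\f{10}3$ forces $\f{3(p-2)}2<2$, the right-hand side is coercive in $\|\na v\|_2$; thus $\Eps^{0}(\mu)>-\infty$. Evaluating $E^0$ on the mass-preserving rescaling $v_t(x)=t^{3/2}v(tx)$ of a fixed $v\in H^1_\mu(\Rd)$ gives $E^0(v_t)=\f{t^2}2\|\na v\|_2^2-\f{t^{\f{3(p-2)}2}}{p}\|v\|_p^p$, which is negative for small $t$ because $\f{3(p-2)}2<2$; hence $\Eps^{0}(\mu)<0$.

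To produce a minimizer I would take a minimizing sequence, replace each term by the Schwarz symmetrization of its modulus, and invoke the P\'olya--Szeg\H{o} inequality together with $\|\na\,|v|\|_2\leq\|\na v\|_2$: this preserves $\|\cdot\|_2$ and $\|\cdot\|_p$ while not increasing the kinetic term, so the sequence may be assumed nonnegative, radially symmetric, radially decreasing, and bounded in $H^1(\Rd)$. Up to a subsequence $v_n\deb u$ in $H^1(\Rd)$ and, crucially, $v_n\to u$ strongly in $L^p(\Rd)$ by the compact Strauss embedding $H^1_{\mathrm{rad}}(\Rd)\hookrightarrow L^p(\Rd)$, valid for $2<p<6$. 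Weak lower semicontinuity of the kinetic term and strong $L^p$-convergence then give $E^0(u)\leq\Eps^{0}(\mu)$ and $\|u\|_2^2\leq\mu$. To rule out loss of mass I would exclude $\|u\|_2^2=m<\mu$ via the scaling $\tilde u=\sqrt{\mu/m}\,u$: since $E^0(u)<0$ yields $\f1p\|u\|_p^p>\f12\|\na u\|_2^2$, one finds $E^0(\tilde u)<\f\mu m E^0(u)\leq\f\mu m\Eps^{0}(\mu)<\Eps^{0}(\mu)$, contradicting $\tilde u\in H^1_\mu(\Rd)$. Hence $\|u\|_2^2=\mu$ and $u$ is a minimizer (alternatively, existence follows from the concentration-compactness principle \cite{L-ANIHPC84}).

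For the qualitative features, the minimizer (already nonnegative, radial and radially nonincreasing by construction) solves the Euler--Lagrange equation $-\lap u+\om u=u^{p-1}$ for a multiplier $\om>0$, whose positivity one checks by a Pohozaev/Nehari computation. Elliptic regularity makes $u$ smooth, and the strong maximum principle promotes $u\geq0$ to $u>0$; the strict monotonicity and radial symmetry of the positive solution follow either from the strict rearrangement inequalities or from the Gidas--Ni--Nirenberg moving-plane method \cite{GNN-79}. Since $v\mapsto e^{i\th}v$ and translations leave both $E^0$ and the mass constraint invariant, any minimizer is, up to these symmetries, a positive radial decreasing function.

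The genuinely hard point is uniqueness. After scaling out $\mu$ and $\om$ the problem reduces to the uniqueness of the positive, radially symmetric, decaying solution of $-\lap w+w=w^{p-1}$ in $\Rd$; this is Kwong's theorem and rests on a delicate ODE shooting analysis rather than on variational tools, so I would import it as an external classical result. Combined with the invariances above, it upgrades uniqueness of the positive radial profile to uniqueness of the minimizer up to translations and a constant phase. Accordingly, I expect existence and symmetry to be routine once Strauss compactness and symmetrization are in hand, while uniqueness is the main obstacle --- and it is precisely why the statement is quoted here as classical.
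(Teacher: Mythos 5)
The paper does not prove Proposition \ref{exstanden} at all: it is imported as a classical fact with pointers to \cite{L-ANIHPC84} for existence (via concentration--compactness) and \cite{GNN-79} for the symmetry, and the uniqueness assertion implicitly rests on Kwong's theorem, which the paper does not even cite. Your reconstruction is therefore necessarily a ``different route'', and it is the standard and correct one: coercivity and negativity of $\Eps^0(\mu)$ from \eqref{GNclass} and mass-preserving scaling (both hinging on $\f{3(p-2)}{2}<2$, i.e.\ $p<\f{10}{3}$), symmetrized minimizing sequences plus the Strauss compact embedding $H^1_{\mathrm{rad}}(\Rd)\hookrightarrow L^p(\Rd)$ to avoid concentration--compactness altogether, exclusion of vanishing ($u=0$ would force $\liminf E^0(v_n)\geq 0$) and of dichotomy by the strict subadditivity computation $E^0(\sqrt{\mu/m}\,u)<\f{\mu}{m}E^0(u)$, then Euler--Lagrange, maximum principle, Gidas--Ni--Nirenberg, and Kwong for the profile. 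This buys a self-contained, elementary existence proof where the paper simply defers to the literature. The only place your sketch is genuinely thin is the last step: Kwong gives uniqueness of the positive radial solution of $-\lap w+w=w^{p-1}$ at a \emph{fixed frequency}, whereas the proposition asserts uniqueness at \emph{fixed mass}; to close the gap you must observe that all candidate profiles are $w_\om(x)=\om^{\f{1}{p-2}}w_1(\sqrt{\om}\,x)$ and that $\|w_\om\|_2^2=\om^{\f{2}{p-2}-\f{3}{2}}\|w_1\|_2^2$ is strictly monotone in $\om$ precisely because $p\neq\f{10}{3}$, so a given mass selects a unique $\om$. That is one more point where the subcriticality hypothesis is used and is worth stating explicitly; otherwise the argument is sound.
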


Throughout the paper, we denote the positive symmetric NLS ground state of mass $\mu$, usually called {\em soliton}, by $S_\mu$.

\begin{proof}[Proof of Proposition \ref{Eps<Eps0}]
For any $\mu>0$, $S_\mu$ cannot be a $\delta$-NLS ground state of mass $\mu$. Indeed, a $\delta$-NLS ground state has to satisfy the boundary condition in \eqref{eq-regbound}, namely $\phi_\la(0)=(\alpha+\f{\sqrt{\la}}{4\pi})q$; but, since $S_\mu\in H^1(\Rd)$, $q=0$ and $S_\mu\equiv\phi_\la$ so that $S_\mu(0)=0$, which contradicts the positivity of $S_\mu$. As a consequence, there must exists $v\in D_\mu$ such that $E(v)<E(S_\mu)=\Eps^{0}(\mu)$, which concludes the proof of the former inequality in \eqref{eq-levcomp}.

Concerning the latter inequality, fix $\mu>0$ and consider $v\in H^{1}_{\mu}(\Rd)$. Using the mass-preserving transformation
 \begin{equation*} 
v_{\sigma}(x)=\sigma^{\f{3}{2}} v(\sigma x),
\end{equation*}
one obtains
\begin{equation*}
E^0(v_{\sigma})=\f{\sigma^{2}}{2}\|\na v\|_{2}^{2}-\f{\sigma^{\f{3(p-2)}{2}}}{p}\|v\|_{p}^{p}
\end{equation*}
and thus, since $p\in(2,3)$, choosing a small enough $\sigma$ one gets $\Eps^0(\mu)\leq  E^{0}(v_{\sigma})<0$, and the proof is complete.
\end{proof}

The last two preliminary tools necessary for the proof of point (i) of Theorem \ref{exchar-gs} are provided by the next two lemmas and concern the minimizing sequences at mass $\mu$ of the {$\delta$-NLS energy}.
%, i.e.
%\[
% (u_n)_n\subset D_\mu\qquad\text{such that}\qquad E(u_n)\to\Eps(\mu),\quad\text{as}\:n\to+\infty.
%\]

\begin{lemma} \label{q>C}
\label{minimseq1}
Let $p\in(2,3)$ and $\alpha\in\R$. For every minimizing sequence 
$u_n =\phi_{\lambda,n} + q_n \G_\lambda$
of the $\delta$-NLS energy at mass $\mu$, there exist $\bar{n}\in \N$ and $C > 0$, such that
\[
 |q_n| > C,\qquad\forall n\geq \bar{n}.
\]
\end{lemma}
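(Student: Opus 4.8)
The plan is to argue by contradiction, turning the strict inequality $\Eps(\mu)<\Eps^{0}(\mu)$ of Proposition \ref{Eps<Eps0} into the key lever. If the thesis fails, then negating the statement gives: for every $C>0$ and every $\bar n$ there is some $n\geq\bar n$ with $|q_n|\leq C$; choosing $C=1/k$ one extracts a subsequence (not relabeled) along which $q_n\to0$. The goal is to show that such a sequence forces $\Eps(\mu)\geq\Eps^{0}(\mu)$, contradicting Proposition \ref{Eps<Eps0}. Since the charge $q_n$ is intrinsic to $u_n$ and independent of the chosen decomposition, it is convenient to fix $\la=1$ and write $u_n=\phi_{1,n}+q_n\G_1$, with $\phi_{1,n}\in H^1(\Rd)$.

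First I would collect the relevant bounds. Because $\G_1\in L^{r}(\Rd)$ for all $r\in[1,3)$ and $q_n\to0$, the singular part satisfies $q_n\G_1\to0$ in $L^2$ and in $L^p$ (this is precisely where $p<3$ enters). From $\|u_n\|_2^2=\mu$ and the triangle inequality this yields $\|\phi_{1,n}\|_2^2\to\mu$. Writing the energy in the fixed-$\la$ form stemming from \eqref{eq-Q}--\eqref{E},
\[
E(u_n)=\f12\|\na\phi_{1,n}\|_2^2+\f12\|\phi_{1,n}\|_2^2-\f{\mu}{2}+\f12\Big(\alpha+\f{1}{4\pi}\Big)|q_n|^2-\f1p\|u_n\|_p^p,
\]
and combining the boundedness of $E(u_n)$ (which converges to $\Eps(\mu)$) with the Gagliardo--Nirenberg inequality \eqref{GND} and the exponent bound $\tf{3(p-2)}{2}<2$, I would deduce that $\|\na\phi_{1,n}\|_2$ is bounded; consequently $\|\phi_{1,n}\|_p$ and $\|u_n\|_p$ are bounded as well.

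Next I would compare $E(u_n)$ with the unperturbed energy $E^{0}(\phi_{1,n})=\f12\|\na\phi_{1,n}\|_2^2-\f1p\|\phi_{1,n}\|_p^p$. Subtracting, every surviving term is negligible: $\f12\|\phi_{1,n}\|_2^2-\f{\mu}{2}\to0$, the charge term is $O(|q_n|^2)\to0$, and $\|u_n\|_p^p-\|\phi_{1,n}\|_p^p\to0$ since $\big|\|u_n\|_p-\|\phi_{1,n}\|_p\big|\leq|q_n|\,\|\G_1\|_p\to0$ with both norms confined to a bounded range. Hence $E(u_n)=E^{0}(\phi_{1,n})+o(1)$. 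Finally, rescaling $\phi_{1,n}$ to exact mass $\mu$ through $\wt{\phi}_n:=(\sqrt{\mu}/\|\phi_{1,n}\|_2)\,\phi_{1,n}$, whose factor tends to $1$, one gets $E^{0}(\phi_{1,n})=E^{0}(\wt{\phi}_n)+o(1)\geq\Eps^{0}(\mu)+o(1)$ because $\wt{\phi}_n\in H_\mu^1(\Rd)$. Passing to the limit yields $\Eps(\mu)\geq\Eps^{0}(\mu)$, the desired contradiction.

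I expect the only delicate points to be purely technical: securing the uniform bound on $\|\na\phi_{1,n}\|_2$ (which rests on $p<3$, both via $\tf{3(p-2)}{2}<2$ and via $\G_\la\in L^p$), and checking that the $L^p$ and $L^2$ quantities of the regular part genuinely track those of $u_n$ as $q_n\to0$. The conceptual core is transparent: a vanishing charge degenerates $u_n$ into an essentially $H^1$ competitor, whose energy cannot fall below $\Eps^{0}(\mu)$, whereas a minimizing sequence must approach the strictly lower level $\Eps(\mu)$.
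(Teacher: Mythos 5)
Your proof is correct and follows essentially the same route as the paper: contradiction via a subsequence with $q_n\to0$, Gagliardo--Nirenberg to bound $\|\na\phi_{\la,n}\|_2$, the identification $E(u_n)=E^0(\phi_{\la,n})+o(1)$, a mass rescaling of the regular part, and the contradiction with $\Eps(\mu)<\Eps^0(\mu)$ from Proposition \ref{Eps<Eps0}. The only differences are cosmetic (fixing $\la=1$ and spelling out the subsequence extraction and the $L^p$ comparison in slightly more detail).
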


\begin{proof}
Assume by contradiction that there exists a minimizing sequence for the $\delta$-NLS energy at mass $\mu$ such that $q_n \to 0$. Then, $\| \phi_{\lambda,n} \|_2^2$ is bounded since it converges to $\mu$.
Moreover, combining \eqref{E} and \eqref{GND}, one obtains
\begin{equation*}
\begin{split}
E (u_n) &\geq  \f 1 2 \| \nabla \phi_{\lambda,n} \|_2^2 + \f \lambda 2 ( \| \phi_{\lambda,n} \|_2^2 - \mu ) 
+ \f{( \alpha + \f{\sqrt{\la}}{4\pi})} 2 |q_n |^2 \\
&\quad  - \f {K_p} p \left(\|\nabla \phi_{\lambda,n} \|_2^{\f{3(p-2)}{2}} \| \phi_{\lambda,n}\|_2^{\f{6-p}{2}}+\f{|q_{n}|^{p}}{\la^{\f{3-p}{2}}}\right)\\
&=\f 1 2 \| \nabla \phi_{\lambda,n} \|_2^2- \f {K_p} p \|\nabla \phi_{\lambda,n} \|_2^{\f{3(p-2)}{2}} \| \phi_{\lambda,n}\|_2^{\f{6-p}{2}}+o(1),\qquad\text{as}\quad n \to +\infty.
\end{split}
\end{equation*}
Thus, as $E(u_n)$ is bounded from above and $p<3$, $\| \nabla \phi_{\lambda,n} \|_{2}$ is bounded.

Now, define the sequence $ \xi_n := \frac {\sqrt \mu}{\| \phi_{\lambda,n} \|_2} \phi_{\lambda,n}$, so that $\| \xi_n \|_2^2 = \mu$, for every $n\in N$, and 
$ \| \nabla \xi_n \|_2^2 = \frac \mu  {\| \phi_{\lambda,n} \|^2_2} \| \nabla \phi_{\lambda,n} \|^2_2 $ is bounded. Then, as $\phi_{\lambda,n} - u_n \to 0$ in $L^r (\Rd)$, for all $r\in[2,3)$, using the properties of $\xi_n$, we find that
\begin{eqnarray*}
E (u_n) & = & E^0 (\phi_{\lambda, n}) + o (1) \ = \ E^0 (\xi_n) + o (1) \\
& \geq & E^0 (S_\mu) + o (1),\qquad\text{as}\quad n \to +\infty,
\end{eqnarray*}
where $S_\mu$ denotes again the soliton of mass $\mu$. Hence, passing to the limit,
$$ \mathcal E (\mu) \ \geq \mathcal E^0 (\mu), $$
which contradicts \eqref{eq-levcomp}, thus implying that $q_n\not\to0$. Since this is true for every subsequence of any minimizing sequence of the $\delta$-NLS energy, this concludes the proof.
\end{proof}

\begin{lemma}
\label{minimseq2}
Let $p\in(2,3)$, $\alpha\in\R$, $\mu>0$, $(u_{n})_n$  a minimizing sequence of the $\delta$-NLS energy  at mass $\mu$. Then, $(u_{n})_n$ is bounded in $L^r(\Rd)$ for every $r\in[2,3)$, and there exists $u\in D\setminus H^1(\Rd) $ such that, up to subsequences,
\begin{itemize}
 \item $u_n\deb u$ in $L^2(\Rd)$,
 \item $u_n\to u$ a.e. in $\Rd$.
\end{itemize}
Moreover, if one fixes $\lambda>0$ and considers the decomposition $u_{n}=\phi_{n,\la}+q_{n}\G_{\la}$, then $(\phi_{n,\la})_n$ and $(q_n)_n$ are bounded in $H^1(\Rd)$ and $\C$, respectively, and there exist $\phi_{\la}\in H^{1}(\Rd)$ and $q\in \C\setminus\{0\}$ such that $u=\phi_\la+q\G_\la$ such that up to subsequences,
\begin{itemize}
 \item $\phi_{n,\la}\deb\phi_{\la}$ in $L^2(\Rd)$,
 \item $\nabla\phi_{n,\la}\deb\nabla\phi_{\la}$ in $L^2(\Rd)$,
 \item $q_{n}\rightarrow q$ in $\C$,
\end{itemize}
as $n\to+\infty$.
\end{lemma}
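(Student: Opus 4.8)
The plan is to squeeze out of the energy every bound we need by working in the charge-adapted decomposition \eqref{Sing} with $\ep=\f{1}{64\pi^2}$, and then to transfer all the information to an arbitrary fixed $\la>0$ through the Green's function estimates of Section~\ref{subsec-green}. Throughout I keep two decompositions in mind: $u_n=\phi_n+q_n\G_{\la_n}$, with $\la_n=\f{|q_n|^4}{64\pi^2\mu^2}$ as in \eqref{Sing}, and $u_n=\phi_{n,\la}+q_n\G_\la$ at the fixed $\la$ appearing in the statement (recall that the charge $q_n$ is intrinsic and does not depend on which $\la$ one chooses). The first task is to bound $|q_n|$ and $\|\na\phi_n\|_2$. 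Since $(u_n)_n$ is minimizing, $E(u_n)$ is bounded from above, and since $q_n\neq0$ for $n\geq\bar n$ by Lemma~\ref{q>C}, the lower bound \eqref{Eun-prel} applies. Because $p<3$, the map $t\mapsto \f{t^2}{2}-\f{M_p\mu^{(6-p)/4}}{p}\,t^{\f{3(p-2)}{2}}$ is coercive (as $\f{3(p-2)}{2}<2$), forcing $\|\na\phi_n\|_2$ to be bounded; likewise $s\mapsto\f{\alpha s^2}{2}+\f{s^4}{128\pi^2\mu}-\f{M_p\mu^{3-p}}{p}\,s^{3(p-2)}$ is coercive (as $3(p-2)<4$), forcing $|q_n|$ to be bounded from above, the leftover term $\f{|q_n|^4\|\phi_n\|_2^2}{128\pi^2\mu^2}$ being nonnegative. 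Combined with the lower bound $|q_n|>C$ from Lemma~\ref{q>C}, the charges, and hence the parameters $\la_n$, stay in a compact subset of $(0,+\infty)$.

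Next I would transfer these bounds to the fixed $\la$. Writing $\phi_{n,\la}=\phi_n+q_n(\G_{\la_n}-\G_\la)$ and using that $\G_{\la_n}-\G_\la\in H^1(\Rd)$ with $H^1$-norm controlled by \eqref{eq-Glanu} and \eqref{eq-DGlanu} — bounded uniformly since $\la_n$ lies in a compact subset of $(0,+\infty)$ — together with the elementary estimate $\|\phi_{n,\la}\|_2\leq\mu^{1/2}+|q_n|\,\|\G_\la\|_2$, I obtain that $(\phi_{n,\la})_n$ is bounded in $H^1(\Rd)$ and $(q_n)_n$ is bounded in $\C$. The $L^r$-boundedness of $(u_n)_n$ for $r\in[2,3)$ then follows from the triangle inequality $\|u_n\|_r\leq\|\phi_{n,\la}\|_r+|q_n|\,\|\G_\la\|_r$, controlling the regular part by \eqref{GNclass} and the singular one through $\G_\la\in L^r(\Rd)$.

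Finally I would extract the limits along one common subsequence. Reflexivity of $H^1(\Rd)$ gives $\phi_{n,\la}\deb\phi_\la$ in $H^1(\Rd)$, i.e. $\phi_{n,\la}\deb\phi_\la$ and $\na\phi_{n,\la}\deb\na\phi_\la$ in $L^2(\Rd)$; boundedness in $\C$ gives $q_n\to q$, with $|q|\geq C>0$ by Lemma~\ref{q>C}, so $q\neq0$. Setting $u:=\phi_\la+q\G_\la\in D\setminus H^1(\Rd)$, the weak convergence $u_n\deb u$ in $L^2(\Rd)$ follows by adding the weak convergence of $\phi_{n,\la}$ to the strong convergence $q_n\G_\la\to q\G_\la$ in $L^2(\Rd)$; the a.e. convergence $u_n\to u$ follows from Rellich--Kondrachov applied to $(\phi_{n,\la})_n$ (which yields, up to a further subsequence and a diagonal extraction over balls, $\phi_{n,\la}\to\phi_\la$ a.e.) combined with the pointwise convergence $q_n\G_\la\to q\G_\la$.

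\emph{Main obstacle.} The delicate point is the upper bound on $|q_n|$. In the decomposition at a fixed $\la$ the charge enters the quadratic form only through $\big(\alpha+\f{\sqrt\la}{4\pi}\big)|q_n|^2$, whose coefficient may be negative when $\alpha<0$ and which in any case cannot absorb the term $|q_n|^p$ with $p>2$ generated by the nonlinearity through \eqref{GND}. The resolution — and the very reason for introducing the charge-adapted decomposition \eqref{Sing} — is that the choice $\la_n\propto|q_n|^4$ upgrades the quadratic charge contribution to an effectively quartic one, and this quartic term dominates $|q_n|^{3(p-2)}$ exactly because $p<3$ (so $3(p-2)<4$). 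Once this quartic coercivity is secured, everything else reduces to bookkeeping with the Green's function difference identities \eqref{eq-Glanu}--\eqref{eq-DGlanu}.
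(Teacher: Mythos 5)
Your proposal is correct and follows essentially the same route as the paper: Lemma \ref{q>C} to get $|q_n|$ bounded away from zero, the charge-adapted decomposition \eqref{Sing} with $\ep=\tfrac{1}{64\pi^2}$ together with \eqref{GNDs}/\eqref{Epala} to obtain the $H^1$-bound on the regular part and the upper bound on the charge, and then the identities \eqref{eq-Glanu}--\eqref{eq-DGlanu} to transfer everything to an arbitrary fixed $\la$ before extracting weak limits. The coercivity argument you spell out (the quartic charge term dominating $|q_n|^{3(p-2)}$) is exactly the content the paper delegates to the two-dimensional reference, so no further comment is needed.
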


\begin{proof}
The proof follows from classical arguments and retraces that of \cite[Lemma 3.5]{ABCT-2d}. We sketch it here  for the sake of completeness only.

By Banach-Alaoglu Theorem, $u_n\deb u$ in $L^2(\Rd)$ up to subsequences. Moreover, owing to Lemma \ref{q>C}, it is not restrictive to assume that $|q_n| > C > 0$, for every $n\in \N$. As a consequence, we can use the decomposition introduced by \eqref{Sing}, namely $u_{n}=\phi_{n}+q_{n}\G_{\nu_{n}}$ with $\la=\nu_{n}:=\f{1}{64\pi^{2}}\f{|q_{n}|^{4}}{\|u_{n}\|_{2}^{4}}$, and \eqref{GNDs}.

Now, arguing as in \cite[Lemma 3.5]{ABCT-2d} and using \eqref{GNDs} and \eqref{Epala}, one gets that $\phi_{n}$ is bounded in $H^{1}(\R^{3})$ and $q_{n}$ is bounded in $\C$. Finally, in order to prove the thesis for every $\la>0$ fixed, we argue again as in \cite[Lemma 3.5]{ABCT-2d} recalling that $\phi_{\la,n}:=\phi_{n}+q_{n}(\G_{\nu_{n}}-\G_{\la})$, using \eqref{eq-Glanu}, \eqref{eq-DGlanu} and that $q_n$ is bounded from above and away from zero, and distinguishing the cases $\lambda\geq1+(8\pi\mu)^{-2}\sup_n|q_n|^4$ and $\lambda<1+(8\pi\mu)^{-2}\sup_n|q_n|^4$. 
\end{proof}

Eventually, we can prove the existence of the $\delta$-NLS ground states.

\begin{proof}[Proof of Theorem \ref{exchar-gs}-(i)]
Let $(u_{n})_n$ be a minimizing sequence of the $\delta$-NLS energy at mass $\mu$. As we saw before, it is not restrictive to assume that $u_{n}=\phi_{n,\la}+q_{n}\G_{\la}$, with $q_{n}\neq 0$ and $\la>0$.  Hence Lemma \ref{minimseq2} applies. 

In order to conclude the proof we argue as follows (note that all the limits below have to be meant up to subsequences). Set $m:=\|u\|_2^2$. Weak lower semicontinuity of the $L^2(\Rd)$-norm implies $m\leq\mu$, while $q\neq 0$ implies $m\neq0$. Then suppose, by contradiction, that $m\in(0,\mu)$. Since $p>2$ and $\f{\mu}{\|u_n-u\|_2^2}>1$ for $n$ sufficiently large, there results that
\begin{equation}
\label{E-un-u}
\liminf_n E(u_n-u)\geq \f{\mu-m}{\mu}\Eps(\mu).
\end{equation}
On the other hand, it is possible to show in an analogous way that
\begin{equation}
\label{E-u}
E(u)>\f{m}{\mu}\Eps(\mu).
\end{equation}
Moreover, recalling that $u_n\deb u,\,\phi_{n,\la}\deb\phi_{\la},\,\na\phi_{n,\la}\deb\na\phi_{\la}$ in $L^2(\Rd)$ and $q_n\to q$ and using the Brezis-Lieb lemma (\cite{BL-83}), we have that
\begin{equation}
\label{BLext}
E(u_n)=E(u_n-u)+E(u)+o(1)\qquad\text{as} \quad n\to +\infty
\end{equation}
Combining \eqref{E-un-u}, \eqref{E-u} and \eqref{BLext}, one can see that
\begin{equation*}
\Eps(\mu)=\liminf_n E(u_n)=\liminf_n E(u_n-u)+ E(u)>\f{\mu-m}{\mu}\Eps(\mu)+\f{m}{\mu}\Eps(\mu)=\Eps(\mu),
\end{equation*}
which is a contradiction. Therefore, $m=\mu$, which means that $u\in D_\mu$ and that $u_n\to u$,  $\phi_{n,\la}\to \phi_\la$ in $L^2(\Rd)$. Finally, by \eqref{GND}, $u_n\to u$ in $L^p(\Rd)$ and thus
\begin{equation}
\label{eq-semicont}
E(u)\leq \liminf_n E(u_n)=\Eps(\mu),
\end{equation}
which completes the proof.
\end{proof}

%%%%%%%%%%%%%%%%%%%%%%%%%%%%%%%%%%%%%%%%%%%%%%%%%%%%%%%%%%%%%%%%%%%%%%%%%%%%%%%%
%%%%%%%%%%%%%%%%%%%%%%%%%%%%%%%%%%%%%%%%%%%%%%%%%%%%%%%%%%%%%%%%%%%%%%%%%%%%%%%%
%%%%%%%%%%%%%%%%%%%%%%%%%%%%%%%%%%%%%%%%%%%%%%%%%%%%%%%%%%%%%%%%%%%%%%%%%%%%%%%%
%%%%%%%%%%%%%%%%%%%%%%%%%%%%%%%%%%%%%%%%%%%%%%%%%%%%%%%%%%%%%%%%%%%%%%%%%%%%%%%%
%%%%%%%%%%%%%%%%%%%%%%%%%%%%%%%%%%%%%%%%%%%%%%%%%%%%%%%%%%%%%%%%%%%%%%%%%%%%%%%%
%%%%%%%%%%%%%%%%%%%%%%%%%%%%%%%%%%%%%%%%%%%%%%%%%%%%%%%%%%%%%%%%%%%%%%%%%%%%%%%%
%%%%%%%%%%%%%%%%%%%%%%%%%%%%%%%%%%%%%%%%%%%%%%%%%%%%%%%%%%%%%%%%%%%%%%%%%%%%%%%%
%%%%%%%%%%%%%%%%%%%%%%%%%%%%%%%%%%%%%%%%%%%%%%%%%%%%%%%%%%%%%%%%%%%%%%%%%%%%%%%%
%%%%%%%%%%%%%%%%%%%%%%%%%%%%%%%%%%%%%%%%%%%%%%%%%%%%%%%%%%%%%%%%%%%%%%%%%%%%%%%%
%%%%%%%%%%%%%%%%%%%%%%%%%%%%%%%%%%%%%%%%%%%%%%%%%%%%%%%%%%%%%%%%%%%%%%%%%%%%%%%%
%%%%%%%%%%%%%%%%%%%%%%%%%%%%%%%%%%%%%%%%%%%%%%%%%%%%%%%%%%%%%%%%%%%%%%%%%%%%%%%%
%%%%%%%%%%%%%%%%%%%%%%%%%%%%%%%%%%%%%%%%%%%%%%%%%%%%%%%%%%%%%%%%%%%%%%%%%%%%%%%%

%%%%%%%%%%%%%%%%%%%%%%%%%%%%%%%%%%
%%%%%%% Action minimizers %%%%%%%%
%%%%%%%%%%%%%%%%%%%%%%%%%%%%%%%%%%

\section{Existence of the minimizers of the action}
\label{sec-am}

In this section we prove (i) of Theorem \ref{exchar-actmin}, which is the existence/nonexistence of the minimizers of the $\delta$-NLS action at frequency $\omega$. It is convenient to introduce some notation. First, we denote by $d(\omega)$ the infimum of the $\delta$-NLS action at frequency $\omega$, i.e.
\[
 d(\omega):=\inf_{v\in N_\omega}S_\omega(v),
\]
where $S_\omega,\,N_\omega$ are given by \eqref{SwE} and \eqref{eq-nehari}, and define
\begin{equation}
\label{Qomega}
Q_\omega(v):=Q(v)+\omega\|v\|_2^2,
\end{equation} 
where $Q$ is given by \eqref{eq-Q}, so that
\begin{equation}
\label{eq-utile}
 S_\omega(v)=\f{1}{2}Q_\omega(v)-\f{1}{p}\|v\|_p^p\qquad\text{and}\qquad I_\omega(v)=Q_\omega(v)-\|v\|_p^p.
\end{equation}
On the other hand, we denote by $d^0(\omega)$ the infimum of the NLS action at frequency $\omega$, i.e.
\[
 d^0(\omega):=\inf_{v\in N^0_\omega}S_\omega^0(v),
\]
where
\begin{gather*}
 S_\omega^0(v):=E^0(v)+\f{\omega}{2}\|v\|_2^2,\\
 N_\omega^0:=\{v\in H^1(\Rd)\setminus\{0\}:I_\omega^0(v)=0\},\qquad I_\omega^0(v):=\langle {S_\omega^0}'(v),v\rangle.\\
\end{gather*}

Preliminarily, we note that
\begin{equation}
\label{eq-funceq}
 S_\omega(v)=\widetilde{S}(v)>0,\qquad\forall v\in N_\omega,\qquad\text{with}\qquad \widetilde{S}(v):=\f{p-2}{2p}\|v\|_p^p.
\end{equation}
As a consequence, since ${S_\omega}_{|_{H^1(\Rd)}}=S_\omega^0$ and $N_\omega\cap H^1(\Rd)=N_\omega^0$, there results 
\begin{equation}
\label{0dwd0w}
0\leq d(\omega)\leq d^0(\omega),\qquad\forall \omega\in\R.
\end{equation}
Furthermore, since $d^0(\omega)=0$ for every $\omega\leq 0$ (\cite[Lemma 2.4 and Remark 2.5]{DST-21}), it is straightforward that $d(\omega)=0$, for every $\omega\leq0$, so that there are no minimizers of the $\delta$-NLS action at $\omega\leq 0$. As a consequence, we only address  the case $\omega>0$. 
 
The former step of our proof is to investigate when inequalities \eqref{0dwd0w} are strict. We introduce the set
\begin{equation*}
\widehat{N}_{\omega}:=\{q\G_{\la}: \la>0,\,q\in\C\setminus\{0\},\,I_{\omega}(q\G_{\la})=0\},
\end{equation*}
which is the subset of $N_\omega$ of the functions that admit a representation with the sole singular part for a value of $\lambda>0$. We can characterize $\widehat{N}_{\omega}$ as follows.

\begin{lemma}
\label{qlacond}
Let $p\in(2,3)$, $\alpha\in\R$ and $\omega>0$. Then, $q\G_{\la}\in \widehat{N}_{\omega}$ if and only if
\begin{equation}
\label{admla}
\f{\omega}{\sqrt{\la}}+8\pi \alpha+\sqrt{\la}>0
\end{equation}
and
\begin{equation}
\label{qandla}
|q|=\f{\la^{\f{3-p}{2(p-2)}}}{\kappa_{p}}\left[\f{\omega}{\sqrt{\la}}+8\pi \alpha+\sqrt{\la}\right]^{\f{1}{p-2}},
\end{equation}
with $\kappa_p=(8\pi\|\G_1\|_p^{p})^{\f{1}{p-2}}$.

\end{lemma}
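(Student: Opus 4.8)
The plan is to evaluate $I_\omega(q\G_\la)$ by a direct computation, exploiting the observation that for the element $v=q\G_\la$ the natural decomposition at the very value $\la$ appearing in its name has regular part $\phi_\la\equiv 0$ and charge $q$. Consequently, in the expression \eqref{eq-Inehari} the gradient and $L^2$ terms carrying $\phi_\la$ vanish, leaving
\[
I_\omega(q\G_\la)=(\omega-\la)|q|^2\|\G_\la\|_2^2+\left(\alpha+\f{\sqrt{\la}}{4\pi}\right)|q|^2-|q|^p\|\G_\la\|_p^p.
\]
The first concrete step is to insert the explicit norms recorded in \eqref{eq-GLP}, namely $\|\G_\la\|_2^2=(8\pi\sqrt{\la})^{-1}$ and $\|\G_\la\|_p^p=\|\G_1\|_p^p\,\la^{-(3-p)/2}$.

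The second step is purely algebraic. After substitution I would collect the coefficient of $|q|^2$ and simplify its part linear in $\sqrt{\la}$, using
\[
-\f{\la}{8\pi\sqrt{\la}}+\f{\sqrt{\la}}{4\pi}=\f{\sqrt{\la}}{8\pi},
\]
so that the coefficient reduces to $\f{1}{8\pi}\big(\f{\omega}{\sqrt{\la}}+8\pi\alpha+\sqrt{\la}\big)$. Hence the Nehari condition $I_\omega(q\G_\la)=0$ is equivalent to the scalar identity
\[
\f{|q|^2}{8\pi}\left(\f{\omega}{\sqrt{\la}}+8\pi\alpha+\sqrt{\la}\right)=|q|^p\,\f{\|\G_1\|_p^p}{\la^{(3-p)/2}}.
\]

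For the two implications I would then argue as follows. Since by definition of $\widehat{N}_\omega$ one has $q\neq 0$, I may divide by $|q|^2$; the right-hand side is then a strictly positive multiple of $|q|^{p-2}>0$, forcing the bracket on the left to be strictly positive, which is exactly \eqref{admla}. Solving the identity for $|q|^{p-2}$ and taking the $(p-2)$-th root yields \eqref{qandla} with $\kappa_p=(8\pi\|\G_1\|_p^p)^{1/(p-2)}$. Conversely, if \eqref{admla} and \eqref{qandla} hold, then $q\G_\la\in D$ (it coincides with its own singular part, so $v-q\G_\la=0\in H^1(\Rd)$) with charge $q\neq 0$, and substituting \eqref{qandla} back into the displayed identity gives $I_\omega(q\G_\la)=0$, i.e. $q\G_\la\in\widehat{N}_\omega$.

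I do not expect a genuine obstacle here, since the statement is an explicit computation rather than an existence argument. The only points requiring care are the bookkeeping of the $\la$-powers and the remark that \eqref{admla} is precisely the condition making the $(p-2)$-th root in \eqref{qandla} well defined; recall that $p\in(2,3)$, so $p-2>0$ and the exponent $\f{3-p}{2(p-2)}>0$. One should also note that $\la>0$ is genuinely free, so that \eqref{admla}--\eqref{qandla} describe a one-parameter family of elements of $\widehat{N}_\omega$ rather than a single function.
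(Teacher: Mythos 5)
Your computation is correct and follows exactly the paper's own argument: substitute the explicit norms from \eqref{eq-GLP} into $I_\omega(q\G_\la)=0$ with $\phi_\la\equiv 0$, simplify the coefficient of $|q|^2$ to $\tfrac{1}{8\pi}\bigl(\tfrac{\omega}{\sqrt{\la}}+8\pi\alpha+\sqrt{\la}\bigr)$, and solve for $|q|^{p-2}$, with positivity of $|q|^{p-2}$ giving \eqref{admla}. The only difference is that you spell out the converse implication explicitly, which the paper leaves implicit in its ``if and only if'' chain.
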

\begin{proof}
Let $q\neq 0$ and $\la>0$. By \eqref{eq-GLP}, $I_{\omega}(q\G_{\la})=0$ if and only if
\begin{equation*}
\f{\omega-\la}{8\pi\sqrt{\la}}|q|^{2}+\left(\alpha+\f{\sqrt{\la}}{4\pi}\right)|q|^{2}-\f{\kappa}{\la^{\f{3-p}{2}}}|q|^{p}=0, 
\end{equation*}
with $\kappa:=\|\G_1\|_p^p$, so that
\begin{equation}
|q|^{p-2}=\f{\la^{\f{3-p}{2}}}{\kappa}\left(\f{\omega-\la}{8\pi\sqrt{\la}}+\alpha+\f{\sqrt{\la}}{4\pi}\right)=\f{\la^{\f{3-p}{2}}}{8\pi \kappa}\left(\f{\omega}{\sqrt{\la}}+8\pi \alpha+\sqrt{\la}\right).
\end{equation}
Since $|q|^{p-2}>0$, \eqref{admla} and \eqref{qandla} follow.
\end{proof}

\begin{lemma}
\label{Nomega}
Let $p\in(2,3)$, $\alpha\in \R$ and $\omega>0$. Then
\begin{itemize}
 \item[(i)] if $\alpha<0$ and $\omega\in(0,{\omega}_{\alpha})$, then there exists $\la_{1}(\omega)\in(0,{\omega}_{\alpha})$ and $\la_{2}(\omega)>{\omega}_{\alpha}$ such that 
\begin{equation}
\widehat{N}_{\omega}=\{q\G_{\la}: \la\in(0,\la_{1}(\omega))\cup(\la_{2}(\omega),+\infty)\text{ and }q\in\C\setminus\{0\}\text{ and satisfies \eqref{qandla}}\};
\end{equation}
in particular, $\la_{1}(\omega)$ and $\la_{2}(\omega)$ are the sole solutions of the equation
\begin{equation*}
\f{\omega}{\sqrt{\la}}+8\pi \alpha+\sqrt{\la}=0;
\end{equation*}
 \item[(ii)] if $\alpha<0$ and $\omega={\omega}_{\alpha}$, then
\begin{equation}
\widehat{N}_{\omega}=\{q\G_{\la}:\la>0,\,\la\neq {\omega}_{\alpha},\text{ and }q\in\C\setminus\{0\}\text{ and satisfies \eqref{qandla}}\};
\end{equation}
\item[(iii)] if $\omega>{\omega}_{\alpha}$, then 
\begin{equation}
\widehat{N}_{\omega}=\{q\G_{\la}:\la>0\text{ and }q\in\C\setminus\{0\}\text{ and satisfies \eqref{qandla}}\}
\end{equation}
\end{itemize}
(We recall that $\omega_\al$ was defined in \eqref{bott-spect})
\end{lemma}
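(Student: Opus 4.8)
The plan is to reduce the whole statement to the study of the admissibility condition \eqref{admla}. Indeed, Lemma \ref{qlacond} already guarantees that whenever \eqref{admla} holds the charge $|q|$ is uniquely determined by \eqref{qandla}; hence the only genuine task is to describe, in each of the three regimes, the exact set of $\la>0$ for which \eqref{admla} is satisfied.

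First I would make the substitution $s:=\sqrt{\la}>0$ and multiply \eqref{admla} by $s>0$, turning it into the equivalent quadratic inequality
\[
g(s):=s^{2}+8\pi\alpha\,s+\omega>0,\qquad s>0.
\]
Its discriminant is $\Delta=64\pi^{2}\alpha^{2}-4\omega$, and the decisive observation is that, when $\alpha<0$, one has $64\pi^{2}\alpha^{2}=4(4\pi\alpha)^{2}=4\omega_\alpha$, so that $\Delta=4(\omega_\alpha-\omega)$. Thus the sign of $\Delta$ is governed precisely by the comparison between $\omega$ and $\omega_\alpha$, which is exactly what separates the three cases.

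In case (i), with $\alpha<0$ and $\omega\in(0,\omega_\alpha)$, one has $\Delta>0$ and the two roots of $g$ are $s_{1,2}=-4\pi\alpha\mp\sqrt{\omega_\alpha-\omega}$. Setting $a:=-4\pi\alpha>0$ (so $\omega_\alpha=a^{2}$) and $b:=\sqrt{\omega_\alpha-\omega}\in(0,a)$, both roots $s_1=a-b$ and $s_2=a+b$ are positive, and $g(s)>0$ exactly for $s<s_1$ or $s>s_2$. Squaring back yields $\la_1(\omega)=(a-b)^{2}$ and $\la_2(\omega)=(a+b)^{2}$, which by construction are the two solutions of $\f{\omega}{\sqrt{\la}}+8\pi\alpha+\sqrt{\la}=0$, and the bound $b<a$ gives $\la_1\in(0,\omega_\alpha)$ and $\la_2>\omega_\alpha$. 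Case (ii) is the degenerate limit $\omega=\omega_\alpha$: here $\Delta=0$ and $g(s)=(s+4\pi\alpha)^{2}$ vanishes only at $s=-4\pi\alpha$, i.e. $\la=\omega_\alpha$, and is strictly positive elsewhere, so every $\la>0$ with $\la\neq\omega_\alpha$ is admissible. For case (iii), $\omega>\omega_\alpha$: if $\alpha<0$ then $\Delta<0$ and $g>0$ on all of $(0,+\infty)$; if $\alpha\geq0$ then $\omega_\alpha=0$, hence $\omega>0$, and for $s>0$ the expression $g(s)=s^{2}+8\pi\alpha s+\omega$ is a sum of nonnegative terms plus $\omega>0$, again strictly positive. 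Thus in both subcases the whole of $(0,+\infty)$ is admissible.

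The computations are elementary throughout; the only point deserving a little care is the verification in case (i) that the two roots straddle $\omega_\alpha$, that is $\la_1<\omega_\alpha<\la_2$. This is exactly where the identity $\omega_\alpha=(4\pi\alpha)^{2}$ and the bound $b=\sqrt{\omega_\alpha-\omega}<\sqrt{\omega_\alpha}=a$ enter, guaranteeing simultaneously that both roots $s_{1,2}$ are genuine positive values and that they lie on opposite sides of $\sqrt{\omega_\alpha}=a$.
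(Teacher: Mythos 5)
Your proof is correct and follows essentially the same route as the paper: both reduce, via Lemma \ref{qlacond}, to determining the sign of $\la+8\pi\alpha\sqrt{\la}+\omega$ and identify the roots $\la_{1,2}(\omega)=\bigl(\sqrt{\omega_\alpha}\mp\sqrt{\omega_\alpha-\omega}\bigr)^2$. The only (cosmetic) difference is that you solve the quadratic in $s=\sqrt{\la}$ directly through its discriminant, whereas the paper reaches the same sign picture by a short monotonicity analysis of the auxiliary function $g(\la)=\la^{-\f{p-2}{2}}\bigl(\la+8\pi\alpha\sqrt{\la}+\omega\bigr)$.
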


\begin{remark}
\label{rem-semp}
 We highlight that, in the previous result, the case $\alpha\geq0$ is always taken into account by (iii), since in this case ${\omega}_{\alpha}=0$.
\end{remark}

\begin{proof}[Proof of Lemma \ref{Nomega}]
Let $\omega>0$ and 
\begin{equation*}
g(\la):=\f{1}{\la^{\f{p-2}{2}}}\left(\la+8\pi\alpha\sqrt{\la}+\omega\right).
\end{equation*}
Recall that, in view of Lemma \ref{qlacond}, $q\G_{\la}\in \widehat{N}_{\omega}$ if and only if $g(\la)>0$ and $q$ satisfies \eqref{qandla}, namely $|q|=\kappa_p^{-1}g^{\f{1}{p-2}}(\la)$. First, we observe that, when $\alpha \geq 0$, ${\omega}_{\alpha}=0$ so that $g$ is strictly positive on $\R^+$, for any $\omega>{\omega}_{\alpha}$. Thus (iii) is straightforward in this case.

We focus then on $\alpha<0$.  One can easily check that, as $p\in(2,3)$,
\begin{equation*}
\lim_{\la\to 0^{+}}g(\la)=+\infty\,,\quad \lim_{\la\to +\infty} g(\la)=+\infty.
\end{equation*}
On the other hand, since
\[
 g'(\lambda)=\f{(4-p)\la+2(3-p)\pi\alpha\sqrt{\la}-(p-2)\omega}{2\la^{\f{p}{2}}},
\]
$g$ has a unique critical point on $\R^+$. Hence there exists $\la^{*}=\la^{*}(\omega)$ such that $g$ is strictly decreasing for $\la\leq \la^{*}$ and strictly increasing for $\la \geq \la^{*}$, being $g(\la^{*})$ the global minimum of $g$ on $\R^+$. In particular, if $\omega>{\omega}_{\alpha}$, then $g$ is strictly positive and \eqref{admla} admits a solution for every $\la>0$, so that (iii) is proved. On the contrary, if $\omega={\omega}_{\alpha}$, then $g$ vanishes for $\la= {\omega}_{\alpha}$ only, and \eqref{admla} admits a solution whenever $\la>0$ and $\la\neq{\omega}_{\alpha}$, so that (ii) is proved. Finally, if $\omega<{\omega}_{\alpha}$, then $g$ vanishes at two points $\la_{1,2}(\omega)=\left(\sqrt{{\omega}_{\alpha}}\mp\sqrt{{\omega}_{\alpha}- \omega}\right)^{2}$, with $\la_{1}(\omega)<{\omega}_{\alpha}<\la_{2}(\omega)$, and is negative for $\la\in [\la_{1}(\omega),\la_{2}(\omega)]$. Hence, \eqref{admla} does not admits any solution if and only if $\la\in [\la_{1}(\omega),\la_{2}(\omega)]$, so that (i) is proved.
\end{proof}

After this characterization of the set $\widehat{N}_\omega$, we can estimate the value of $d(\omega)$ for $\alpha<0$ and $\omega\in(0,{\omega}_{\alpha}]$ (note that, in view of Remark \ref{rem-semp} and the comments after \eqref{0dwd0w}, the analogous for $\alpha\geq0$ is trivial).

\begin{proposition}
\label{pro-dzero}
Let $p\in(2,3)$ and $\alpha<0$. Then, $d(\omega)=0$ for every  $\omega\in(0,{\omega}_{\alpha}]$. 
\end{proposition}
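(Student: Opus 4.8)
The plan is to exploit the fact that we already know $d(\omega)\ge 0$ for free: by \eqref{eq-funceq} every element of $N_\omega$ has strictly positive action, so it suffices to produce a sequence in $N_\omega$ along which $S_\omega$ tends to $0$, which forces $d(\omega)=0$. The natural candidates are the purely singular Nehari functions $q\G_\la\in\widehat{N}_\omega$, whose existence for the relevant values of $\omega$ and $\la$ is guaranteed by Lemma \ref{Nomega}.

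First I would evaluate the action on such functions. Since $q\G_\la\in N_\omega$, identity \eqref{eq-funceq} gives $S_\omega(q\G_\la)=\f{p-2}{2p}|q|^p\|\G_\la\|_p^p$. Inserting the value of $\|\G_\la\|_p^p$ from \eqref{eq-GLP} and the value of $|q|$ dictated by the Nehari constraint \eqref{qandla}, and simplifying the resulting power of $\la$, one obtains an explicit expression of the form
\begin{equation*}
S_\omega(q\G_\la)=\f{p-2}{2p}\,\f{\|\G_1\|_p^p}{\kappa_p^{\,p}}\,\la^{\f{3-p}{p-2}}\left[\f{\omega}{\sqrt\la}+8\pi\alpha+\sqrt\la\right]^{\f{p}{p-2}}.
\end{equation*}
In other words, the action restricted to $\widehat{N}_\omega$ is, up to a positive continuous prefactor in $\la$, a positive power of the very bracket $\f{\omega}{\sqrt\la}+8\pi\alpha+\sqrt\la$ that controls membership in $\widehat{N}_\omega$ through \eqref{admla}.

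The conclusion then follows by letting $\la$ approach the values at which this bracket degenerates to zero, as identified in Lemma \ref{Nomega}. For $\omega\in(0,{\omega}_\alpha)$ I would send $\la\to\la_1(\omega)^-$ (or equivalently $\la\to\la_2(\omega)^+$), where the bracket vanishes by the very definition of $\la_{1,2}(\omega)$; for $\omega={\omega}_\alpha$ I would send $\la\to{\omega}_\alpha$, the unique zero of the bracket in that case. In every instance the bracket tends to $0^+$ while the prefactor $\la^{(3-p)/(p-2)}$ stays bounded and bounded away from zero, the limiting value of $\la$ being finite and strictly positive, so that $S_\omega(q\G_\la)\to 0$. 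This exhibits a minimizing sequence in $N_\omega$ with action converging to $0$, whence $d(\omega)\le 0$; combined with $d(\omega)\ge 0$ from \eqref{0dwd0w} this yields $d(\omega)=0$.

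I do not expect a serious obstacle here: the argument is essentially careful bookkeeping once one realizes that the purely singular functions provide admissible competitors whose action can be driven to zero exactly when the Nehari constraint becomes critical. The only point requiring a little care is checking that the $\la$-prefactor neither blows up nor vanishes in the limit, which is immediate since $\la_1(\omega),\la_2(\omega)$ and ${\omega}_\alpha$ are all finite and positive; note also that, because $S_\omega>0$ on $N_\omega$, this infimum is never attained, consistently with the nonexistence of action minimizers for $\omega\le{\omega}_\alpha$ asserted in Theorem \ref{exchar-actmin}.
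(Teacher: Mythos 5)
Your proposal is correct and follows essentially the same route as the paper: both evaluate $S_\omega$ on the purely singular Nehari competitors $q\G_\la\in\widehat{N}_\omega$ via \eqref{eq-funceq} and \eqref{eq-GLP}, and send $\la$ to a zero of the bracket in \eqref{admla} so that $|q|\to 0$ by \eqref{qandla}, forcing the action to vanish. Your explicit formula for $S_\omega(q\G_\la)$ (with exponent $\la^{(3-p)/(p-2)}$) checks out and merely makes the paper's implicit computation concrete.
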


\begin{proof}
We give the proof in the cases $\omega\in(0,{\omega}_{\alpha})$ and $\omega={\omega}_{\alpha}$ separately. If $\omega\in(0,{\omega}_{\alpha})$, then by Lemma \ref{Nomega}
\begin{equation*}
\lim_{\substack{\la\to\la_{1}(\omega)^{-},\\ q\G_{\la}\in N_{\omega}}}|q|=\lim_{\la\to\la_{1}(\omega)^{-}}\f{\la^{\f{3-p}{2(p-2)}}}{\kappa_{p}}\left[\f{\omega}{\sqrt{\la}}+8\pi \alpha+\sqrt{\la}\right]^{\f{1}{p-2}}=0.
\end{equation*}
Hence, combining with \eqref{eq-funceq} and \eqref{eq-GLP},
\begin{multline*}
0\leq d(\omega)\leq \inf_{q\G_{\la}\in N_{\omega}}S_{\omega}(q\G_{\la})\leq \lim_{\substack{\la\to\la_{1}(\omega)^{-},\\ q\G_{\la}\in N_{\omega}}}S_{\omega}(q\G_{\la})\\
=\lim_{\substack{\la\to\la_{1}(\omega)^{-},\\ q\G_{\la}\in N_{\omega}}}\widetilde{S}(q\G_{\la})=\lim_{\substack{\la\to\la_{1}(\omega)^{-},\\ q\G_{\la}\in N_{\omega}}}\f{p-2}{2p}\|\G_{1}\|_{p}^{p}\f{|q|^{p}}{\la^{\f{3-p}{2}}}=0.
\end{multline*}
If, on the contrary, $\omega={\omega}_{\alpha}$, then one finds the same chain of inequalities and concludes by replacing the limits for $\la\to\la_{1}(\omega)^{-}$ with the limits for $\la\to{\omega}_{\alpha}$.
\end{proof}

The first consequence of this result if the following (again, the analogous for $\alpha\geq0$ is omitted since it is trivial).

\begin{corollary}
\label{noexistlowfreq}
Let $p\in(2,3)$ and $\alpha<0$. If $\omega\in(0,{\omega}_{\alpha}]$, then there does not exist any minimizer of the $\delta$-NLS action at frequency $\omega$.
\end{corollary}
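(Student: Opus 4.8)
The plan is to combine Proposition \ref{pro-dzero} with the fact that the action $S_\omega$ is \emph{strictly} positive on the whole Nehari manifold, so that the infimum value $d(\omega)=0$ cannot possibly be attained. First I would recall from \eqref{eq-funceq} that for every $v\in N_\omega$ one has $S_\omega(v)=\widetilde{S}(v)=\f{p-2}{2p}\|v\|_p^p$. Since any element of $N_\omega$ is nonzero by definition, its $L^p$-norm is strictly positive, and since $p>2$ the prefactor $\f{p-2}{2p}$ is positive as well; hence $S_\omega(v)>0$ for every $v\in N_\omega$.

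Next I would argue by contradiction. Suppose that, for some $\omega\in(0,{\omega}_{\alpha}]$, there exists a minimizer $u\in N_\omega$ of the $\delta$-NLS action at frequency $\omega$. By the definition of minimizer and by Proposition \ref{pro-dzero}, one would then have $S_\omega(u)=d(\omega)=0$. On the other hand, the strict positivity just recalled forces $S_\omega(u)>0$, a contradiction. Therefore no minimizer can exist in this range of frequencies, which is exactly the assertion.

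I would stress that there is no genuine obstacle at this stage: the entire analytic content has already been absorbed into Proposition \ref{pro-dzero}, whose proof produces minimizing sequences made of the purely singular functions $q\G_\la\in N_\omega$ with $|q|\to0$ (equivalently $\la\to\la_1(\omega)^-$ or $\la\to{\omega}_{\alpha}$), driving $\widetilde{S}(q\G_\la)$, and hence the action, down to $0$. The corollary is then a one-line consequence of this limiting value together with the uniform lower bound $S_\omega>0$ on $N_\omega$, the decisive point being precisely that the infimum $0$ is approached but never reached on the manifold.
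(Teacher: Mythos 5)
Your argument is correct and is exactly the paper's proof: the paper derives the corollary from Proposition \ref{pro-dzero} (which gives $d(\omega)=0$) together with \eqref{eq-funceq} (which gives $S_\omega(v)=\tfrac{p-2}{2p}\|v\|_p^p>0$ for every $v\in N_\omega$), so the infimum cannot be attained. No gaps; your additional remarks about the minimizing sequences $q\G_\la$ are accurate but not needed for the corollary itself.
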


\begin{proof}
The claim follows by Proposition \ref{pro-dzero} and \eqref{eq-funceq}.
\end{proof}

Before showing the proof of point (i) of Theorem \ref{exchar-actmin}, the last preliminary step is to discuss the behavior of $d(\omega)$ when $\omega>{\omega}_{\alpha}$. We start by recalling the following relation between $S_\omega$ and $\widetilde{S}$.

\begin{lemma}
\label{equivprob}
For every $p\in(2,3)$, $\alpha\in \R$, and $\omega>{\omega}_{\alpha}$
\begin{equation}
\label{eq-firstequiv}
d(\omega)=\inf_{v\in\widetilde{N}_\omega}\widetilde{S}(v),
\end{equation}
with
\[
 \widetilde{N}_\omega:=\{v\in D\setminus\{0\}:I_{\omega}(v)\leq0\}.
\]
In addition,
\begin{equation}
\label{eq-secondequiv}
\left\{\begin{array}{l}\displaystyle\widetilde{S}(u)=d(\omega)\\[.2cm]\displaystyle I_{\omega}(u)\leq 0\end{array}\right.\qquad\Longleftrightarrow\qquad \left\{\begin{array}{l}\displaystyle S_{\omega}(u)=d(\omega)\\[.2cm]I_{\omega}(u)=0.\end{array}\right.,
\qquad\forall u\in D\setminus\{0\}.
\end{equation}
\end{lemma}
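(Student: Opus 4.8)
The plan is to exploit the fibering $t\mapsto tv$ along rays in $D$, combined with the strict positivity of $Q_\omega$ that the hypothesis $\omega>\omega_\alpha$ secures. First I would record this positivity: by the definition of $\omega_\alpha$ in \eqref{bott-spect} one has $Q(v)\geq-\omega_\alpha\|v\|_2^2$ for every $v\in D$, whence
\[
Q_\omega(v)=Q(v)+\omega\|v\|_2^2\geq(\omega-\omega_\alpha)\|v\|_2^2>0,\qquad\forall v\in D\setminus\{0\},
\]
precisely because $\omega>\omega_\alpha$. Since $D\subset L^p(\Rd)$ by Proposition \ref{GNsing}, also $\|v\|_p^p>0$ for every $v\in D\setminus\{0\}$.

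Next comes the fibering step. For $v\in D\setminus\{0\}$ and $t>0$, \eqref{eq-utile} gives $I_\omega(tv)=t^2Q_\omega(v)-t^p\|v\|_p^p$. As $p>2$ and both coefficients are strictly positive, this vanishes at the unique positive value
\[
t^*(v):=\left(\f{Q_\omega(v)}{\|v\|_p^p}\right)^{\f{1}{p-2}},
\]
so that $t^*(v)\,v\in N_\omega$; moreover $I_\omega(v)\leq0$ is equivalent to $Q_\omega(v)\leq\|v\|_p^p$, i.e. to $t^*(v)\leq1$, with equality exactly when $I_\omega(v)=0$. I would also record the homogeneity $\widetilde{S}(tv)=t^p\widetilde{S}(v)$, immediate from $\widetilde{S}(v)=\f{p-2}{2p}\|v\|_p^p\geq0$.

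For the identity \eqref{eq-firstequiv} I would argue by two inequalities. Since $N_\omega\subset\widetilde{N}_\omega$ and $S_\omega=\widetilde{S}$ on $N_\omega$ by \eqref{eq-funceq}, one gets $\inf_{\widetilde{N}_\omega}\widetilde{S}\leq\inf_{N_\omega}S_\omega=d(\omega)$. Conversely, for any $v\in\widetilde{N}_\omega$ the projected function $t^*(v)\,v$ lies in $N_\omega$ with $t^*(v)\leq1$, so by \eqref{eq-funceq} and the homogeneity
\[
d(\omega)\leq S_\omega(t^*(v)\,v)=\widetilde{S}(t^*(v)\,v)=(t^*(v))^p\,\widetilde{S}(v)\leq\widetilde{S}(v);
\]
taking the infimum over $\widetilde{N}_\omega$ yields the reverse inequality, hence equality.

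Finally, for the equivalence \eqref{eq-secondequiv}, the implication $\Leftarrow$ is immediate, since $I_\omega(u)=0$ places $u$ in $N_\omega$, where $\widetilde{S}(u)=S_\omega(u)=d(\omega)$. For $\Rightarrow$, assume $\widetilde{S}(u)=d(\omega)$ and $I_\omega(u)\leq0$ with $u\neq0$; then $\widetilde{S}(u)=\f{p-2}{2p}\|u\|_p^p>0$, so $d(\omega)>0$. Repeating the chain above with $v=u$ gives $d(\omega)\leq(t^*(u))^p\,d(\omega)\leq d(\omega)$, which forces $(t^*(u))^p=1$, hence $t^*(u)=1$ and $I_\omega(u)=0$; then $S_\omega(u)=\widetilde{S}(u)=d(\omega)$. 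The only subtle point is ensuring $d(\omega)>0$ so that the chain can be inverted; here it comes for free from $u\neq0$, but conceptually it is exactly the strict positivity $Q_\omega>0$ granted by $\omega>\omega_\alpha$ that prevents $d(\omega)$ from collapsing to zero (as it does for lower frequencies, cf. Proposition \ref{pro-dzero}), and this is the one place where the hypothesis is genuinely used.
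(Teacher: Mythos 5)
Your proof is correct and is precisely the classical fibering/Nehari-projection argument that the paper invokes (it gives no details, deferring to the analogous Lemma 4.5 of the planar paper \cite{ABCT-2d}): positivity of $Q_\omega$ for $\omega>\omega_\alpha$, projection $v\mapsto t^*(v)v$ onto $N_\omega$ with $t^*(v)\leq 1$ on $\widetilde N_\omega$, and the homogeneity $\widetilde S(tv)=t^p\widetilde S(v)$. Nothing further is needed.
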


\begin{proof}
 The proof follows from classical arguments and is analogous to that of \cite[Lemma 4.5]{ABCT-2d}.
\end{proof}

\begin{remark}
\label{rem-peq}
 Note that this result implies that, searching for minimizers of the $\delta$-NLS action  means searching for
\[
 u\in\widetilde{N}_\omega\qquad\text{such that}\qquad\widetilde{S}(u)=\inf_{v\in\widetilde{N}_\omega}\widetilde{S}(v)=d(\omega).
\]
\end{remark}

Now, the former point is to prove that the left inequality of \eqref{0dwd0w} is strict.

\begin{proposition}
\label{dpos}
For every $p\in(2,3)$, $\alpha\in\R$ and $\omega>{\omega}_{\alpha}$, there results that $d(\omega)>0$.
\end{proposition}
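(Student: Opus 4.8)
The goal is to prove that the infimum $d(\omega)$ of the action is strictly positive when $\omega > \omega_\alpha$. By Lemma \ref{equivprob} and the identity \eqref{eq-funceq}, minimizing the action amounts to minimizing $\widetilde{S}(v) = \frac{p-2}{2p}\|v\|_p^p$ over the set $\widetilde{N}_\omega = \{v \in D\setminus\{0\} : I_\omega(v) \leq 0\}$. Since $\widetilde{S}(v)$ is a positive multiple of $\|v\|_p^p$, the statement $d(\omega) > 0$ is equivalent to showing that the $L^p$-norm is bounded away from zero on $\widetilde{N}_\omega$; that is, there is no sequence in $\widetilde{N}_\omega$ with $\|v_n\|_p \to 0$.

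The plan is to argue by contradiction. Suppose $d(\omega) = 0$; then by Remark \ref{rem-peq} there exists a sequence $(v_n) \subset \widetilde{N}_\omega$ with $\widetilde{S}(v_n) \to 0$, equivalently $\|v_n\|_p^p \to 0$. The constraint $I_\omega(v_n) \leq 0$ reads $Q_\omega(v_n) \leq \|v_n\|_p^p$ by \eqref{eq-utile}, so the quadratic part $Q_\omega(v_n)$ also tends to zero. The key is that $Q_\omega(v) = Q(v) + \omega\|v\|_2^2$, and since $\omega > \omega_\alpha = -\inf_{v} Q(v)/\|v\|_2^2$ by \eqref{bott-spect}, the form $Q_\omega$ is coercive: there exists a constant $c = c(\omega,\alpha) > 0$ such that $Q_\omega(v) \geq c\,\|v\|_2^2$ for all $v \in D$. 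This coercivity is the crucial ingredient, and it follows directly from the definition of $\omega_\alpha$ as the bottom of the spectrum, since $\omega > \omega_\alpha$ gives a spectral gap. Consequently $\|v_n\|_2^2 \to 0$ as well.

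Having both $\|v_n\|_p \to 0$ and $\|v_n\|_2 \to 0$, I would now apply the Gagliardo--Nirenberg inequality \eqref{GNDs} from Proposition \ref{GNsing} to derive a contradiction with the constraint $I_\omega(v_n) \leq 0$. The idea is that \eqref{GNDs} controls $\|v_n\|_p^p$ by the regular-part gradient and the mass, which together with coercivity forces the $L^p$-norm to be dominated by a superlinear power of the energy $Q_\omega(v_n)$; writing out $Q_\omega(v_n) \leq \|v_n\|_p^p$ and inserting the Gagliardo--Nirenberg bound yields an inequality of the form $Q_\omega(v_n) \leq K\, Q_\omega(v_n)^{\theta}$ with exponent $\theta > 1$ (since $p > 2$ makes the nonlinearity superlinear), which is impossible unless $Q_\omega(v_n)$ is bounded below away from zero. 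This contradicts $Q_\omega(v_n) \to 0$, so $d(\omega) > 0$.

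The main obstacle I anticipate is handling the mixed structure of the Gagliardo--Nirenberg inequality \eqref{GNDs}, which has two terms (one involving $\|\nabla\phi\|_2$ and $\|v\|_2$, the other involving $|q|$ and $\|v\|_2$) with different scaling in the charge $q$ and requires the decomposition at the specific value $\lambda = \varepsilon |q|^4/\|v\|_2^4$. One must verify that \emph{both} terms on the right-hand side are controlled by a superlinear power of $Q_\omega$, keeping careful track of the exponents $\frac{3(p-2)}{2}$, $\frac{6-p}{2}$, $3(p-2)$, and $2(3-p)$ to confirm that the total degree of homogeneity exceeds the degree $2$ of the quadratic form $Q_\omega$ on both pieces; this is precisely where the restriction $p \in (2,3)$ is essential. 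A cleaner route may be to normalize by setting $\widetilde{v}_n = v_n/\|v_n\|_{Q_\omega}$ where $\|\cdot\|_{Q_\omega}$ is the norm induced by the coercive form, reducing to a uniform lower bound $\|v\|_p^p \geq c' \|v\|_{Q_\omega}^p$ on the unit sphere, which then contradicts $I_\omega(v_n)\leq 0$ directly.
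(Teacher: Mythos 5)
Your overall strategy is the right one and is essentially the paper's: for $\omega>{\omega}_{\alpha}$ the form $Q_\omega$ is coercive, an embedding into $L^p(\Rd)$ then gives $\|v\|_p^2\leq C\,Q_\omega(v)$, and combining this with the constraint $I_\omega(v)\leq 0$, i.e. $Q_\omega(v)\leq\|v\|_p^p$, forces $\|v\|_p^{p-2}\geq 1/C$, so that $\widetilde{S}(v)=\frac{p-2}{2p}\|v\|_p^p$ is bounded away from zero on $\widetilde{N}_\omega$. However, as written your argument has a genuine gap exactly at the step you flag as ``the main obstacle''. The only coercivity you actually establish is the spectral bound $Q_\omega(v)\geq(\omega-{\omega}_{\alpha})\|v\|_2^2$, which controls the mass alone, whereas the right-hand side of \eqref{GNDs} involves $\|\na \phi\|_2$ and $|q|$, neither of which is controlled by $\|v\|_2$. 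Hence the claimed chain $Q_\omega(v_n)\leq\|v_n\|_p^p\leq K\,Q_\omega(v_n)^{\theta}$ with $\theta>1$ does not follow from what you have proved: you would first need the full coercivity $Q_\omega(v)\geq c\big(\|\na\phi\|_2^2+\|v\|_2^2+|q|^4\|v\|_2^{-2}\big)$ for the $v$-dependent decomposition at $\la=\ep|q|^4/\|v\|_2^4$. This is true but requires a separate verification (for $\alpha<0$ the term $\alpha|q|^2$ in \eqref{eq-Q} must be absorbed by splitting it, via Young's inequality, between $\omega\|v\|_2^2$ and the quartic term, using $\omega>{\omega}_{\alpha}$), which you do not carry out. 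Your closing ``cleaner route'' also states the needed inequality backwards: what is required is the \emph{upper} bound $\|v\|_p^p\leq c'\|v\|_{Q_\omega}^p$, i.e. continuity of the embedding of $D$ with the coercive norm into $L^p(\Rd)$, not a lower bound.

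The paper closes this gap by a simpler device that you should adopt. After treating $\widetilde{N}_\omega\cap H^1(\Rd)$ with the ordinary Sobolev inequality, for $v=\phi_\la+q\G_\la\in\widetilde{N}_\omega\setminus H^1(\Rd)$ it fixes a single $\la\in({\omega}_{\alpha},\omega)$, for which all four coefficients appearing in \eqref{eq-Inehari} (namely $1$, $\la$, $\omega-\la$ and $\alpha+\frac{\sqrt{\la}}{4\pi}$, the last being positive precisely because $\sqrt{\la}>\sqrt{{\omega}_{\alpha}}=-4\pi\alpha$ when $\alpha<0$) are strictly positive. This yields at once $Q_\omega(v)\geq C\big(\|\phi_\la\|_{H^1}^2+|q|^2\big)$, and the elementary bound $\|v\|_p^p\leq C_p\big(\|\phi_\la\|_{H^1}^p+|q|^p\big)\leq C_p\big(\|\phi_\la\|_{H^1}^2+|q|^2\big)^{p/2}$ replaces \eqref{GNDs} entirely; no contradiction argument and no $\ep|q|^4/\|v\|_2^4$ decomposition are needed. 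If you prefer your route, it is salvageable, but only after supplying the missing coercivity estimate; with the fixed-$\la$ decomposition it is immediate.
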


\begin{proof}
We can start by assuming $u\in \widetilde{N}_\omega\cap H^1(\Rd)$. By the Sobolev inequality
\begin{equation*}
0\geq I_{\omega}(u)=\|\na u\|_{2}^{2}+\omega\|u\|_{2}^{2}-\|u\|_{p}^{p}\geq C_p\|u\|_{p}^{2}+\omega\|u\|_{2}^{2}-\|u\|_{p}^{p}\geq C_p\|u\|_{p}^{2}-\|u\|_{p}^{p},
\end{equation*}
for some suitable $C_p>0$ only depending on $p$. Therefore, $\|u\|_{p}^{p-2}\geq C_p$, so that
\begin{equation}
\label{Spos}
\widetilde{S}(u)\geq \f{p-2}{2p} C_p^{\f{p}{p-2}},
\end{equation}
whence
\begin{equation}
\label{eq-infH1}
 \inf_{v\in\widetilde{N}_\omega\cap H^1(\Rd)}\widetilde{S}(v)\geq\f{p-2}{2p} C_p^{\f{p}{p-2}}>0.
\end{equation}
It is then left to study the case $u=\phi_\la+q\G_{\la}\in \widetilde{N}_\omega\setminus H^1(\Rd)$. Assume, without loss of generality, $\la\in({\omega}_{\alpha},\omega)$. Clearly $\alpha+\f{\sqrt{\la}}{4\pi}>0$, and thus there exists a constant $C>0$ such that
\begin{equation}
\label{Lpartest}
\|\na\phi_{\la}\|_{2}^{2}+\la\|\phi_{\la}\|_{2}^{2}+(\omega-\la)\|u\|_{2}^{2}+|q|^{2}\left(\alpha+\f{\sqrt{\la}}{4\pi}\right)\geq C\left(\|\phi_{\la}\|_{H^{1}}^{2}+|q|^{2}\right).
\end{equation}
In addition, by Sobolev inequality again, 
\begin{equation*}
\|u\|_{p}^{p}\leq C_{p}\left(\|\phi_{\la}\|_{p}^{p}+|q|^{p}\right)\leq C_{p}\left(\|\phi_{\la}\|_{H^1}^{p}+|q|^{p}\right)\leq C_{p}\left(\|\phi_{\la}\|_{H^1}^{2}+|q|^{2}\right)^{\f{p}{2}},
\end{equation*}
which implies
\begin{equation}
\label{up2}
\|\phi_{\la}\|_{H^1}^{2}+|q|^{2}\geq\f{1}{C_{p}}\|u\|_{p}^{2}.
\end{equation}
Then, combining \eqref{Lpartest} and \eqref{up2},
\begin{equation*}
0\geq I_{\omega}(u)\geq C\left(\|\phi_{\la}\|_{H^{1}}^{2}+|q|^{2}\right)-\|u\|_{p}^{p}\geq \f{C}{C_{p}}\|u\|_{p}^{2}-\|u\|_{p}^{p}
\end{equation*}
and thus, as before, there exists $K_p>0$, depending only on $p$, such that
\begin{equation*}
\widetilde{S}(u)\geq K_p,
\end{equation*}
whence
\begin{equation}
\label{infD}
\inf_{v\in\widetilde{N}_\omega\setminus H^1(\Rd)}\widetilde{S}(v)\geq K_p>0.
\end{equation}
Finally, the claim follows by combining \eqref{eq-infH1} and \eqref{infD}.
\end{proof}

The latter point is to prove that the right inequality in \eqref{0dwd0w} is strict. To this aim, we mention the following well-known result for the NLS action minimizers at frequency $\omega$ (see, e.g., \cite{C-CL03}).

\begin{proposition}
\label{exuniqS0}
Let $p\in(2,6)$. For every $\omega > 0$, there exists a minimizer of the NLS action at frequency $\omega$. Such minimizer is unique, positive, radially symmetric and decreasing along the radial direction, up to the multiplication by a constant phase and translations.
\end{proposition}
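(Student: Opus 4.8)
The plan is to treat this as the classical existence and uniqueness of the ground state soliton for the equation $-\lap u+\omega u=|u|^{p-2}u$ in $\Rd$, to which minimizers of the NLS action correspond through the Euler--Lagrange equation. By the scaling $u(x)=\omega^{\f{1}{p-2}}w(\sqrt{\omega}\,x)$, which maps solutions at frequency $\omega$ to solutions at frequency $1$ and preserves positivity, symmetry and monotonicity, one reduces every assertion to the normalized frequency $\omega=1$; so I would fix $\omega=1$ throughout.

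For existence I would minimize $S_\omega^0$ on the Nehari manifold $N_\omega^0$, exploiting the same structure used above for the $\delta$-NLS. The analogue of Lemma \ref{equivprob} lets one replace the constrained minimization of $S_\omega^0$ by the minimization of $\widetilde{S}(v)=\f{p-2}{2p}\|v\|_p^p$ over $\{v\in H^1(\Rd)\setminus\{0\}:I_\omega^0(v)\leq0\}$, and the argument of Proposition \ref{dpos}, with the Gagliardo--Nirenberg inequality \eqref{GNclass} in place of Sobolev, yields $d^0(\omega)>0$. The first key step is \emph{symmetrization}: by the P\'olya--Szeg\H{o} inequality the Schwarz rearrangement $v^{*}$ satisfies $\|\na v^{*}\|_2\leq\|\na v\|_2$, $\|v^{*}\|_2=\|v\|_2$ and $\|v^{*}\|_p=\|v\|_p$, hence $I_\omega^0(v^{*})\leq I_\omega^0(v)$ while $\widetilde{S}$ is unchanged; this allows me to restrict minimizing sequences to radially symmetric, nonincreasing functions.

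The second key step is \emph{compactness}. A minimizing sequence is bounded in $H^1(\Rd)$, and since $2<p<6$ the radial subspace $H^1_{\mathrm{rad}}(\Rd)$ embeds compactly into $L^p(\Rd)$ (Strauss' lemma); I would therefore extract a subsequence converging weakly in $H^1$ and strongly in $L^p$ to some $u$. The strong $L^p$ convergence together with $d^0(\omega)>0$ forces $u\neq0$, the constraint $I_\omega^0(u)\leq0$ passes to the limit, and weak lower semicontinuity of the $H^1$-norm gives $\widetilde S(u)=d^0(\omega)$; by the analogue of \eqref{eq-secondequiv} this produces a genuine minimizer, solving $-\lap u+\omega u=|u|^{p-2}u$. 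Replacing $u$ by $|u|$ leaves the action and the constraint unchanged, so I may take $u\geq0$, and elliptic regularity together with the strong maximum principle upgrade this to $u>0$; radial symmetry and radial monotonicity then follow either from the strict form of the rearrangement inequality or, independently, from the Gidas--Ni--Nirenberg moving-plane method applied to positive decaying solutions.

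The genuinely hard point is \emph{uniqueness}. Existence, positivity and symmetry are soft and follow the scheme above, but proving that the positive radial decreasing solution of $-\lap u+u=u^{p-1}$ is unique is a delicate ODE argument: after passing to the radial variable one studies the associated shooting problem and must control the number of zeros of the solution and its monotone dependence on the initial height, which is exactly the content of Kwong's theorem. I would invoke that result and, via the reduction to $\omega=1$, deduce uniqueness for every $\omega>0$ up to translations and a constant phase, rather than reprove it; this is the step I expect to be the main obstacle were one to insist on a self-contained argument, and the reason the statement is quoted as a known fact.
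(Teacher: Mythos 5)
Your outline is correct, but note that the paper does not prove this proposition at all: it is quoted as a classical fact with a reference to Cazenave's book, exactly as you anticipate in your last sentence. Your sketch (Nehari minimization with Schwarz symmetrization and the compact radial embedding for existence, maximum principle and rearrangement/moving planes for positivity and symmetry, Kwong's theorem for uniqueness, all reduced to $\omega=1$ by scaling) is the standard argument behind that citation, and the one genuinely hard ingredient --- uniqueness --- is correctly identified and correctly delegated to Kwong rather than reproved.
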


\begin{proposition}
\label{comparinf}
For every $p\in(2,3)$, $\alpha\in\R$ and $\omega>{\omega}_{\alpha}$, there results that $d(\omega)<d^{0}(\omega)$.
\end{proposition}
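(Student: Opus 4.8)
The plan is to mirror the strategy of Proposition \ref{Eps<Eps0}: we already know from \eqref{0dwd0w} that $d(\omega)\leq d^0(\omega)$, so it suffices to rule out equality. Since $\omega>{\omega}_{\alpha}\geq0$ forces $\omega>0$, Proposition \ref{exuniqS0} provides the NLS action minimizer at frequency $\omega$, which I denote $u^0$; it is strictly positive. Because ${S_\omega}_{|_{H^1(\Rd)}}=S_\omega^0$ and $N_\omega\cap H^1(\Rd)=N_\omega^0$, we have $u^0\in N_\omega$ with $S_\omega(u^0)=S_\omega^0(u^0)=d^0(\omega)$. Hence $d(\omega)\leq S_\omega(u^0)=d^0(\omega)$ automatically, and $d(\omega)=d^0(\omega)$ holds \emph{if and only if} $u^0$ is itself a minimizer of the $\delta$-NLS action on $N_\omega$. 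The whole matter therefore reduces to showing that $u^0$ \emph{cannot} be a $\delta$-NLS action minimizer.

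To this end I argue by contradiction and assume $u^0$ minimizes $S_\omega$ over $N_\omega$. The crucial observation is that such a constrained minimizer is in fact a free critical point of $S_\omega$, hence a bound state. Indeed, the Lagrange multiplier rule on the Nehari manifold yields $S_\omega'(u^0)=\eta\,I_\omega'(u^0)$ for some $\eta\in\R$; pairing with $u^0$ and using that $\langle S_\omega'(u^0),u^0\rangle=I_\omega(u^0)=0$ together with $\langle I_\omega'(u^0),u^0\rangle=2Q_\omega(u^0)-p\|u^0\|_p^p=(2-p)\|u^0\|_p^p$, which is nonzero since $\|u^0\|_p\neq0$ on $N_\omega$, forces $\eta=0$. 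Thus $S_\omega'(u^0)=0$, i.e. $u^0$ solves the Euler--Lagrange equation \eqref{EL3} and belongs to $D(H_\alpha)$.

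The contradiction then comes from the boundary condition built into $D(H_\alpha)$. Since $u^0\in H^1(\Rd)$, its charge vanishes, so in the decomposition $u^0=\phi_\la+q\G_\la$ one has $q=0$ and $\phi_\la\equiv u^0$; the condition $\phi_\la(0)=(\alpha+\f{\sqrt{\la}}{4\pi})q$ appearing in \eqref{domop}--\eqref{eq-regbound} then reduces to $u^0(0)=0$, contradicting the positivity of $u^0$. Equivalently, and more self-containedly, one may test the weak form of \eqref{EL3} against $\G_\la\in D$ and combine the resulting identity with $(-\lap+\la)\G_\la=\delta_0$ and the NLS equation $-\lap u^0+\omega u^0=|u^0|^{p-2}u^0$ satisfied by $u^0$, which again produces $u^0(0)=0$. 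In either case $u^0$ is excluded, so $d(\omega)<S_\omega(u^0)=d^0(\omega)$, as claimed.

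The step I expect to be the main obstacle is the regularity passage in the second paragraph, namely promoting ``constrained minimizer of $S_\omega$ on $N_\omega$'' to ``bound state lying in $D(H_\alpha)$ and satisfying the boundary condition.'' The multiplier computation only delivers the Euler--Lagrange equation weakly in the dual of the form domain $D$; upgrading this to membership in the operator domain (so that the boundary condition can be read off) rests on the first representation theorem for the quadratic form $Q$. The self-contained alternative via testing against $\G_\la$ avoids domain membership but instead demands the careful distributional computation at the origin; both routes should be spelled out with some care, whereas the remaining bookkeeping with \eqref{0dwd0w} and \eqref{eq-funceq} is routine.
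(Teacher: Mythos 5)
Your argument is correct and is essentially the paper's own: the paper's proof of Proposition \ref{comparinf} simply says it is analogous to the first part of Proposition \ref{Eps<Eps0}, which is exactly your reduction --- the positive NLS minimizer $u^0$ would have to be a $\delta$-NLS action minimizer, hence a bound state with charge $q=0$, forcing $u^0(0)=0$ and contradicting positivity. Your extra care about the Nehari multiplier vanishing and about upgrading the weak Euler--Lagrange equation to membership in $D(H_\alpha)$ fills in details the paper leaves implicit, but the route is the same.
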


\begin{proof}
The proof is analogous to that of the first part of Proposition \ref{Eps<Eps0}.
\end{proof}

Concluding the section, we prove the existence part of Theorem {\ref{exchar-actmin}}. 

\begin{proof}[Proof of Theorem \ref{exchar-actmin}-(i)]
The case $\omega\leq {\omega}_{\alpha}$ is a straightforward consequence of the remarks at the beginning of the section and of Corollary \ref{noexistlowfreq}. On the other hand, in order to treat the case $\omega>{\omega}_{\alpha}$ it is sufficient to use Propositions \ref{dpos} and \ref{comparinf} and follow the steps of the proof of \cite[Theorem 1.11]{ABCT-2d}. We mention here a brief sketch for the sake of completeness.

\emph{Step 1: weak convergence of the minimizing sequences.} Let $(u_{n})_{n}$ be a minimizing sequence of the $\delta$-NLS action  at frequency $\omega>{\omega}_{\alpha}$. By Remark \ref{rem-peq}, it is not restrictive to assume that $(u_{n})_n\subset \widetilde{N}_\omega$ and $\widetilde{S}(u_{n})\to d(\omega)$. Now, since $\|u_{n}\|_{p}^{p}\to \f{2p}{p-2}d(\omega)$, $u_{n}$ is bounded in $L^p(\Rd)$. Hence, as $I_{\omega}(u_{n})\leq 0$, setting for instance $\la=\f{\omega+{\omega}_\alpha}{2}$, 
and using the decomposition $u_{n}=\phi_{n,\la}+q_{n}\G_{\la}$, one gets  that $\phi_{n,\la}$ and $u_{n}$ are bounded in $L^2(\Rd)$ and that $q_{n}$ is bounded in $\C$. Thus, there exists $\phi_{\la}\in H^{1}(\Rd)$, $q\in \C$ and $u\in D$ such that $u=\phi_{\la}+q\G_{\la}$ and
\begin{equation*}
\nabla\phi_{n,\la}\deb\nabla\phi_{\la},\quad\phi_{n,\la}\deb\phi_{\la}\quad u_{n}\deb u \quad\text{in}\quad L^{2}(\Rd)\qquad \text{and} \qquad q_{n}\to q\quad\text{in}\quad\C.
\end{equation*}

\emph{Step 2: $u\in D\setminus H^{1}(\Rd)$.} Suppose, by contradiction, that $u\in H^{1}(\Rd)$, so that $q=0$, and define the sequence $w_{n}:=\sigma_{n}\phi_{n,\la} \in H^{1}(\Rd)$ in such a way that $I^{0}_{\omega}(\sigma_{n}\phi_{n,\la})=0$. It is possible to prove that $(\sigma_n^p)_n$ is bounded from above by a sequence $(a_n)_n$ converging to 1. Thus, as $I^{0}_{\omega}(w_{n})=0$ and $\widetilde{S}(u_{n})\to d(\omega)$,
\begin{equation*}
d^0(\omega)+o(1)=\widetilde{S}(w_{n})=\sigma_{n}^{p}\widetilde{S}\left(\phi_{n,\la}\right)\leq  a_{n}\left(\widetilde{S}(u_{n})+o(1)\right)=\widetilde{S}(u_{n})+o(1)=d(\omega)+o(1),
\end{equation*}
that implies that $d(\omega)\geq d^{0}(\omega)$, which contradicts Proposition \ref{comparinf}. 

\emph{Step 3: $u\in \widetilde{N}_\omega$.} In view of Step 2, it is left to prove that $I_{\omega}(u)\leq 0$. Assume by contradiction that $I_{\omega}(u)>0$. Since $u_n$ is bounded in $L^p(\Rd)$, using Brezis-Lieb lemma,
\begin{equation}
\label{eq-Sbrezis}
\widetilde{S}(u_{n})-\widetilde{S}(u_{n}-u)-\widetilde{S}(u)\to 0.
\end{equation}
Moreover, since $q_{n}\to q$, $\na\phi_{n,\la}\deb\na\phi_{\la}$, $\phi_{n,\la}\deb\phi_{\la}$ and $u_{n}\deb u$ in $L^{2}(\Rd)$  and $Q_\omega$ is quadratic, we have also that
\begin{equation}
\label{eq-brexisI}
I_{\omega}(u_{n})-I_{\omega}(u_{n}-u)-I_{\omega}(u)\to 0.
\end{equation}
Let us prove now that $I_{\omega}(u_{n})\to 0$. Assume by contradiction that $I_{\omega}(u_{n})\not\to 0$. Without loss of generality, we can suppose that $I_{\omega}(u_{n})\to -\beta$, with $\beta>0$. Consider, then, the sequence $v_{n}:=\theta_{n}u_{n}$ such that $I_{\omega}(v_{n})=0$. An easy computation shows that
\begin{equation*}
\theta_{n}\to \ell<1.
\end{equation*}
As a consequence,
\begin{equation*}
\widetilde{S}(v_{n})=\widetilde{S}(\theta_{n}u_{n})=\theta_{n}^{p}\widetilde{S}(u_{n})\to \ell^{p}d(\omega)<d(\omega),
\end{equation*}
which is a contradiction. Hence $I_{\omega}(u_{n})\to 0$. Finally, looking back at \eqref{eq-brexisI}, since $I_{\omega}(u)>0$ and $I_{\omega}(u_{n})\to 0$, 
\begin{equation*}
I_{\omega}(u_{n}-u)=I_{\omega}(u_{n})-I_{\omega}(u)+o(1)=-I_{\omega}(u)+o(1),
\end{equation*}
entailing that $I_{\omega}(u_{n}-u)\to-I_{\omega}(u)<0$. Choose then $\bar{n}$ such that $I_{\omega}(u_{n}-u)<0$ for every $n\geq\bar{n}$. Since $d(\omega)\leq \widetilde{S}(u_{n}-u)$ and $\widetilde{S}(u)>0$, \eqref{eq-Sbrezis} yields
\begin{equation*}
d(\omega)\leq \lim_{n}\widetilde{S}(u_{n}-u)= d(\omega)-\widetilde{S}(u)<d(\omega),
\end{equation*}
which is again a contradiction. Thus, $I_\omega(u)\leq 0$.

\emph{Step 4: conclusion.} By the boundedness in $L^p(\Rd)$, $u_n\deb u $ in $L^p(\Rd)$, and so, by weak lower semicontinuity
\begin{equation*}
\widetilde{S}(u)\leq\liminf_{n\to+\infty} \widetilde{S}(u_{n})=d(\omega),
\end{equation*}
which concludes the proof.
 \end{proof}
 
%%%%%%%%%%%%%%%%%%%%%%%%%%%%%%%%%%%%%%%%%%%%%%%%%%%%%%%%%%%%%%%%%%%%%%%%%%%%%%%%
%%%%%%%%%%%%%%%%%%%%%%%%%%%%%%%%%%%%%%%%%%%%%%%%%%%%%%%%%%%%%%%%%%%%%%%%%%%%%%%%
%%%%%%%%%%%%%%%%%%%%%%%%%%%%%%%%%%%%%%%%%%%%%%%%%%%%%%%%%%%%%%%%%%%%%%%%%%%%%%%%
%%%%%%%%%%%%%%%%%%%%%%%%%%%%%%%%%%%%%%%%%%%%%%%%%%%%%%%%%%%%%%%%%%%%%%%%%%%%%%%%
%%%%%%%%%%%%%%%%%%%%%%%%%%%%%%%%%%%%%%%%%%%%%%%%%%%%%%%%%%%%%%%%%%%%%%%%%%%%%%%%
%%%%%%%%%%%%%%%%%%%%%%%%%%%%%%%%%%%%%%%%%%%%%%%%%%%%%%%%%%%%%%%%%%%%%%%%%%%%%%%%
%%%%%%%%%%%%%%%%%%%%%%%%%%%%%%%%%%%%%%%%%%%%%%%%%%%%%%%%%%%%%%%%%%%%%%%%%%%%%%%%
%%%%%%%%%%%%%%%%%%%%%%%%%%%%%%%%%%%%%%%%%%%%%%%%%%%%%%%%%%%%%%%%%%%%%%%%%%%%%%%%
%%%%%%%%%%%%%%%%%%%%%%%%%%%%%%%%%%%%%%%%%%%%%%%%%%%%%%%%%%%%%%%%%%%%%%%%%%%%%%%%
%%%%%%%%%%%%%%%%%%%%%%%%%%%%%%%%%%%%%%%%%%%%%%%%%%%%%%%%%%%%%%%%%%%%%%%%%%%%%%%%
%%%%%%%%%%%%%%%%%%%%%%%%%%%%%%%%%%%%%%%%%%%%%%%%%%%%%%%%%%%%%%%%%%%%%%%%%%%%%%%%
%%%%%%%%%%%%%%%%%%%%%%%%%%%%%%%%%%%%%%%%%%%%%%%%%%%%%%%%%%%%%%%%%%%%%%%%%%%%%%%%

%%%%%%%%%%%%%%%%%%%%%%%%%%%%%%%%%%
%%%%%%% Characterization %%%%%%%%%
%%%%%%%%%%%%%%%%%%%%%%%%%%%%%%%%%%

\section{Further properties of ground states and action minimizers}
\label{sec-char}

This final section discusses point (ii) of Theorems \ref{exchar-gs} and \ref{exchar-actmin}. We start by proving (ii)(a) of Theorem \ref{exchar-actmin}.

\begin{proposition}
\label{GSnornos}
Let $p\in(2,3)$, $\alpha \in \R$, $\omega>{\omega}_{\alpha}$ and $u$ be a minimizer of the $\delta$-NLS action at frequency $\omega$. Then, $q\neq0$ and $\phi_\la:=u-q\G_\la\neq 0$, for every $\la>0$.
\end{proposition}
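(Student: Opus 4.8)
The plan is to prove the two assertions separately, each by contradiction: first that the charge does not vanish, and then that no value of $\la$ can annihilate the regular part. For $q\neq0$ I would argue variationally. Assume $q=0$, i.e. $u\in H^1(\Rd)$. Since $S_\omega$ restricted to $H^1(\Rd)$ equals $S_\omega^0$ and $N_\omega\cap H^1(\Rd)=N_\omega^0$ (the same identities already exploited to obtain \eqref{0dwd0w}), the function $u$ belongs to $N_\omega^0$ and satisfies $S_\omega^0(u)=S_\omega(u)=d(\omega)$. Hence $d^0(\omega)\le S_\omega^0(u)=d(\omega)$, contradicting the strict inequality $d(\omega)<d^0(\omega)$ of Proposition \ref{comparinf}. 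Therefore $q\neq0$.

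For the regular part, the first step is to show that a minimizer solves the Euler--Lagrange equation \eqref{EL3} in weak form. By definition $u\in N_\omega$, so $I_\omega(u)=0$, and $u$ minimizes $S_\omega$ over $N_\omega$. Because
\[
\langle I_\omega'(u),u\rangle=2Q_\omega(u)-p\|u\|_p^p=(2-p)\|u\|_p^p\neq0,
\]
the Nehari manifold $N_\omega$ is a smooth constraint at $u$ and the Lagrange multiplier must vanish, giving $S_\omega'(u)=0$; that is,
\[
Q_\omega(u,w)=\Re\langle|u|^{p-2}u,w\rangle,\qquad\forall\,w\in D,
\]
where $Q_\omega(\cdot,\cdot)$ denotes the bilinear form polarizing \eqref{Qomega}.

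Suppose now, for contradiction, that $\phi_{\la_0}=0$ for some $\la_0>0$, so that $u=q\G_{\la_0}$ with $q\neq0$. Decomposing test functions at the same value $\la_0$ and choosing $w=\phi\in H^1(\Rd)$ (charge zero), the gradient contribution to $Q$ disappears because the regular part of $u$ is zero, and the charge term $(\alpha+\tfrac{\sqrt{\la_0}}{4\pi})q\,\overline{q_\phi}$ in \eqref{eq-Q} disappears because $q_\phi=0$; what remains is $Q_\omega(u,\phi)=(\omega-\la_0)\langle u,\phi\rangle$. The weak equation then reads $(\omega-\la_0)\langle u,\phi\rangle=\langle|u|^{p-2}u,\phi\rangle$ for all $\phi\in H^1(\Rd)$, i.e. $(\omega-\la_0)u=|u|^{p-2}u$ almost everywhere. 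Since $u=q\G_{\la_0}$ does not vanish on $\Rd\setminus\{0\}$, this forces $|u|^{p-2}\equiv\omega-\la_0$ to be constant there; but $|u|=|q|\G_{\la_0}$ is radially strictly decreasing and $p-2>0$, a contradiction. Hence $\phi_\la\neq0$ for every $\la>0$.

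The step to execute most carefully is the evaluation $Q_\omega(q\G_{\la_0},\phi)=(\omega-\la_0)\langle q\G_{\la_0},\phi\rangle$: from the polarized form of \eqref{eq-Q}, with the decomposition taken at $\la_0$, one must check that only the term coming from $\la(\|\phi_\la\|_2^2-\|v\|_2^2)$ survives and combines with $\omega\|v\|_2^2$ to yield the factor $\omega-\la_0$. The only conceptual point is the vanishing of the Lagrange multiplier, which is routine once the non-degeneracy $\langle I_\omega'(u),u\rangle\neq0$ has been recorded; notably, no elliptic regularity nor membership of $u$ in $D(H_\alpha)$ is required, since the weak equation tested against $H^1(\Rd)$ already produces the contradiction.
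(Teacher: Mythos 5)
Your proposal is correct. The first half (that $q\neq0$) coincides with the paper's argument: if $q=0$ then $u\in N_\omega^0$ and $S_\omega^0(u)=S_\omega(u)=d(\omega)$, forcing $d(\omega)=d^0(\omega)$ against Proposition \ref{comparinf}. For the second half you take a genuinely different route. The paper assumes $\phi_{\la}=0$ and first invokes membership of $u$ in the operator domain, so that the boundary condition $\phi_\la(0)=(\alpha+\tfrac{\sqrt\la}{4\pi})q$ in \eqref{eq-regbound} forces $\alpha+\tfrac{\sqrt\la}{4\pi}=0$; this kills the case $\alpha\geq0$ outright and pins $\la=\omega_\alpha$ when $\alpha<0$, after which the strong equation \eqref{EL3} evaluated on $u=q\G_{\omega_\alpha}$ gives a pointwise identity that cannot hold on all of $\Rd\setminus\{0\}$. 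You instead never leave the form domain: you record the vanishing of the Lagrange multiplier (correctly, via $\langle I_\omega'(u),u\rangle=(2-p)\|u\|_p^p\neq0$), obtain the weak Euler--Lagrange equation, and test it against $H^1(\Rd)$ functions, for which the polarized form indeed collapses to $(\omega-\la_0)\langle u,\phi\rangle$ exactly as you compute; the resulting identity $(\omega-\la_0)u=|u|^{p-2}u$ fails because $|q|\G_{\la_0}$ is strictly radially decreasing. Your version is uniform in the sign of $\alpha$ and dispenses with the regularity step, implicit in the paper, that upgrades an action minimizer to an element of $D(H_\alpha)$ satisfying \eqref{eq-regbound}--\eqref{EL3}; the paper's version is shorter once that upgrade is taken for granted, since for $\alpha\geq0$ the boundary condition alone yields the contradiction.
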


\begin{proof}
Consider the decomposition $u=\phi_\la+q\G_\la$ for a fixed $\la>0$, and suppose by contradiction that $\phi_{\la}=0$. As $u\neq 0$, it must be $q\neq 0$. In addition, $u$ must also fulfil \eqref{eq-regbound}, so that $\alpha+\f{\sqrt{\la}}{4\pi}=0$. Now, whenever $\alpha\geq 0$, this is  a contradiction. On the other hand, when $\alpha<0$, the previous equality entails that $\la={\omega}_\alpha$. However, since $u$ must also satisfy \eqref{EL3}, easy computations yield
\[
 \omega-{\omega}_\alpha+|q|^{p-2}|\G_{{\omega}_\alpha}(x)|^{p-2}=0,\qquad\forall x\in\Rd\setminus\{0\},
\]
which is a contradiction.

It is then left to show that  $q\neq0$. Suppose by contradiction that $q=0$. This would imply that $d(\omega)=d^{0}(\omega)$, which denies Proposition \ref{comparinf}.
\end{proof}

In order to prove point (ii)(b) of Theorem \ref{exchar-actmin}, we recall preliminarily that, up to the multiplication by a phase factor, a $\delta$-NLS action minimizer $u=\phi_{\la}+q\G_{\la}$ can be assumed to display a charge $q>0$ (for details see \cite[Section 5]{ABCT-2d}). In addition, we have to turn to the following equivalent minimum problem (for details see \cite[Proposition 5.3]{ABCT-2d}).

\begin{proposition}
\label{equivprob3}
Let $p\in(2,3)$, $\alpha\in\R$ and $\omega>{\omega}_{\alpha}$. Then,
\begin{equation}
\inf_{v\in D_{C(\omega)}^p}Q_{\omega}(v)=C(\omega),
\end{equation}
with $C(\omega):=\f{2p}{p-2}d(\omega)$ and $D_{C(\omega)}^p:=\{v\in D:\|v\|_p^p=C(\omega)\}$, and there exists $u\in D_{C(\omega)}^p$ such that $Q_{\omega}(u)=C(\omega)$. In particular,
\begin{equation}
\label{eq-fineq}
 \left\{
 \begin{array}{l}
  \displaystyle Q_{\omega}(w)=C(\omega)\\[.2cm]
  \displaystyle w\in D_{C(\omega)}^p
 \end{array}
 \right.
 \qquad\Longleftrightarrow\qquad
 \left\{
 \begin{array}{l}
  \displaystyle S_{\omega}(w)=d(\omega)\\[.2cm]
  \displaystyle w\in N_{\omega}
 \end{array}
 \right..
\end{equation}
\end{proposition}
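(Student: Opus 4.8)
The goal is to prove Proposition \ref{equivprob3}, which asserts the equivalence between the constrained minimization of $Q_\omega$ on the $L^p$-sphere of radius $C(\omega)$ and the action minimization on the Nehari manifold, together with the attainment of the $Q_\omega$-infimum.

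The plan is to exploit the scaling homogeneity that links $Q_\omega$ (quadratic) and the $L^p$-norm ($p$-homogeneous). First I would establish the variational identity relating the two problems. Given any $v\in D\setminus\{0\}$, I consider the rescaling $t\mapsto tv$ and note that $I_\omega(tv)=t^2 Q_\omega(v)-t^p\|v\|_p^p$; since $Q_\omega(v)>0$ for $\omega>\omega_\alpha$ (which follows from Proposition \ref{dpos} applied through \eqref{eq-funceq}, or directly since $\omega$ exceeds the bottom of the spectrum), there is a unique $t^*(v)>0$ with $I_\omega(t^*v)=0$, namely $t^*(v)=\left(Q_\omega(v)/\|v\|_p^p\right)^{\f{1}{p-2}}$. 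Evaluating $\widetilde S(t^*v)=\f{p-2}{2p}\|t^*v\|_p^p$ and minimizing over the projective class, the quantity to be minimized on the Nehari manifold becomes, after substitution, a ratio involving $Q_\omega(v)^{\f{p}{p-2}}/\|v\|_p^{\f{2p}{p-2}}$. This shows that minimizing $\widetilde S$ over $N_\omega$ is equivalent to minimizing the scale-invariant Weinstein-type quotient $J(v):=Q_\omega(v)^{\f{p}{p-2}}/\|v\|_p^{\f{2p}{p-2}}$, and the latter is in turn equivalent, by fixing the $L^p$-normalization $\|v\|_p^p=C(\omega)$, to minimizing $Q_\omega$ on $D_{C(\omega)}^p$.

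The second step is the explicit bookkeeping of the constant. One shows that the minimal value of $Q_\omega$ over the sphere $\{\|v\|_p^p=C(\omega)\}$ equals precisely $C(\omega)=\f{2p}{p-2}d(\omega)$: the scaling-invariance forces the minimizer of $J$ to lie, after a suitable dilation, on both the Nehari manifold and the $L^p$-sphere simultaneously, and on that common locus the relation $Q_\omega(u)=\|u\|_p^p$ (equivalently $I_\omega(u)=0$) gives $Q_\omega(u)=C(\omega)$ directly from $\|u\|_p^p=C(\omega)$. This yields the two-way implication \eqref{eq-fineq}: if $w\in D_{C(\omega)}^p$ realizes $Q_\omega(w)=C(\omega)$, then $I_\omega(w)=Q_\omega(w)-\|w\|_p^p=0$ so $w\in N_\omega$, and $S_\omega(w)=\f12 Q_\omega(w)-\f1p\|w\|_p^p=\f{p-2}{2p}C(\omega)=d(\omega)$; conversely, if $S_\omega(w)=d(\omega)$ with $w\in N_\omega$, then $\|w\|_p^p=Q_\omega(w)$ and \eqref{eq-funceq} forces $\|w\|_p^p=\f{2p}{p-2}d(\omega)=C(\omega)$, whence $Q_\omega(w)=C(\omega)$ as well. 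The attainment statement then transfers from the existence of action minimizers already proved in Theorem \ref{exchar-actmin}-(i): any $\delta$-NLS action minimizer $u$, which exists for $\omega>\omega_\alpha$, satisfies both sides of \eqref{eq-fineq} and thus provides the required $u\in D_{C(\omega)}^p$ with $Q_\omega(u)=C(\omega)$.

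The main obstacle is ensuring that the scaling argument is airtight on the nonstandard form domain $D$: one must verify that the rescaling $v\mapsto tv$ keeps $v$ in $D$ with the same charge scaling as $q\mapsto tq$ (it does, since $D$ is a linear space and $tv=t\phi_\la+(tq)\G_\la$), and that $Q_\omega$ is genuinely strictly positive on $D\setminus\{0\}$ for $\omega>\omega_\alpha$ so that $t^*(v)$ is well defined and finite. The positivity is exactly the content guaranteed by the coercivity estimates of the Gagliardo–Nirenberg type in Proposition \ref{GNsing} together with the definition \eqref{bott-spect} of $\omega_\alpha$; once this is in hand, the argument is a clean adaptation of the classical Nehari–Weinstein correspondence, and indeed the statement and its proof mirror \cite[Proposition 5.3]{ABCT-2d}, so the remaining verifications are routine.
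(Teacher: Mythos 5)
Your proposal is correct and follows essentially the approach the paper intends: the paper gives no proof here, deferring to \cite[Proposition 5.3]{ABCT-2d}, whose argument is exactly the Nehari--Weinstein scaling correspondence you describe (unique projection $t^*(v)$ onto $N_\omega$, reduction to the scale-invariant quotient $Q_\omega(v)^{\f{p}{p-2}}/\|v\|_p^{\f{2p}{p-2}}$, algebraic verification of \eqref{eq-fineq}, and attainment transferred from Theorem \ref{exchar-actmin}--(i), which is legitimately available since it is proved independently in Section \ref{sec-am}). The only cosmetic remark is that the strict positivity of $Q_\omega$ on $D\setminus\{0\}$ needs no Gagliardo--Nirenberg input: it is immediate from the definition \eqref{bott-spect} of $\omega_\alpha$, since $Q_\omega(v)\geq(\omega-\omega_\alpha)\|v\|_2^2>0$.
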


\begin{remark}
\label{GSequiv}
The main consequence of this result is that we can study the properties of $\delta$-NLS action minimizers at frequency $\omega$ by studying the properties of the minimizers of $Q_\omega$ on $D_{C(\omega)}^p$, with $C(\omega)=\f{2p}{p-2}d(\omega)$.
\end{remark}

Now we can prove the first part of (ii)(b), which states positivity up to multiplication by a constant phase.

\begin{proposition}
\label{realposgs}
Let $p\in(2,3)$, $\alpha\in\R$ and $\omega>{\omega}_{\alpha}$.
Then, every minimizer of the $\delta$-NLS action at frequency $\omega$ is positive, up to multiplication by a constant phase.
\end{proposition}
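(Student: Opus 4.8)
The plan is to prove positivity in three stages: first reduce to showing that a minimizer can be taken real and nonnegative, then upgrade nonnegativity to strict positivity, and finally argue that any minimizer must coincide with such a positive one up to a constant phase. By Remark \ref{GSequiv}, I work instead with a minimizer $u=\phi_\la+q\G_\la$ of $Q_\omega$ on $D^p_{C(\omega)}$, which is equivalent to minimizing the action on $N_\omega$.

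First I would exploit the diamagnetic-type structure of $Q_\omega$. Writing $u=\phi_\la+q\G_\la$ and recalling the form \eqref{eq-Q} of $Q$, observe that $Q_\omega(u)$ depends on $\phi_\la$ only through $\|\na\phi_\la\|_2^2$ and $\|\phi_\la\|_2^2$, on $u$ through $\|u\|_2^2$, and on the charge through $|q|^2$. Passing from $\phi_\la$ to $|\phi_\la|$ does not increase $\|\na\phi_\la\|_2^2$ (by the standard inequality $\|\na|\phi_\la|\|_2\le\|\na\phi_\la\|_2$), leaves $\|\phi_\la\|_2^2$ unchanged, and replacing $q$ by $|q|$ leaves $|q|^2$ unchanged while the constraint $\|u\|_p^p=C(\omega)$ is preserved or improved under $|u|$. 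I would therefore first replace $u$ by $|u|$ and show that this is an admissible competitor (possibly after renormalizing the $L^p$ constraint, which only lowers $Q_\omega$) whose value of $Q_\omega$ does not exceed that of $u$. Since $u$ is a minimizer, all these inequalities must be equalities, forcing $u$ to be real and nonnegative up to a constant phase. The delicate bookkeeping here is the decomposition: one must check that $|u|$ still lies in $D$ and that its regular part has the claimed properties, which I expect to handle via the fixed representation at a single $\la$ and the fact established in Proposition \ref{GSnornos} that $q\neq0$ and $\phi_\la\not\equiv0$.

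Next, to upgrade a nonnegative minimizer to a strictly positive one, I would invoke the Euler--Lagrange equation: a minimizer of $Q_\omega$ on $D^p_{C(\omega)}$ satisfies, after rescaling, the bound state equation \eqref{EL3}, equivalently $H_\alpha u+\omega u=|u|^{p-2}u$. Away from the origin this reads $-\Delta\phi_\la-q\la\G_\la+\omega u=u^{p-1}$ in the distributional sense on $\Rd\setminus\{0\}$, which is an elliptic equation with nonnegative solution $u\ge0$. By elliptic regularity and the strong maximum principle applied on $\Rd\setminus\{0\}$, either $u\equiv0$ on a component or $u>0$ there; since $q>0$ the singular part $q\G_\la$ is strictly positive near the origin and dominates, so $u$ cannot vanish identically, and the maximum principle then yields $u>0$ throughout $\Rd\setminus\{0\}$. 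The main obstacle I anticipate is justifying the strong maximum principle in the presence of the Coulombian singularity at the origin: one cannot apply it across $x=0$ directly, so I would run the argument on the punctured space and use the explicit lower bound $u\ge q\G_\la>0$ coming from positivity of the charge to rule out degenerate cases.

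Finally, to conclude that \emph{every} minimizer is positive up to a phase, I would argue that the minimizer is essentially unique in phase. Given any minimizer $u$, the first step produces the real nonnegative competitor $|u|$ with the same value of $Q_\omega$, hence $|u|$ is itself a minimizer; equality in the diamagnetic inequality $\|\na|\phi_\la|\|_2=\|\na\phi_\la\|_2$ forces $\phi_\la$ to have constant phase wherever it is nonzero, and matching this with the constant phase forced on $q$ through the boundary condition $\phi_\la(0)=(\alpha+\tf{\sqrt\la}{4\pi})q$ shows that $u$ equals $e^{i\theta}|u|$ for a single constant $\theta$. Combining with the strict positivity of $|u|$ established above completes the proof. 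I would lean on the detailed phase analysis of \cite[Section 5]{ABCT-2d}, which treats exactly this reduction in the two-dimensional case, adapting it to the three-dimensional embedding constraints $p<3$.
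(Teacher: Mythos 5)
Your overall strategy coincides with the paper's (pass to the equivalent minimization of $Q_\omega$ at fixed $L^p$ norm via Proposition \ref{equivprob3}, symmetrize the decomposition, rescale to restore the constraint, and contradict minimality), but two of your steps have concrete gaps and one is superfluous. The main gap is that you conflate the competitor $|u|$ with the competitor $|\phi_\la|+|q|\G_\la$: these are different functions, and only the second is directly usable. It is not clear that $|u|$ belongs to $D$, and even granting that, its regular part is $|u|-|q|\G_\la$, to which the diamagnetic inequality $\|\na|\phi_\la|\|_2\leq\|\na\phi_\la\|_2$ says nothing. The paper instead normalizes $q>0$ by a constant phase, fixes $\la=\omega$, and uses $\widetilde{u}:=|\phi_\omega|+q\G_\omega$, which is manifestly in $D$ with regular part $|\phi_\omega|$ and charge $q$; the $L^p$ comparison then comes from the pointwise identity $|u|^2=|\phi_\omega|^2+q^2\G_\omega^2+2\cos(\eta)|\phi_\omega|\G_\omega\leq|\widetilde{u}|^2$, which is \emph{strict} wherever the phase $\eta$ of $\phi_\omega$ is nonzero. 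That strictness in the $L^p$ term is what produces the contradiction after rescaling by $\beta<1$; your third stage instead leans on the equality case of the diamagnetic inequality, which only forces a constant phase on each connected component of $\{\phi_\la\neq0\}$ and is therefore weaker than what you need. Relatedly, the choice $\la=\omega$ is not cosmetic: for general $\la$ the form $Q_\omega$ contains the term $(\omega-\la)\|v\|_2^2$, and $\|v\|_2^2$ increases when passing from $\phi_\la+q\G_\la$ to $|\phi_\la|+q\G_\la$, so for $\la<\omega$ the inequality $Q_\omega(\widetilde{u})\leq Q_\omega(u)$ may fail; at $\la=\omega$ that term cancels and $Q_\omega$ depends on the decomposition only through $\|\na\phi_\omega\|_2^2$, $\|\phi_\omega\|_2^2$ and $|q|^2$. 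Your write-up asserts the monotonicity of $Q_\omega$ under the replacement without addressing this.

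Your second stage (elliptic regularity plus the strong maximum principle on the punctured space) is unnecessary: once one knows $\phi_\omega\geq0$ and $q>0$, strict positivity of $u$ on $\Rd\setminus\{0\}$ is immediate from $u=\phi_\omega+q\G_\omega\geq q\G_\omega>0$ — a bound you mention in passing but then bury under a maximum-principle argument that would itself require care at the Coulombian singularity. The paper needs no PDE input at this point; the Euler--Lagrange equation \eqref{EL3} plays no role in the proof of positivity.
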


\begin{proof}
Let $u$ be a minimizer of the $\delta$-NLS action  at frequency $\omega$. As mentioned before, up to multiplication by a constant phase, we can assume $q>0$. On the other hand, by Proposition \ref{equivprob3}, it is also a minimizer of $Q_{\omega}$ on $D_{C(\omega)}^p$, with $C(\omega)=\f{2p}{p-2}d(\omega)$. Now, set $\la=\omega$ and $\Omega:=\{x\in\Rd:\phi_\omega(x)\neq0\}$. By Proposition \ref{GSnornos}, $|\Omega|>0$. As a consequence
\begin{equation*}
u(x)=\phi_{\omega}(x)+q\G_{\omega}(x)=e^{i\eta(x)}|\phi_{\omega}(x)|+q\G_{\omega}(x),\qquad\forall x\in\Omega\setminus\{0\},
\end{equation*}
for some $\eta:\Omega\to[0,2\pi)$. Hence, showing that $\eta(x)=0$ for a.e. $x\in\Omega\setminus\{0\}$ implies that $\phi_\omega(x)=|\phi_\omega(x)|\geq0$ for every $x\in\Rd$, whence $u(x)>0$ for every $x\in\Rd\setminus\{0\}$.

Suppose by contradiction that $\eta\neq 0$ on $\Omega_1\subset(\Omega\setminus\{0\})$, with $|\Omega_1|>0$ and define $\widetilde{u}:=|\phi_{\omega}|+q\G_{\omega}$ (which coincides with $u$ in $\Rd\setminus\Omega_1$). Easy computations yield
\begin{multline*}
|u(x)|^{2}=|\phi_{\omega}(x)|^{2}+q^{2}\G_{\omega}^2(x)+2\cos(\eta(x))|\phi_{\omega}(x)|\G_{\omega}(x)\\[.2cm]
<|\phi_{\omega}(x)|^{2}+q^{2}\G_{\omega}^2(x)+2|\phi_{\omega}(x)|\G_{\omega}(x)=|\widetilde{u}(x)|^{2},\qquad\forall x\in\Omega_1,
\end{multline*}
so that, since $|\Omega_1|>0$,
\begin{equation}
\label{pnormstrict}
\|u\|_{p}^{p}=\int_{\Rd}\left(|u|^{2}\right)^{\f{p}{2}}\dx<\int_{\Rd}\left(|\widetilde{u}|^{2}\right)^{\f{p}{2}}\dx=\|\widetilde{u}\|_{p}^{p}.
\end{equation}
On the other hand, one can simply verify that $Q_{\omega}(\widetilde{u})\leq Q_{\omega}(u)$. Thus, from \eqref{pnormstrict} and the positivity of $Q_\omega$, there exists $\beta\in(0,1)$ such that $\|\beta\widetilde{u}\|_{p}^{p}=\|u\|_{p}^{p}=C(\omega)$ and
\begin{equation*}
Q_{\omega}(\beta\widetilde{u})=\beta^{2}Q_{\omega}(\widetilde{u})<Q_{\omega}(u),
\end{equation*}
which contradicts the fact that $u$ minimizes $Q_\omega$ on $D_{C(\omega)}^p$. As a consequence $\eta=0$ a.e. on $\Omega\setminus\{0\}$, which concludes the proof.
\end{proof}

Note that the arguments before also imply that the regular part of a minimizer of the $\delta$-NLS action  at frequency $\omega$ is nonnegative when $\la=\omega$. In addition, we can prove that, whenever $\la>\omega$, it is in fact positive.

\begin{corollary}
\label{regpos}
Let $p\in(2,3)$, $\alpha\in\R$, $\omega>{\omega}_{\alpha}$ and $u$ be a minimizer of the $\delta$-NLS action at frequency $\omega$. Then the regular part $\phi_{\la}:=u-q\G_\la$ is positive for every $\la>\omega$, up to  multiplication by a constant phase.
\end{corollary}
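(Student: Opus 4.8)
The plan is to deduce the statement almost entirely from the positivity already established in Proposition~\ref{realposgs} together with the monotonicity of the Green's functions recorded in \eqref{Gla<Gnu}, so that essentially no new variational work is needed. First I would fix the phase so that, by Proposition~\ref{realposgs} and the normalization discussed before its proof, the minimizer satisfies $u>0$ with charge $q>0$; moreover, as observed right after that proof, the regular part is nonnegative at the distinguished value $\la=\omega$, that is $\phi_\omega\ge 0$. The passage to a generic minimizer is then handled by absorbing the constant phase fixed here, exactly as in the statement.

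The crucial observation is that the charge $q$ is intrinsic to $u$ and does not depend on the value of $\la$ used to split $u$ into regular and singular part (this is precisely the remark made after \eqref{eq-actH}). Comparing the two decompositions $u=\phi_\la+q\G_\la=\phi_\omega+q\G_\omega$ therefore yields the pointwise identity
\begin{equation*}
 \phi_\la=\phi_\omega+q\,(\G_\omega-\G_\la).
\end{equation*}
Since $\la>\omega$, \eqref{Gla<Gnu} gives $\G_\la(x)<\G_\omega(x)$ for every $x\in\Rd\setminus\{0\}$, so the second summand is strictly positive away from the origin; combined with $\phi_\omega\ge 0$ and $q>0$, this already forces $\phi_\la(x)>0$ for all $x\neq 0$.

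The only point requiring a little care — and the step I regard as the main (admittedly minor) obstacle — is the behaviour at the origin, where both $\G_\omega$ and $\G_\la$ blow up and the sign of $\phi_\la(0)$ is not immediate from the inequality above. Here I would use that $\phi_\la\in H^2(\Rd)$ is continuous and that the singular parts cancel in the difference: from \eqref{eq-green},
\begin{equation*}
 \big(\G_\omega-\G_\la\big)(x)=\f{e^{-\sqrt{\omega}|x|}-e^{-\sqrt{\la}|x|}}{4\pi|x|}\xrightarrow[x\to 0]{}\f{\sqrt{\la}-\sqrt{\omega}}{4\pi}>0,
\end{equation*}
so that $\G_\omega-\G_\la$ extends to a continuous, strictly positive function on all of $\Rd$. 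Evaluating the identity at $x=0$ then gives $\phi_\la(0)=\phi_\omega(0)+q\,(\G_\omega-\G_\la)(0)>0$, which upgrades positivity to the whole space and closes the argument.
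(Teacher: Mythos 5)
Your argument is correct and is essentially identical to the paper's own proof: both decompose $\phi_\la=\phi_\omega+q(\G_\omega-\G_\la)$, invoke \eqref{Gla<Gnu} together with $q>0$ and $\phi_\omega\geq 0$ to get positivity away from the origin, and then use the limit $(\G_\omega-\G_\la)(x)\to\f{\sqrt{\la}-\sqrt{\omega}}{4\pi}>0$ as $x\to 0$ to handle the origin. Your extra remark that $\phi_\la\in H^2(\Rd)$ is continuous (justifying the evaluation at $x=0$) is a welcome, slightly more careful touch, but the route is the same.
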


\begin{proof}
Let $u$ be a positive minimizer of the $\delta$-NLS action at frequency $\omega$ and consider the decomposition $u=\phi_\la+q\G_\la$ for a fixed $\la>\omega$. By \eqref{Gla<Gnu}, since $q>0$, one obtains
\begin{equation*}
\phi_\la(x)=\phi_\omega(x)+q(\G_{\omega}(x)-\G_\la(x))>0,\qquad\forall x\in\Rd\setminus\{0\}.
\end{equation*}
On the other hand, as
\begin{equation*}
\lim_{x\to 0}(\G_\omega-\G_{\la})(x)=\f{\sqrt{\la}-\sqrt{\omega}}{4\pi},
\end{equation*}
the claim is proved.
\end{proof}

The last part of the section is devoted to the radially symmetric monotonicity of minimizers of the $\delta$-NLS action. To this aim, we recall the definition and some important properties of the radially decreasing rearrangement of a function.

Given a measurable $A\subset \R^{3}$ with finite Lebesgue measure, we denote by $A^*$ the open ball centered at zero with Lebesgue measure equal to $|A|$, that is
\begin{equation*}
A^*:=\left\{x\in \R^{3}:\f{4\pi}{3}|x|^3<|A|\right\}.
\end{equation*}
In addition, given $f:\Rd\to\R$ a nonnegative measurable function such that $|\{f>t\}|:=|\{\x\in\Rd:f(\x)>t\}|<+\infty$, for every $t>0$, its radially symmetric rearrangement $f^*:\Rd\to\R$ is defined as
\begin{equation}
f^*(\x)=\int_0^{\infty} \mathds{1}_{\{f>t\}^*}(\x)\dt,
\end{equation}
with $\mathds{1}_{\{f>t\}^*}$ the characteristic function of $\{f>t\}^*$. Concerning such radially symmetric rearrangement we need in the sequel the three properties below. First,
\begin{equation}
\label{equimeas}
\|f^*\|_p=\|f\|_p,\qquad\forall f\in L^p(\Rd),\: f\geq0,\quad \forall p\geq1.
\end{equation}
Second, given two nonnegative functions $f$, $g\in L^p(\Rd)$, with $p>1$, there results 
\begin{equation}
\label{ineqf+g}
\int_{\Rd} |f+g|^p \dx\le \int_{\Rd} |f^*+g^*|^p\dx
\end{equation} 
and, in particular, if $f$ is radially symmetric and strictly decreasing along the radial direction, then the equality in \eqref{ineqf+g} implies that $g=g^*$ a.e. on $\Rd$ (see \cite[Proposition 2.3]{ABCT-2d} for the proof). Finally, if $f\in H^1(\Rd)$, then $f^*\in H^1(\Rd)$ and
\begin{equation}
\label{PS}
\|\na f^*\|_2\leq \|\na f\|_2
\end{equation}
(which is usually called \emph{P\'olya-Szeg\H{o} inequality}).

Using these three properties, we can establish the following proposition.
 
\begin{proposition}
\label{minsymm}
Let $p\in(2,3)$, $\alpha\in\R$ and $\omega>{\omega}_{\alpha}$. Then,
every minimizer of the $\delta$-NLS action  at frequency $\omega$ is radially symmetric and decreasing along the radial directions, up to multiplication by a constant phase.
\end{proposition}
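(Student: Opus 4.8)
The plan is to exploit Remark \ref{GSequiv}, which reduces the study of action minimizers to the study of minimizers of $Q_\omega$ on $D^p_{C(\omega)}$. Let $u=\phi_\omega+q\G_\omega$ be such a minimizer; by Proposition \ref{realposgs} we may assume $u>0$, $q>0$ and, by the remark after its proof, $\phi_\omega\geq0$. I would fix the decomposition at $\lambda=\omega$ since this is the value for which $\G_\lambda=\G_\omega$ appears naturally in the action, and because at this choice the term $\la\big(\|\phi_\la\|_2^2-\|v\|_2^2\big)$ in $Q$ simplifies the analysis of the regular part.

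First I would introduce the radially decreasing rearrangement $\phi_\omega^*$ of the (nonnegative) regular part and set $\widetilde u:=\phi_\omega^*+q\G_\omega$. The key observation is that $\G_\omega$ is \emph{already} radially symmetric and strictly decreasing along the radial direction (this is recorded just after \eqref{Gla<Gnu}), so it is its own rearrangement, $\G_\omega=\G_\omega^*$. The strategy is then to show that the rearrangement does not increase $Q_\omega$ while preserving the constraint $\|\cdot\|_p^p=C(\omega)$, and to conclude that $\phi_\omega$ must itself be radially symmetric and decreasing. Concretely: by \eqref{equimeas} the $L^2$ and $L^p$ norms of $\phi_\omega^*$ equal those of $\phi_\omega$, and by the P\'olya--Szeg\H{o} inequality \eqref{PS} one has $\|\na\phi_\omega^*\|_2\leq\|\na\phi_\omega\|_2$. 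Writing out $Q_\omega(v)=\|\na\phi_\omega\|_2^2+\omega\|\phi_\omega\|_2^2+(\alpha+\tf{\sqrt\omega}{4\pi})|q|^2$ at $\lambda=\omega$ (using \eqref{eq-Q} and \eqref{Qomega}, noting the $\la(\|\phi_\la\|_2^2-\|v\|_2^2)$ term becomes $\omega\|\phi_\omega\|_2^2+\omega\big(\|\phi_\omega\|_2^2-\|v\|_2^2\big)$ which recombines cleanly), one sees that only $\|\na\phi_\omega\|_2^2$ is affected by the rearrangement and it can only decrease, so $Q_\omega(\widetilde u)\leq Q_\omega(u)$.

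Next I would compare the $L^p$ norms. Since $\phi_\omega,\G_\omega\geq0$, inequality \eqref{ineqf+g} with $f=\G_\omega$, $g=\phi_\omega$ gives $\|u\|_p^p=\|\phi_\omega+q\G_\omega\|_p^p\leq\|\phi_\omega^*+q\G_\omega^*\|_p^p=\|\widetilde u\|_p^p$ (absorbing $q>0$ into the rearrangement by homogeneity). If this inequality were strict, I could rescale $\widetilde u$ by a factor $\beta\in(0,1)$ to restore $\|\beta\widetilde u\|_p^p=C(\omega)$ while strictly lowering $Q_\omega$, exactly as in the final step of the proof of Proposition \ref{realposgs}, contradicting minimality. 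Hence equality must hold in \eqref{ineqf+g}. At this point I invoke the rigidity clause of that inequality: since $q\G_\omega$ is radially symmetric and \emph{strictly} decreasing, equality forces $\phi_\omega=\phi_\omega^*$ a.e., i.e. $\phi_\omega$ is radially symmetric and (non-strictly) decreasing. Consequently $u=\phi_\omega+q\G_\omega$ is a sum of two radially symmetric decreasing functions, hence itself radially symmetric and decreasing along the radial direction.

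The main obstacle I anticipate is the bookkeeping at $\lambda=\omega$: one must verify carefully that, with the decomposition frozen at this value, the quadratic form $Q_\omega$ genuinely isolates $\|\na\phi_\omega\|_2^2$ as the only rearrangement-sensitive term and that the cross terms and the $\|v\|_2^2$ contribution reassemble so that $Q_\omega(\widetilde u)\leq Q_\omega(u)$ holds without hidden sign issues (in particular one needs $\alpha+\tf{\sqrt\omega}{4\pi}>0$, which holds since $\omega>\omega_\alpha$). A secondary subtlety is justifying that $\phi_\omega^*\in H^1(\Rd)$ and that $\widetilde u\in D$ with the correct charge $q$, so that $\widetilde u$ is an admissible competitor; this follows from $\phi_\omega\in H^1(\Rd)$ together with \eqref{PS}, but should be stated explicitly. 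Once equality in \eqref{ineqf+g} is forced, the rigidity statement does the remaining work essentially for free, which is why pinning down the strict monotonicity of $\G_\omega$ is the linchpin of the argument.
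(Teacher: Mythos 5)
Your proposal is correct and follows essentially the same route as the paper: reduce to minimizing $Q_\omega$ on $D^p_{C(\omega)}$ via Proposition \ref{equivprob3}, fix the decomposition at $\la=\omega$, compare $u$ with $\widetilde u=\phi_\omega^*+q\G_\omega$ using \eqref{equimeas}, \eqref{PS} and \eqref{ineqf+g}, and invoke the rigidity clause (strict monotonicity of $\G_\omega$) together with the rescaling trick to force $\phi_\omega=\phi_\omega^*$. The only difference is presentational: the paper argues by contradiction from $\phi_\omega\neq\phi_\omega^*$, while you derive equality in \eqref{ineqf+g} first and then apply rigidity, which is logically equivalent.
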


\begin{proof}
Assume, without loss of generality, that $u$ is a positive minimizer of the $\delta$-NLS action at frequency $\omega$ and fix the decomposition $u=\phi_\omega+q\G_\omega$, with $\la=\omega$. We have to show that $\phi_\omega=\phi_\omega^*$. Suppose by contradiction that $\phi_\omega\neq\phi_\omega^*$ and define the function $\widetilde{u}=\phi_\omega^*+q\G_\omega$. By \eqref{PS} and \eqref{equimeas}, we have that $\|\na\phi_\omega^*\|_2\leq \|\na\phi_\omega\|_2$ and $\|\phi_\omega^*\|_2=\|\phi_\omega\|_2$, so that
\begin{equation*}
Q_\omega(\widetilde{u})\leq Q_\omega(u).
\end{equation*}
Now, applying \eqref{ineqf+g} with $f=q\G_\omega$ and $g=\phi_\omega$ in the strict case, there results that $\|\widetilde{u}\|_p^p>\|u\|_p^p$. Therefore, as $Q_\omega$ is positive, there exists $\beta<1$ such that $\|\beta \widetilde{u}\|_p^p=\|u\|_p^p$ and
\begin{equation*}
Q_{\omega}(\beta\widetilde{u})=\beta^{2}Q_{\omega}(\widetilde{u})<Q_{\omega}(\widetilde{u})\leq Q_{\omega}(u),
\end{equation*}
but, via Proposition \ref{equivprob3} (arguing as in the proof of Proposition \ref{realposgs}), this contradicts that $u$ is a $\delta$-NLS action minimizer, thus concluding the proof.
\end{proof}

Finally, we put together all the information we have obtained so far to prove point (ii) of Theorems \ref{exchar-gs} and \ref{exchar-actmin}. 

\begin{proof}[Proof of Theorems \ref{exchar-gs} and \ref{exchar-actmin}-(ii)]
Let $u$ be a minimizer of the $\delta$-NLS action  at frequency $\omega>{\omega}_{\alpha}$. Then, by Proposition \ref{GSnornos}, Proposition \ref{realposgs}, Corollary \ref{regpos} and Proposition \ref{minsymm}, $u$ satisfies all the properties stated in item (ii).

Let, then, $p\in(2,3)$ and $u$ be a $\delta$-NLS ground state of mass $\mu$. Combining Lemma \ref{chargs} and point (i) of Theorem \ref{exchar-actmin} one sees that $u$ is also a minimizer of the $\delta$-NLS action  at some frequency $\omega>{\omega}_{\alpha}$ (in particular, $\omega=\mu^{-1}(\|u\|_p^p-Q(u))$). Then, one concludes by point (ii) of Theorem \ref{exchar-actmin}.
\end{proof}

%%%%%%%%%%%%%%%%%%%%%%%%%%%%%%%%%%%%%%%%%%%%%%%%%%%%%%%%%%%%%%%%%%%%%%%%%%%%%%%%
%%%%%%%%%%%%%%%%%%%%%%%%%%%%%%%%%%%%%%%%%%%%%%%%%%%%%%%%%%%%%%%%%%%%%%%%%%%%%%%%
%%%%%%%%%%%%%%%%%%%%%%%%%%%%%%%%%%%%%%%%%%%%%%%%%%%%%%%%%%%%%%%%%%%%%%%%%%%%%%%%
%%%%%%%%%%%%%%%%%%%%%%%%%%%%%%%%%%%%%%%%%%%%%%%%%%%%%%%%%%%%%%%%%%%%%%%%%%%%%%%%
%%%%%%%%%%%%%%%%%%%%%%%%%%%%%%%%%%%%%%%%%%%%%%%%%%%%%%%%%%%%%%%%%%%%%%%%%%%%%%%%
%%%%%%%%%%%%%%%%%%%%%%%%%%%%%%%%%%%%%%%%%%%%%%%%%%%%%%%%%%%%%%%%%%%%%%%%%%%%%%%%
%%%%%%%%%%%%%%%%%%%%%%%%%%%%%%%%%%%%%%%%%%%%%%%%%%%%%%%%%%%%%%%%%%%%%%%%%%%%%%%%
%%%%%%%%%%%%%%%%%%%%%%%%%%%%%%%%%%%%%%%%%%%%%%%%%%%%%%%%%%%%%%%%%%%%%%%%%%%%%%%%
%%%%%%%%%%%%%%%%%%%%%%%%%%%%%%%%%%%%%%%%%%%%%%%%%%%%%%%%%%%%%%%%%%%%%%%%%%%%%%%%
%%%%%%%%%%%%%%%%%%%%%%%%%%%%%%%%%%%%%%%%%%%%%%%%%%%%%%%%%%%%%%%%%%%%%%%%%%%%%%%%
%%%%%%%%%%%%%%%%%%%%%%%%%%%%%%%%%%%%%%%%%%%%%%%%%%%%%%%%%%%%%%%%%%%%%%%%%%%%%%%%
%%%%%%%%%%%%%%%%%%%%%%%%%%%%%%%%%%%%%%%%%%%%%%%%%%%%%%%%%%%%%%%%%%%%%%%%%%%%%%%%

%%%%%%%%%%%%%%%%%%%%%%%%%%%%%%%%%%
%%%%%%%%% Bibliografia %%%%%%%%%%%
%%%%%%%%%%%%%%%%%%%%%%%%%%%%%%%%%%
\subsection*{Acknowledgements}

The work has been partially supported by the MIUR project “Dipartimenti di Eccellenza 2018-2022” (CUP E11G18000350001) and by the INdAM GNAMPA project 2020 “Modelli differenziali alle derivate parziali per fenomeni di interazione”.

\end{document}